\newtheorem{theorem}{Theorem}[section]
\newtheorem{lemma}[theorem]{Lemma}
\newtheorem{proposition}[theorem]{Proposition}
\newtheorem{corollary}[theorem]{Corollary}
\newtheorem{problem}{Problem}[section]
\newtheorem{conjecture}{Conjecture}[]
\newtheorem{fact}{Fact}[section]
\newtheorem{note}{Note}[section]
\newtheorem{Lemma}{Lemma}[]
\theoremstyle{definition}
\newtheorem{definition}[theorem]{Definition}
\newtheorem{example}[theorem]{Example}
\theoremstyle{remark}
\newtheorem{remark}[theorem]{Remark}
\numberwithin{equation}{section}
\newif\ifskip
\newcommand{\NN}{\mathbb{N}}
\newcommand{\cC}{\mathcal{C}}
\newcommand{\cD}{\mathcal{D}}
\newcommand{\aA}{\mathcal{A}}
\newcommand{\goesto}{\rightarrow}
\begin{document}

\title{Weakly Distinguishing Graph Polynomials on Addable Properties}

\author{Johann A. Makowsky}
\address{Department of Computer Science, Technion - IIT, Haifa, Israel}
\email{janos@cs.technion.ac.il}

\author{Vsevolod Rakita$^{\star}$}
\address{Department of Mathematics, Technion - IIT, Haifa, Israel}
\email{vsevolod@campus.technion.ac.il}
\thanks{($^{\star}$) Partially supported by a Jacobs Scholarship for excellent students.}

\subjclass[2010]{05, 05C10, 05C30, 05C31, 05C69, 05C80 }

\keywords{Graph polynomials, Random graphs, Addable graph classes, Bollob\'as-Pebody-Riordan Conjecture}

\date{October 12, 2019}


\begin{abstract}
A graph polynomial $P$ is weakly distinguishing if for almost all finite graphs 
$G$ there is a finite graph $H$ that is not isomorphic to $G$ with $P(G)=P(H)$. 
It is weakly distinguishing on a graph property $\mathcal{C}$
if for almost all finite graphs $G\in\mathcal{C}$ there is $H \in \cC$ that is not isomorphic to $G$ with $P(G)=P(H)$.
We give sufficient conditions on a graph property $\mathcal{C}$ for the 
characteristic, clique, independence, matching, and domination  and $\xi$ 
polynomials, as well as the Tutte polynomial and its specialisations, to be weakly distinguishing on $\mathcal{C}$. 
One such condition is to be addable and small in the sense of C. McDiarmid, A. Steger and D. Welsh (2005).
Another one is to be of genus at most $k$.
\end{abstract}

\maketitle

\tableofcontents
\newpage
\section{Introduction and Preliminaries}
We consider only, unless otherwise stated, simple (finite, loopless, undirected graphs with no parallel edges) 
graphs with vertices labeled $1,...,n$. 
A graph property is a family of graphs that is closed under isomorphisms. 
For a graph property $\mathcal{C}$, denote by $\mathcal{C}(n)$ the graphs of order $n$ in $\mathcal{C}$. 
We only consider properties such that $\mathcal{C}(n)$ is non empty for all sufficiently large $n$.

Let $P$ be a graph polynomial. We say that two non isomorphic graphs $G$ and $H$ are 
$P$-mates if $P(G)=P(H)$, and that $G$ is $P$ unique if it has no $P$ mates. 
$P$ is {\em trivial} if all graphs $G,H$ are $P$-mates.
$P$ is {\em complete} if all graphs $G$ are $P$-unique.

In this paper we investigate conditions which imply that almost all graphs in 
a graph property $\mathcal{C}$ have a $P$-mate. More formally, we give the following definitions:

Let $P$ be a graph polynomial, and denote by $\mathcal{G}(n)$ the family of graphs 
of order $n$ with vertices labeled $1,...n$, and by $U_P(n)$ the set of 
$P$ unique graphs of order $n$. 
\begin{definition}
$P$ is {\em weakly distinguishing} if 
$$\lim_{n\goesto \infty} \dfrac{|U_P(n)|}{|\mathcal{G}(n)|}=0$$
and that $P$ is {\em almost complete} if 
$$\lim_{n\goesto \infty} \dfrac{|U_P(n)|}{|\mathcal{G}(n)|}=1$$
\end{definition}

In \cite{bollobas2000contraction},
Bollob\'as, Pebody and Riordan conjectured:

\begin{conjecture}[BPR-conjecture]
\label{BPR}
The  chromatic and Tutte polynomials are almost complete. 
\end{conjecture}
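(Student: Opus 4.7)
The plan is to establish the stronger result for the chromatic polynomial $\chi(G;x)$ first, from which the Tutte case follows: since $\chi$ is a specialisation of $T$, every Tutte-mate of $G$ is in particular a chromatic-mate, so $|U_{\chi}(n)| \le |U_{T}(n)|$ and $\chi$-almost-completeness implies $T$-almost-completeness. It suffices therefore to prove that $|U_\chi(n)|/2^{\binom{n}{2}} \to 1$.

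The first step is to extract as much combinatorial information from $\chi(G;x)$ as possible. By Whitney's broken-circuit theorem, the coefficient of $x^{n-k}$ in $\chi(G;x)$ equals, up to sign, the number of broken-circuit-free $k$-subsets of $E(G)$, so $\chi$ determines $n$, the edge count $m$, the triangle count, and (via M\"obius inversion on small broken-circuit structures) the count of every fixed small induced subgraph pattern. For a uniformly random $G \sim G(n,1/2)$, Azuma's inequality applied to the edge-exposure martingale shows that each such subgraph count is concentrated within $n^{O(1)}$ of its mean with probability $1-o(2^{-\binom{n}{2}})$.

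The core of the argument is to bound $\EE[|\{H \in \mathcal{G}(n) : H \not\isom G,\ \chi(H;x) = \chi(G;x)\}|]$ for $G \sim G(n,1/2)$. One restricts to the typical event that $G$ has trivial automorphism group (Erd\H{o}s--R\'enyi) and that the coefficients of $\chi(G;x)$ fall within their concentration bands, then enumerates candidate mates $H$ constrained by the forced polynomial identities and applies a union bound, or a second-moment/entropy-compression estimate, to show that the total count of such pairs is $o(2^{\binom{n}{2}})$. The asymmetry fact then lets one pass freely between labeled and unlabeled counts.

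The main obstacle --- and the reason this conjecture has remained open since 2000 --- is the information-theoretic gap: $\chi(G;x)$ has only $n+1$ coefficients, whereas $\binom{n}{2}$ bits are needed to specify $G$. Hence coefficient concentration alone cannot suffice, and one must essentially exploit the strong structural constraints on which polynomials arise as chromatic polynomials (integer alternating-sign coefficients, log-concavity of their absolute values by Huh's theorem, bounds in terms of numbers of acyclic orientations, etc.) to derive a useful upper bound on the number of graphs realising a given $\chi$. I expect any prospective proof to succeed or stall precisely at this translation from algebraic/geometric restrictions on chromatic polynomials to combinatorial counting of their pre-images.
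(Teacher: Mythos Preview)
There is no proof to compare against: the statement you are addressing is a \emph{conjecture}, stated as such in the paper and attributed to Bollob\'as, Pebody and Riordan. The paper does not prove it, and indeed the paper's own contributions point in a rather different direction --- showing that the Tutte polynomial (via the $\xi$-polynomial) is \emph{weakly distinguishing} on various small addable classes, which neither supports nor refutes almost-completeness on all graphs.

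Your proposal is not a proof but a programme, and you yourself identify precisely where it stalls: the information-theoretic gap between the $n+1$ coefficients of $\chi(G;x)$ and the $\binom{n}{2}$ bits needed to specify $G$. Concentration of the coefficients, subgraph counts, Huh's log-concavity, and acyclic-orientation bounds are all genuine constraints, but no one has shown that together they force the typical fibre $\{H:\chi(H;x)=\chi(G;x)\}$ to be small. The union-bound/second-moment step you describe is exactly the step that has resisted attack since 2000; writing it down as a plan does not constitute progress on it. Your reduction from the chromatic case to the Tutte case is correct in spirit (on graphs of fixed order, $T$-mates are $\chi$-mates, so $U_\chi(n)\subseteq U_T(n)$), but that only transfers a result you do not have.

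In short: the statement is open, the paper treats it as open, and your outline correctly locates --- but does not close --- the known gap.
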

In \cite{makowsky2017p}, the analogous question 
for $r$-regular hypergraphs 
was considered, and for $r \geq 3$ the conjecture was refuted.
In \cite{noy2003graphs} it was observed, as a remark in the conclusions, that
the independence polynomial $In(G;x)$, discussed in Section \ref{se:applications-independence}, is weakly distinguishing
on all finite graphs.
In \cite{rakita2019weakly}, it was proven that an infinite number of graph polynomials, 
among them the independence, clique and harmonious polynomials, are weakly distinguishing.

A natural way to approach the question whether a graph polynomial $P$ is weakly distinguishing, 
almost complete or otherwise, is to ask, given a graph property $\mathcal{C}$, 
whether almost all graphs in $\cC$ are $P$ unique in $\cC$.
Let $\mathcal{C}$ be a graph property and
denote by $U_{P,\mathcal{C}}(n)=\{G\in \mathcal{C}(n):G$ has no $P$-mate in $\cC \}$. 
\begin{definition}
$P$ is {\em weakly distinguishing on $\mathcal{C}$} if 
$$
\lim_{n\goesto \infty} \dfrac{|U_{P,\mathcal{C}}(n)|}{|\mathcal{C}(n)|}=0,
$$
and that $P$ is {\em almost complete on $\mathcal{C}$} if 
$$
\lim_{n\goesto \infty} \dfrac{|U_{P,\mathcal{C}}(n)|}{|\mathcal{C}(n)|}=1.
$$
\end{definition}

In this paper we prove that  many well studied graph polynomials
are weakly distinguishing in infinitely many graph properties $\cC$, Example \ref{lis}.
However, all these properties $\cC$ are small (in the sense that they are sets of measure $0$ 
in the collection of all graphs), 
so these results do not imply that any of the above polynomials are weakly distinguishing or 
almost complete for arbitrary properties $\cC$.

The graph polynomials for which we show that they are weakly distinguishing for $\cC$ 
include the characteristic polynomial, the  domination polynomial,
and the $\xi$-polynomial, which is a generalization of the both the matching and the Tutte polynomial.
These three polynomials are mutually incomparable in distinctive power by Proposition \ref{prop:incomparable}.
Once we know that $\xi$ is weakly distinguishing for a graph property $\cC$,
the holds also for all the graph polynomials which are substitution instances of $\xi$ in $\cC$.
This includes the matching polynomials, the independence polynomial, the chromatic polynomial, 
the Tutte polynomial, the Euler polynomial,
and many others, see Section \ref{se:applications}.

\ifskip\else
\begin{note} 
When considering whether a graph $G$ is Tutte unique, 
we say that $H$ is a Tutte mate of $G$ only if $H$ has no isolated vertices, 
since the disjoint union of any graph $G$ with $K_1$ has the same Tutte polynomial as $G$.
\end{note}
\fi 

This paper is organized as follows: 
In Section \ref{se:dp} we introduce a method to compare distinctive power of graph polynomials and 
formulate how to use it (Lemma \ref{le:dp}.
In Section \ref{se:addable}, we give a graph theoretic background on addable properties, and graphs of genus at most $k$, 
and formulate our main tools, Lemma \ref{mcd2a} and Lemma \ref{mcd4a}.
In Sections \ref{se:char} and \ref{se:dom} we prove our results: 
Theorems \ref{thm:charMainTheorem},\ref{thm:charWeaklyOnGenus} for the characteristic polynomial,
Theorems \ref{thm:mainDomTheorem},\ref{thm:DomGenus} for the domination polynomial. 
In Section \ref{se:xi} we prove the same for the $\xi$-polynomial
and in Section we apply Lemma \ref{le:dp} to derive the corresponding results
for the matching polynomials, the independence polynomial, the chromatic polynomial, 
the Tutte polynomial, the Euler polynomial, and many others.

\ifskip\else
theorems \ref{thm:mainMatchingTheorem},\ref{thm:MatchingGenus} for the Matching polynomial, 
theorems \ref{thm:TutteWeaklyDistinguishingOnAdd},\ref{thm:TutteWeaklyDistinguishingOnGenus} 
for the Tutte polynomial and its specializations, 
theorems \ref{thm:mainXiTheorem},\ref{thm:XiGenus} for the $\xi$ polynomial. 
In all these the analogue of the BPR conjecture is wide open. 
In section 8 we discuss the clique polynomial, 
which has been proven to be weakly distinguishing on all graphs. 
\fi 

In Section \ref{se:conclu} we draw conclusions and present open problems.

\subsubsection*{Acknowledgements}
Preliminary versions of our results were presented at the 
19th International Conference on Random Structures and Algorithms,
15 to 19 July 2019, Z\"urich, Switzerland.
The results of Sections \ref{se:xi} and \ref{se:applications}
were partially obtained during Seminar 19401 in Dagstuhl (Comparative Theory for Graph Polynomials)
in October 2019. We want to thank the participants of the Dagstuhl Seminar
Delia Garijo,
Anna de Mier, Marc Noy, Elena Ravve and Peter Tittmann
for
valuable discussions and comments. Thanks go especially  to
Peter Tittmann for his guidance and comments about invariants derivable from $\xi$-polynomial, which helped us 
formulating Section \ref{se:applications}. 

\section{Comparing graph polynomials}
\label{se:dp}
In this section we provide a tool (Lemma \ref{obs:wd})
which allows us to show that many graph polynomials are weakly distinguishing.

\begin{definition}
Let $\cC$ be a graph property, $P$ and $Q$ be two graph polynomials and $G$ and $H$ two finite graphs.
\begin{enumerate}[(i)]
\item
$G$ and $H$ are {\em similar} if they have the same number of vertices, edges and connected components.
\item
$P <_{d.p}^{\cC} Q$ or
{\em  $Q$ is at least as distinctive as $P$ in $\cC$}
if
for all graphs $G,H \in \cC$, $Q(G;\bar{x})=Q(H;\bar{x})$ implies $P(G;\bar{y})=P(H;\bar{y})$.
\item
$P \sim_{d.p}^{\cC} Q$ or
{\em $P$ and $Q$ are of the same distinctive power in $\cC$} if
$P <_{d.p}^{\cC} Q$ and $Q <_{d.p}^{\cC} P$. 
\item
$P <_{s.d.p}^{\cC} Q$ or
{\em $P$ and $Q$ are of the same distinctive power in $\cC$ on similar graphs} if
if
for all similar graphs $G,H \in \cC$, $Q(G;\bar{x})=Q(H;\bar{x})$ implies $Q(G;\bar{y})=Q(H;\bar{y})$.
\item
$P \sim_{s.d.p}^{\cC} Q$  if
$P <_{s.d.p}^{\cC} Q$ and $Q <_{s.d.p}^{\cC} P$. 
\item
For all graph properties $\cC$
$P <_{d.p}^{\cC} Q$  implies $P <_{s.d.p}^{\cC} Q$.
\end{enumerate}
If $\cC$ consists of all finite graphs, we omit it.
\end{definition}

The partial preorders $<_{d.p.}$ and $<_{s.d.p.}$ between graph polynomials (or general graph invariants)
are studied extensively in
\cite{makowsky2019logician}.
A complete (trivial) graph polynomial is a maximal (minimal) element with respect to $<_{d.p.}$

\begin{example}
\label{ex:dp}
\begin{enumerate}[(i)]
\item
The chromatic polynomial $\chi(G;x)$ and the Tutte polynomial satisfy
$\chi(G;x) <_{s.d.p.} T(G;x,y)$ but not
$\chi(G;x) <_{d.p.} T(G;x,y)$, see Section \ref{se:applications}, because $T(;x,y)$ does not determine
the order of $G$ in the presence of isolated vertices.
\item
The characteristic polynomial $P_A(G;x)$ and the (defect aka acyclic) matching polynomial $\mu(G;x)$ 
from Section \ref{se:applications-matching}
are $d.p.$-equivalent on forests, see \cite{godsil1981theory}.
\item
Let $\bar{G}$ be the (loopless) complement graph of a simple graph $G$,
and $P(G; \bar{x})$ be a (possibly multivariate) graph polynomial. 
Put $\bar{P}(G; \bar{x}) = P(\bar{G}, \bar{x})$.
Then
$\bar{P}(G; \bar{x}) \sim_{d.p.} P(G, \bar{x})$.
\\
If we relativize this to a graph property $\cC$
$\bar{P}(G; \bar{x}) \sim_{d.p.}^{\cC} P(G, \bar{x})$ holds provided $\cC$ is closed under complement graphs.
\end{enumerate}
\end{example}

\begin{proposition}
\label{prop:incomparable}
The following graph polynomials are pairwise incomparable by $<_{s.d.p}$:
\begin{enumerate}[(i)]
\item 
The chromatic polynomial $\chi(G;x)$ and the independence polynomial $In(G;x)$ from Section \ref{se:applications}.
\item 
The characteristic polynomial $P_A(G,x)$ 
from Sections \ref{se:char}
and the chromatic polynomial $\chi(G;x)$.
\item 
The characteristic polynomial $P_A(G;x)$ and the independence polynomial $In(G;x)$.
\item 
The characteristic polynomial $P_A(G,x)$ and the domination polynomial $Dom(G;x)$.
from Sections \ref{se:char} and \ref{se:dom}.
\item 
The characteristic polynomial and the $\xi$-polynomial from Section \ref{se:xi}.
\item 
The domination polynomial and the $\xi$-polynomial.
\end{enumerate}
\end{proposition}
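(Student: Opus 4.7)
The plan is to prove each of the six incomparabilities separately, and for each pair $(P,Q)$ to exhibit two pairs of non-isomorphic similar graphs: $(G_1,H_1)$ on which $P$ agrees while $Q$ disagrees, and $(G_2,H_2)$ on which the roles are exchanged. Since similarity only constrains order, edge count, and number of connected components, there is substantial freedom in the constructions, but each small-graph witness must be checked against all three similarity constraints.

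For the two incomparabilities involving the chromatic polynomial, namely (i) and (ii), one direction is handled uniformly by trees. Any two non-isomorphic trees on the same $n$ vertices share the chromatic polynomial $x(x-1)^{n-1}$ and are automatically similar, being connected with $n-1$ edges; already $P_4$ and $K_{1,3}$ have distinct independence and characteristic polynomials, so they furnish the $\chi$-agreeing direction for (i) and (ii). For the reverse direction in (ii) I would use a cospectral pair of similar graphs with distinct chromatic polynomials, obtained either by Godsil--McKay switching or by combining classical small cospectral mates in a way that preserves the number of components. For (i) in the reverse direction, similar graphs with equal $In$ but differing $\chi$ are produced by taking small $In$-equivalent pieces and exploiting the multiplicativity $In(G \sqcup H) = In(G)\cdot In(H)$ to build larger similar pairs.

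For the remaining pairs (iii)--(vi), all involving the characteristic polynomial against $In$, $Dom$, or $\xi$, the natural starting point in one direction is Schwenk's theorem, which supplies cospectral non-isomorphic trees on every sufficiently large number of vertices (at least $8$); such trees are similar, and one verifies that generic Schwenk pairs differ in $In$, $Dom$, and $\xi$ respectively. In the opposite direction, for (iii)--(v), one builds similar graphs agreeing in $In$, $Dom$, or $\xi$ but having distinct spectra, again using multiplicativity of these invariants under disjoint unions to assemble small equivalent pieces into larger pairs of matching order, edge count, and component count. For (vi), since $\xi$ encodes matching and Tutte data but not domination data, and $Dom$ encodes neither, one starts from similar graphs with equal matching/Tutte behavior but different domination number for one direction, and from similar graphs with equal domination polynomial but differing matching behavior for the other.

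The main obstacle is not conceptual but combinatorial bookkeeping: twelve witness pairs must be explicitly identified and verified, and particular care must be taken with the similarity requirement, which disqualifies several of the most familiar cospectral examples such as $K_{1,4}$ versus $C_4 \sqcup K_1$ that differ in component count. Working one step beyond the minimal examples --- trees on at least $8$ vertices for Schwenk pairs, and graphs on at least $6$ vertices for connected cospectral mates --- uniformly circumvents this component-count obstruction, at the cost of slightly larger but still explicit constructions.
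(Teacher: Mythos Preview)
Your approach differs substantially from the paper's. The paper handles (i)--(iii) by citation to \cite{makowsky2019logician}, and for (iv)--(vi) it exhibits three concrete pairs: (A) $S_5=K_{1,4}$ versus $C_4\sqcup K_1$, cospectral but with different $Dom$ and $\xi$; (B) $P_5$ versus $\hat P_5$, $Dom$-equal but with different $P_A$ and $\xi$; and (C) two order-$10$ graphs taken from \cite{trinks2011covered} that are $\xi$-mates but differ in both $Dom$ and $P_A$. These three pairs between them cover the six directions needed for (iv)--(vi). Your more structural route via Schwenk trees and multiplicativity under disjoint union is a legitimate alternative for several of the directions, and you are in fact more scrupulous about the similarity constraint than the paper is --- you explicitly disqualify the pair $K_{1,4}$ versus $C_4\sqcup K_1$ on component-count grounds, which is precisely the paper's own witness (A).

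There is, however, a genuine gap in your plan for (v) and (vi). Both require a pair of similar graphs with \emph{equal $\xi$} but differing $P_A$ (respectively $Dom$). You propose to start from graphs ``with equal matching/Tutte behavior,'' but $\xi$ is strictly more distinctive than the matching and Tutte polynomials together; equal matching and Tutte do not force equal $\xi$. Moreover, all graphs on fewer than $8$ vertices and all trees on fewer than $10$ are $\xi$-unique, so $\xi$-mates cannot be assembled from small pieces by the disjoint-union multiplicativity trick you invoke elsewhere (multiplicativity only propagates $\xi$-equality, it does not create it). The paper closes this by importing an explicit order-$10$ $\xi$-mate pair from \cite{trinks2011covered}; without such a concrete pair, your plan for these two items does not go through.
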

\begin{proof}
The first three examples are from \cite{makowsky2019logician}.

For (iv)-(vi) we first note that
all graphs of order less than $8$ 
and all trees of order less than $10$
are $\xi$-unique,
see \cite{trinks2011covered}.

(A):
Consider the graphs $C_4 \sqcup K_1$ and $S_5$ in Figure \ref{fig:cospectral3}.
We have\\
$P_A(S_5,x) = P_A(C_4 \sqcup K_1, x) = x^2(x+2)(x-2)$.
\\
$Dom(S_5, 1) =1$, $Dom(C_4 \sqcup K_1, 1)=0$,
and \\
$\xi(S_5, x) \neq \xi(C_4 \sqcup K_1, x)$, since they are of order $5$. 

\begin{figure}[h!]
        \caption{$P_A$-mates}
        \centering
        \includegraphics[scale=0.5]{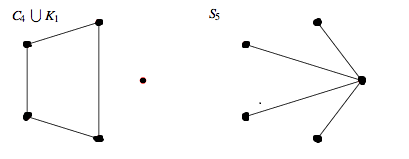}
        \label{fig:cospectral3}
\end{figure}

(B):
Let $P_5$ and $\hat{P_5}$ be the graphs of order $5$ in Figure \ref{fig:p5}.
used in Corollary \ref{cor:domMateP5}.
\\
$Dom(P_5,x) = Dom(\hat{P_5},x)$, since every dominating set of $\hat{P_5}$ is also a dominating set of $P_5$.
\\
$P_A(P_5,x)=-x(x-1)(x+1)(x^2-3) \neq P_A(\hat(P_5),x)=-x(x^2-x-3)(x^2+x-1)$,
hence
$P_A(P_5,x) \neq P_A(\hat{P_5},x)$.
\\
$\xi(P_5,x) \neq \xi(\hat{P_5},x)$, 
since both $P_5$ and $\hat{P_5}$ are $\xi$-unique.

\begin{figure}[h!]
        \caption{$\xi$-mates}
        \centering
        \includegraphics[scale=0.5]{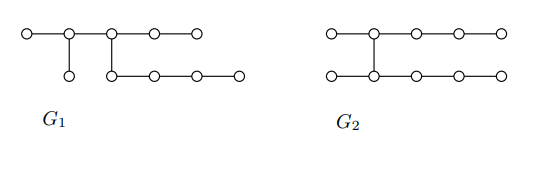}
        \label{fig:coXi}
\end{figure}
(C):
The graphs $G_1$ and $G_2$ Figure \ref{fig:coXi} are of order $10$.
Therefore
we have $\xi(G_1,x) = \xi(G_2,x)$.
\\
However,
$Dom(G_1;x)=x^{10}+10x^9+40x^8+82x^7+92x^6+56x^5+16x^4$
and
$Dom(G_2;x)=x^{10}+10x^9+41x^8+86x^7+94x^6+48x^5+9x^4$, hence
$Dom(G_1,x) \neq Dom(G_2,x)$ and
\\
$P_A(G_1;x)=x^2(x^4-x^3-4x^2+2x+3)(x^4+x^3-4x^2-2x+3)$
and
$P_A(G_2;x)=x^2(x-1)(x+1)(x^2-2)(x^4-5x^2+3)$, hence
$P_A(G_1,x) \neq P_A(G_2,x)$.

Now (A) and (B) proves (iv), (A) and (C) proves (v), and (B) and (C) proves (vi).
\end{proof}

In Section \ref{se:applications}
we use the following observation:
\begin{Lemma}
\label{obs:wd}
\label{le:dp}
Let $P(G; \bar{x})$
and $Q(G; \bar{y})$ two graph polynomials and $\cC$ and $\cD$ be graph properties with $\cD \subseteq \cC$.
Assume that 
$P <_{s.d.p}^{\cC} Q$ 
and $Q$ is weakly distinguishing in $\cC$. 
Then also $P$ 
is weakly distinguishing in $\cC$ but not necessarily in $\cD$.
\end{Lemma}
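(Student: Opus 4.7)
The plan is to reduce the first claim to the hypothesis on $Q$ by proving the set inclusion $U_{P,\cC}(n)\subseteq U_{Q,\cC}(n)$, and then to refute the second claim with a trivial subfamily. For the positive direction, I would fix $G\in\cC(n)$ and assume it is not $Q$-unique in $\cC$, so that there is $H\in\cC$ with $G\not\isom H$ and $Q(G;\bar x)=Q(H;\bar x)$. The next step is to observe that every graph polynomial $Q$ of interest in the paper determines the numbers of vertices, edges and connected components of its input, so from $Q(G;\bar x)=Q(H;\bar x)$ one concludes that $G$ and $H$ are similar in the sense of the earlier definition. The hypothesis $P<_{s.d.p}^{\cC}Q$ now applies and yields $P(G;\bar y)=P(H;\bar y)$, exhibiting $H$ as a $P$-mate of $G$ inside $\cC$. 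Contrapositively $U_{P,\cC}(n)\subseteq U_{Q,\cC}(n)$, and dividing through by $|\cC(n)|$ gives $|U_{P,\cC}(n)|/|\cC(n)|\le|U_{Q,\cC}(n)|/|\cC(n)|\to 0$, which is the first assertion.

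For the ``not necessarily'' clause, the plan is to exhibit an obvious counterexample. Take $\cC$ to be all finite simple graphs and let $\cD=\{K_n:n\ge 1\}\subseteq\cC$. Then $\cD(n)$ is a singleton for every $n$, so its unique element has no $P$-mate in $\cD$ whatsoever, and $|U_{P,\cD}(n)|/|\cD(n)|=1$ for all $n$, irrespective of the polynomial $P$. Hence weak distinguishability on $\cC$ cannot be transferred automatically to an arbitrary subproperty.

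The main subtlety lies in the first paragraph, at the step from ``$Q$-mate'' to ``\emph{similar} $Q$-mate'': the definition of $<_{s.d.p}^{\cC}$ only compares pairs of similar graphs, so without $Q$ recording enough size information the inclusion $U_{P,\cC}(n)\subseteq U_{Q,\cC}(n)$ could fail in principle. In the applications targeted in Section~\ref{se:applications}, this obstacle evaporates because each candidate polynomial for $Q$ (characteristic, domination, $\xi$, Tutte, etc.) plainly encodes the vertex, edge and component counts of its argument; I would either state this as a standing assumption on $Q$ or restrict attention to similar $Q$-mates throughout the proof.
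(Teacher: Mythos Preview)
Your approach is the same as the paper's: prove the inclusion $U_{P,\cC}(n)\subseteq U_{Q,\cC}(n)$ and conclude by squeezing. The paper's own proof is a two-line sketch that simply asserts this inclusion (``Clearly, $P <_{s.d.p}^{\cC} Q$ implies that $U_P(n) \subseteq U_Q(n)$'') without ever addressing the similarity constraint you isolate; your discussion of why a $Q$-mate may be taken to be similar is therefore more careful than the original and correctly surfaces a hypothesis the paper leaves implicit (in the applications the $Q$-mates produced via Lemmas~\ref{mcd2a} and~\ref{mcd4a} are always similar to $G$, so the issue is harmless there). The paper also gives no counterexample for the ``not necessarily in $\cD$'' clause, so your $\cD=\{K_n:n\ge1\}$ example is an addition. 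One small correction: the phrase ``irrespective of the polynomial $P$'' overshoots, since for a trivial $P$ every $K_m$ is a $P$-mate of $K_n$ in $\cD$; it suffices that $P$ distinguishes graphs of different orders, which any of the concrete polynomials treated in the paper does.
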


\begin{proof}
Clearly, $P <_{s.d.p}^{\cC} Q$ implies that
$U_P(n) \subseteq U_Q(n)$.
Hence
$$
\lim_{n\goesto \infty} \dfrac{|U_Q(n)|}{|\mathcal{G}(n)|}
\leq
\lim_{n\goesto \infty} \dfrac{|U_P(n)|}{|\mathcal{G}(n)|}.
$$
\end{proof}

\section{Addable Properties and graphs of genus $k$}
\label{se:addable}
We discuss properties on which the Tutte, Domination, Matching, $\xi$, Clique and Characteristic polynomials are 
weakly distinguishing. These properties were discussed in
\cite{mcdiarmid2005random} and \cite{mcdiarmid2008random}.

\subsection{Small addable classes}
\begin{definition}
We say a graph property $\mathcal{A}$ is decomposable if a graph is in $\mathcal{A}$ if and only if each of its connected components are in $\mathcal{A}$.\\
 We say a graph property $\mathcal{A}$ is bridge addable if for each graph $G\in \mathcal{A}$ and every two vertices $u,v$ in different components of $G$ the graph obtained by adding an edge between $u$ and $v$ is also in $\mathcal{A}$.\\
 We say a graph property $\mathcal{A}$ is addable if it is decomposable and bridge addable.
\end{definition}
\begin{example}
\label{lis}
The following properties are easily seen to be addable:
\begin{enumerate}[(i)]
\item Planar graphs;
\item Outerplanar graphs;
\item Series Parallel graphs;
\item Graphs with tree-width at most $k$ for $k \geq 2$;
\item $k$-colorable graphs for $k \geq 2$;
\item Graphs with no cycles of length greater than $k$;
\item Graphs with no $K_k$ minor for $k \geq 2$.
\end{enumerate}
\end{example}

Let $\aA$ be a graph property. If $\aA$ is minor closed (that is, closed under deletion of vertices and edges, and under contraction of edges) the Graph Minor theorem says that $\aA$ is characterized by a finite set of forbidden minors (see \cite{lovasz2006graph} for more on graph minors and the graph minor theorem). We can characterize addable  minor closed graph properties in terms of their forbidden minors:
\begin{proposition}[\cite{mcdiarmid2009random}]
Let $\mathcal{A}$ be a minor closed graph property. Then $\mathcal{A}$ is addable if and only if each excluded minor of $\mathcal{A}$ is 2-connected.
\end{proposition}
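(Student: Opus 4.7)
The plan is to prove both implications separately, exploiting the match between the two operations in the definition of addability (disjoint union and bridge-addition) and the two ways a graph can fail to be 2-connected (being disconnected, or having a cut vertex).

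For the forward direction I argue contrapositively. Let $M$ be an excluded minor which is not 2-connected. If $M$ is disconnected, write $M = M_1 \sqcup M_2$; each $M_i$ is a proper minor of $M$ and hence lies in $\mathcal{A}$, so decomposability yields $M \in \mathcal{A}$, a contradiction. If $M$ is connected but has a cut vertex $v$, decompose $M$ along $v$ into two proper subgraphs $M_1, M_2$ with $V(M_1) \cap V(M_2) = \{v\}$; each is a proper minor of $M$ and hence in $\mathcal{A}$. Decomposability puts their disjoint union (on distinct copies of $v$) into $\mathcal{A}$; bridge-addability then puts into $\mathcal{A}$ the graph obtained by joining the two copies of $v$ by a new edge; and contracting that edge recovers $M$, so minor-closure forces $M \in \mathcal{A}$, contradiction.

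For the converse, assume every excluded minor of $\mathcal{A}$ is 2-connected. Decomposability is almost immediate: any excluded minor inside $G$ is connected, hence sits inside a single connected component of $G$, so if every component lies in $\mathcal{A}$ then $G$ contains no excluded minor. For bridge-addability, let $G \in \mathcal{A}$, let $u, v$ lie in different components of $G$, set $G' = G + uv$, and suppose that $G'$ contains an excluded minor $M$, necessarily 2-connected. Fix a minor model of $M$ in $G'$ with branch sets $V_1, \ldots, V_k$. The single new edge $uv$ is either unused by the model, internal to some branch set $V_i$, or the representative of some minor-edge $ij$. If $uv$ is unused, $M$ is already a minor of $G$. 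If $uv$ represents $ij$, then every remaining representative lies in $G$ and hence only joins branch sets within a single component of $G$; since $V_i$ and $V_j$ sit in different components, no edge of $M - ij$ crosses between them, so $M - ij$ is disconnected and $ij$ is a bridge of $M$. If $uv$ is internal to $V_i$, then in $G$ the set $V_i$ splits into two connected pieces $V_i^u, V_i^v$ lying in different components; the induced model in $G$ realizes the vertex-split $M^*$ of $M$ at $i$, and reidentifying the two copies of $i$ across distinct components of $M^*$ forces either a cut vertex in $M$ or $M$ itself into $\mathcal{A}$. All three outcomes contradict $M$ being a 2-connected excluded minor. This bridge-addability case analysis, and in particular the bookkeeping that shows how the fact that $u$ and $v$ live in distinct components of $G$ propagates through the minor model to produce a bridge or a cut vertex in $M$, is where I expect the argument to need the most care; the other pieces are essentially formal once minor models are set up correctly.
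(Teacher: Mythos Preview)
The paper does not supply its own proof of this proposition; it is quoted from \cite{mcdiarmid2009random} and immediately used as a black box, so there is nothing in the paper to compare your argument against.

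That said, your proof plan is correct and is essentially the standard argument. The forward direction is clean: splitting an excluded minor $M$ at a separating vertex, taking the disjoint union, adding a bridge between the two copies of the cut vertex, and contracting that bridge recovers $M$ inside $\mathcal{A}$, exactly as you describe. For the converse, the three-way split on where the new edge $uv$ sits relative to a fixed minor model of $M$ in $G'$ is the right structure, and the key observation --- that the component partition of $G$ pulls back along the model to force either a bridge (case~(b)) or a cut vertex (case~(c)) in $M$ --- is exactly the point. When you write it out in full, the only place that deserves an extra line is case~(c): make explicit the sub-case in which one of the two halves $V_i^u$, $V_i^v$ of the split branch set carries no edge of the model to the other branch sets, since then the other half alone already serves as a branch set for $i$ inside $G$, and this is precisely the ``$M$ itself into $\mathcal{A}$'' alternative you allude to. Without isolating that sub-case the cut-vertex conclusion is not automatic.
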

Note that any non empty, minor closed addable graph property $\aA$ contains all forests, as it contains the graph with a single vertex, and is closed under taking disjoint unions of graphs in $\aA$ and under adding edges between different components.

In addition to being addable, we will require the properties we consider to be small:
\begin{definition}
We say that a graph property $\aA$ is small if there exists a constant $a>0$ such that $|\aA_n|\leq a^nn!$ for all sufficiently large $n$.
\end{definition}
The following result of Norine et. al. is convenient:
\begin{theorem}[\cite{norine2006proper}]
\label{NSTW}
Let $\cC$ be a proper minor closed graph property. Then $\cC$ is small.
\end{theorem}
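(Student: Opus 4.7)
The plan is to combine three ingredients from the theory of graph minors: the Robertson--Seymour graph minor theorem, Thomason's extremal bound on $K_t$-minor-free graphs, and (most delicately) the Robertson--Seymour structure theorem for $H$-minor-free graphs.

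First, I would reduce to the case of excluding a complete graph. Since $\cC$ is a proper minor-closed property, the graph minor theorem supplies a finite family of forbidden minors, at least one of which, say $H$, is nonempty. Because every graph $H$ is itself a minor of $K_{|V(H)|}$, every graph in $\cC$ is $K_t$-minor-free for $t=|V(H)|$. It therefore suffices to prove the bound $|\cC(n)| \leq a^n n!$ for the class of all $K_t$-minor-free graphs, for a constant $a$ depending only on $t$.

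Next I would record the sparsity consequence: by Thomason's theorem (refining earlier bounds of Mader and Kostochka), there is a constant $\alpha_t$ such that every $K_t$-minor-free graph on $n$ vertices has at most $\alpha_t n$ edges. Sparsity by itself is, however, not enough: counting all labeled graphs on $n$ vertices with at most $\alpha_t n$ edges gives only
$$
\binom{\binom{n}{2}}{\alpha_t n} \;=\; e^{O(n \log n)},
$$
which is of the same order as $n!$ but with an exponent whose constant need not satisfy $c \leq 1$, and so is too crude to beat $a^n n!$ for any single constant $a$.

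The main obstacle is to upgrade this counting to a genuine $a^n n!$ bound, and this requires structural information beyond the edge count. Here I would invoke the Robertson--Seymour structure theorem: every $K_t$-minor-free graph admits a tree decomposition of bounded adhesion whose torsos are almost embeddable in a surface of bounded genus, up to a bounded number of apex vertices and vortices. A graph in $\cC(n)$ is then encoded by (i) the combinatorial shape of its tree decomposition, (ii) the bounded almost-embedding data of each torso, and (iii) the distribution of the $n$ labels among the torsos. The counts of (i) and (ii) together contribute at most $c^n$ configurations, while (iii) contributes a factor of $n!$, which yields the desired estimate. Making (i)--(iii) precise in such a way that each graph is counted only a bounded number of times, despite generally having many valid structural decompositions, is the subtle content of the cited work of Norine, Seymour, Thomas and Wollan, and this bookkeeping is where I would expect the proof to be genuinely hard.
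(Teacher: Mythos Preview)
The paper does not itself prove this statement; Theorem~\ref{NSTW} is quoted from \cite{norine2006proper} and used as a black box, so there is no in-paper argument to compare your sketch against.

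On the sketch itself: your first two paragraphs are fine. The reduction to $K_t$-minor-free graphs is standard, and you correctly observe that the Mader/Kostochka/Thomason edge bound alone only yields a crude $n^{O(n)}$ count with an uncontrolled constant in the exponent. Your third paragraph, however, misdescribes the cited proof. The argument of Norine, Seymour, Thomas and Wollan does \emph{not} pass through the Robertson--Seymour structure theorem; their paper is short (a handful of pages) and elementary, using only Mader's density theorem together with a direct inductive counting argument. No surfaces, apices, vortices, or tree decompositions appear.

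The structure-theorem route you outline is a conceivable alternative, but it is far heavier than necessary and, as written, hides a near-circular dependency: your step~(ii) presumes that almost-embeddable graphs in a fixed surface already form a small class, which is an enumerative statement of essentially the same type as the theorem you are trying to prove. One can supply that ingredient independently (e.g.\ via classical map enumeration for bounded-genus surfaces, plus additional bookkeeping for the apex vertices and vortices), but none of this is the content of \cite{norine2006proper}.
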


Let $H$ be a graph on vertex set $\{1,...,h\}$ and let $G$ be a graph on the vertex set $\{1,...,n\}$ where $n>h$. Let $W\subseteq V(G)$ with $|W|=h$, and let the root $r_W$ denote the least element in $W$. We say that $W$ is a pendant appearance of $H$ in $G$ if (a) the increasing bijection from $\{1,...,h\}$ to $W$ gives an isomorphism between $H$ and and the induced subgraph $G[W]$ of $G$, and (b) there exists exactly one edge between $W$ and $V(G)-W$, and this edge is incident with the root $r_W$.
Our method for proving graph polynomials are weakly distinguishing will relay on the following theorem:
\begin{theorem}[McDiarmid,Steger,Welsh \cite{mcdiarmid2005random}]
\label{mcd}
Let $\mathcal{C}$ be a non-empty, addable, and small graph property, let $H\in \mathcal{C}$ be a connected graph, and let $R_n$ be a random graph selected uniformly at random from the graphs of order $n$ in $\mathcal{C}$. Denote by $f_H(R_n)$ the number of pendant appearances of $H$ in $R_n$. Then there are constants $\alpha>0$,$n_0\in \mathbb{N}$ such that for all $n>n_0$,
$$
\mathbb{P}[f_H(R_n)\leq \alpha n]<e^{-\alpha n}
$$
\end{theorem}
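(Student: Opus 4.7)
Write $h=|V(H)|$. For each $h$-subset $W\subset[n]$, I construct many labeled graphs in $\mathcal{C}(n)$ having $W$ as a pendant appearance of $H$ as follows: take any $G'\in\mathcal{C}$ on $[n]\setminus W$, place the canonical copy of $H$ on $W$ via the increasing bijection, and add a single bridge from the root $r_W$ to an arbitrary $v\in[n]\setminus W$. Decomposability places $G'\sqcup H$ in $\mathcal{C}$ and bridge-addability keeps the graph there once the bridge is added; distinct pairs $(G',v)$ give distinct graphs, so the count is at least $(n-h)\,|\mathcal{C}(n-h)|$. Smallness $|\mathcal{C}(n)|\leq a^n n!$, combined with the supermultiplicative bound $|\mathcal{C}(a+b)|/(a+b)!\geq |\mathcal{C}(a)|\,|\mathcal{C}(b)|/(a!\,b!)$ arising from decomposability via disjoint unions, forces the growth constant $\gamma:=\lim_n(|\mathcal{C}(n)|/n!)^{1/n}$ to exist and to be finite. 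From this one extracts $(n-h)\,|\mathcal{C}(n-h)|/|\mathcal{C}(n)|\geq c_H/n^{h-1}$ for $n$ large, and summing the resulting lower bound on $\mathbb{P}[W\text{ is a pendant appearance}]$ over all $\binom{n}{h}$ subsets $W$ gives $\mathbb{E}[f_H(R_n)]\geq \beta n$ for some $\beta>0$.

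\textbf{Step 2: exponential concentration via switching.} The indicators $\mathbf{1}[W\text{ is pendant}]$ are correlated, so a bare Chernoff bound is unavailable. The plan is a switching argument. Partition $[n]$ into $m=\lfloor n/(h+1)\rfloor$ disjoint blocks of size $h+1$; for any $G\in\mathcal{C}(n)$ with $f_H(G)\leq \alpha n$, a strictly positive proportion of blocks $B$ contain no pendant appearance of $H$. For each such \emph{bad} block $B$, rewire $G$ into a graph $\Phi_B(G)$ with $B$ as a pendant appearance: first add a bridge from a designated vertex of $B$ to a canonical outside vertex (bridge-addability preserves $\mathcal{C}$), then delete the old edges incident to $B$ one at a time in a canonical order while inserting auxiliary bridges as needed to keep every intermediate object in $\mathcal{C}$, and finally overwrite $G[B]$ by the canonical labeled copy of $H$. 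The preimages of the iterated switch $\Phi$ are encoded by the list of erased edges, and the smallness bound $|\mathcal{C}(n)|\leq a^n n!$ caps the number of such encodings. Comparing the two counts forces $|\{G\in\mathcal{C}(n):f_H(G)\leq \alpha n\}|\leq e^{-\alpha n}|\mathcal{C}(n)|$ for $\alpha$ small enough, which is precisely the claimed tail bound.

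\textbf{Main obstacle.} The technical heart of Step 2 is making sure each intermediate graph of the rewiring really lies in $\mathcal{C}$: after removing the edges incident to a bad block $B$, the residual graph $G[V\setminus B]$ may fall apart into components that are not themselves in $\mathcal{C}$, so decomposability cannot be applied to the intermediate object. The McDiarmid--Steger--Welsh remedy is to interleave each edge removal with a bridge insertion along a canonical spanning structure, so that bridge-addability alone certifies $\mathcal{C}$-membership at every step, and to book-keep the moves efficiently enough that the entropy of the switches is strictly less than the gain in pendant count. Converting the linear first-moment estimate of Step 1 into a genuine exponential tail in this way is the substantive part of the proof.
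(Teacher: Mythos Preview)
The paper does not prove this statement: it is quoted from \cite{mcdiarmid2005random} and used only through its corollaries, so there is no in-paper proof to compare against. I therefore comment only on whether your sketch could be completed.

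Step~2 has a genuine gap. The addable hypotheses permit exactly two operations that are guaranteed to stay inside $\mathcal{C}$: taking the disjoint union of two members of $\mathcal{C}$, and adding a single edge between two \emph{distinct} components of a member of $\mathcal{C}$. They do not permit deleting edges, deleting vertices, or adding an edge inside an existing component. Your rewiring $\Phi_B$ uses all of these forbidden moves: you delete the edges incident to $B$, you overwrite $G[B]$, and the very first ``bridge'' you insert joins a vertex of $B$ to an outside vertex while $B$ still lies in the same component as that vertex. Your proposed remedy of interleaving each deletion with a bridge insertion does not repair this, because a single deletion may already take the graph out of $\mathcal{C}$, and once outside $\mathcal{C}$ bridge-addability can no longer be invoked. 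For minor-closed $\mathcal{C}$ the deletions are legal and a switching of this shape can be made to work, but the theorem is stated for arbitrary small addable $\mathcal{C}$. The McDiarmid--Steger--Welsh argument is organised so that only the two allowed operations are ever used: one builds graphs in $\mathcal{C}(n)$ from a core in $\mathcal{C}(n-kh)$ by adjoining $k$ disjoint copies of $H$ (decomposability) and then $k$ bridges (bridge-addability), and double-counts the resulting pairs against all of $\mathcal{C}(n)$; no deletion is ever performed.

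Step~1 is also not justified as written. From supermultiplicativity and smallness you correctly obtain that $\gamma=\lim_n(|\mathcal{C}(n)|/n!)^{1/n}$ exists and is finite, and Fekete even yields $|\mathcal{C}(n)|/n!\le\gamma^n$. But the claimed bound $(n-h)\,|\mathcal{C}(n-h)|/|\mathcal{C}(n)|\ge c_H/n^{h-1}$ would require a matching lower bound $|\mathcal{C}(n-h)|/(n-h)!\ge c\,\gamma^{n-h}$, and nothing you have stated gives this: convergence of $n$-th roots only guarantees $|\mathcal{C}(n-h)|/(n-h)!\ge(\gamma-\varepsilon)^{n-h}$ eventually, which introduces an unwanted factor $((\gamma-\varepsilon)/\gamma)^n$ into the ratio. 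In the McDiarmid--Steger--Welsh theory this kind of smoothness is a \emph{consequence} of the appearances theorem rather than an input to its proof, so invoking it here would be circular.
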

For our purposes, we shall only need a weaker corollary of theorem \ref{mcd}:
\begin{corollary}
\label{mcd2}
Let $\mathcal{A}$ be an addable proper minor closed graph property, $H$ a fixed connected graph in $\mathcal{A}$. Denote by $\mathcal{A}'_n$ the set of all $n$ vertex graphs $G\in \mathcal{A}$ that have at least one pendent appearance of $H$. Then $\lim_{n \goesto \infty} \frac{|\mathcal{A}'_n|}{|\mathcal{A}_n|}=1$.
\end{corollary}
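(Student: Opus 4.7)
The plan is to derive this as a straightforward consequence of Theorem \ref{mcd} once the hypotheses of that theorem are verified for $\mathcal{A}$.

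First I would check that $\mathcal{A}$ satisfies the hypotheses of Theorem \ref{mcd}: it is addable by assumption; it is non-empty because it contains the fixed connected graph $H$; and it is small because Theorem \ref{NSTW} of Norine et al.\ guarantees that any proper minor closed graph property is small. Hence Theorem \ref{mcd} applies, yielding constants $\alpha > 0$ and $n_0 \in \mathbb{N}$ such that, for the uniform random graph $R_n$ on $\mathcal{A}(n)$ and all $n > n_0$, we have $\mathbb{P}[f_H(R_n) \leq \alpha n] < e^{-\alpha n}$.

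The second step is to translate this concentration bound into the claimed limit. Pick $n$ large enough that both $n > n_0$ and $\alpha n \geq 1$. Then every graph $G \in \mathcal{A}(n) \setminus \mathcal{A}'_n$ satisfies $f_H(G) = 0 \leq \alpha n$, so the fraction of such graphs is bounded by $\mathbb{P}[f_H(R_n) \leq \alpha n] < e^{-\alpha n}$. Equivalently,
\[
\frac{|\mathcal{A}'_n|}{|\mathcal{A}_n|} \;=\; 1 - \frac{|\mathcal{A}(n) \setminus \mathcal{A}'_n|}{|\mathcal{A}_n|} \;>\; 1 - e^{-\alpha n}.
\]
Letting $n \to \infty$ gives the desired limit.

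There is no real obstacle here; the content is already contained in Theorem \ref{mcd}, and the only subtlety is making sure that each hypothesis is met — in particular invoking Theorem \ref{NSTW} to obtain smallness from the proper minor closed assumption, which is weaker than smallness assumed directly in Theorem \ref{mcd}. The corollary is weaker than Theorem \ref{mcd} in that it only requires at least one pendant appearance rather than linearly many, which is exactly why the exponential tail bound is more than enough.
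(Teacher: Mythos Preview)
Your argument is correct and follows exactly the route the paper intends: the corollary is stated without proof immediately after Theorem~\ref{mcd}, with smallness supplied by Theorem~\ref{NSTW}, and your write-up simply makes explicit the (obvious) passage from the exponential tail bound to the limit statement.
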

All the properties listed in example \ref{lis} are addable and minor closed, hence the corollary applies to them.
Using this corollary, we get:
\begin{Lemma}
\label{mcd2a}
Let $P$ be a graph polynomial, $\aA$ a small addable graph property. If there is a fixed $H\in \aA$ such that if a graph $G\in \aA$ has a pendant appearance of $H$, $G$ has a $P$ mate $G'\in \aA$, then $P$ is weakly distinguishing on $\aA$.
\end{Lemma}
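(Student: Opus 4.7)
The plan is to apply Theorem \ref{mcd} of McDiarmid, Steger and Welsh directly to the fixed graph $H$ promised by the hypothesis, and then translate the resulting probabilistic statement about pendant appearances into a bound on $|U_{P,\aA}(n)|$.

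More precisely, first I would invoke Theorem \ref{mcd} with the given $H \in \aA$ and with the graph property $\aA$, which is legitimate because $\aA$ is assumed to be addable and small and $H$ is a fixed connected graph in $\aA$ (one may reduce to the connected case by taking the component of $H$ containing its root, if $H$ itself were not required to be connected; since the theorem is stated for connected $H$, I would just assume, or arrange, that $H$ is connected). This yields constants $\alpha>0$ and $n_0\in\NN$ such that for all $n>n_0$, a uniformly random $R_n\in\aA(n)$ has at least $\alpha n \geq 1$ pendant appearances of $H$ with probability at least $1-e^{-\alpha n}$. Equivalently, writing $B_n$ for the set of graphs in $\aA(n)$ having \emph{no} pendant appearance of $H$, we obtain
\[
\frac{|B_n|}{|\aA(n)|} \leq e^{-\alpha n}
\]
for all $n > n_0$.

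Next I would use the hypothesis: any $G \in \aA$ which does have a pendant appearance of $H$ admits a $P$-mate $G' \in \aA$, and so is not in $U_{P,\aA}(n)$. Consequently $U_{P,\aA}(n) \subseteq B_n$, giving
\[
\frac{|U_{P,\aA}(n)|}{|\aA(n)|} \leq \frac{|B_n|}{|\aA(n)|} \leq e^{-\alpha n} \xrightarrow[n\to\infty]{} 0,
\]
which is precisely the statement that $P$ is weakly distinguishing on $\aA$.

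I do not anticipate a genuine obstacle: the hypothesis is tailored so that the McDiarmid--Steger--Welsh theorem does the heavy lifting. The only small subtlety is making sure that Theorem \ref{mcd} applies directly to $\aA$ (which is addable and small by assumption, so it does) and that the $P$-mate produced by the hypothesis lies \emph{in} $\aA$ (which is also given). Everything else is a one-line comparison of cardinalities.
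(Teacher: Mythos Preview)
Your proposal is correct and follows essentially the same approach as the paper: use the McDiarmid--Steger--Welsh result to show that almost every graph in $\aA$ contains a pendant appearance of $H$, then invoke the hypothesis to conclude that almost every graph has a $P$-mate in $\aA$, so $U_{P,\aA}(n)/|\aA(n)|\to 0$. The paper phrases this via Corollary~\ref{mcd2}, while you go straight to Theorem~\ref{mcd}; if anything this is slightly cleaner, since the hypotheses of Theorem~\ref{mcd} (small, addable) match those of the Lemma exactly, and you also flag the implicit assumption that $H$ is connected, which both the paper's statement and proof leave tacit.
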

\begin{proof}
By corollary \ref{mcd2}, almost all graphs $G\in \aA$ have a pendant appearance of $H$, and hence almost all graphs $G\in \aA$ have a $P$-mate, so $P$ is weakly distinguishing on $\aA$.
\end{proof}

\subsection{Graphs With Genus at Most $k$}
Let $G$ be a graph.The genus of $G$, denoted $g(G)$, is the minimal genus of a surface in which $G$ can be embedded (for more on graphs embedded in surfaces see, for example, \cite{lando2013graphs,mohar2001graphs}). It is easy to see that $g(G)$ is well defined, as for all $G$, $g(G)\leq |E(G)|$. For $k\in \NN$, denote by $\cC_k$ the property of graphs with genus at most $k$. Note that in general $\cC_k$ is not addable - for instance, the genus of $K_5$, the complete graph with 5 vertices, is 1, but the genus of a disjoint union of two copies of $K_5$ has genus 2- so we can not apply theorem \ref{mcd} to $\cC_k$. However, the same result does hold for $\cC_k$ with a slight modification:

\begin{theorem}[McDiarmid \cite{mcdiarmid2008random}]
\label{mcd3}
Let $H$ be a connected graph, $k\in \NN$, and let $R_n$ be a random graph selected uniformly at random from the graphs of order $n$ in $\mathcal{C}_k$. Denote by $f_H(R_n)$ the number of pendant appearances of $H$ in $R_n$. Then there are constants $\alpha>0$,$n_0\in \mathbb{N}$ such that for all $n>n_0$,
$$
\mathbb{P}[f_H(R_n)\leq \alpha n]<e^{-\alpha n}
$$
\end{theorem}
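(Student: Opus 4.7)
The plan is to mirror the proof of Theorem \ref{mcd} from the addable setting, but to replace the disjoint-union step by bridge attachment, since $\mathcal{C}_k$ fails to be decomposable (as $g(G_1 \sqcup G_2) = g(G_1) + g(G_2)$ in general) while still being bridge-addable and small. One may assume $H \in \mathcal{C}_k$ has $h$ vertices and is connected; otherwise subgraph-closedness of $\mathcal{C}_k$ forces $f_H \equiv 0$ on $\mathcal{C}_k$, contradicting the conclusion. The overall argument is a counting-injection converting each graph in $\mathcal{C}_k(n)$ with few pendant appearances of $H$ into many graphs in $\mathcal{C}_k(n+th)$ with at least $t$ such appearances, combined with a smoothness estimate on $|\mathcal{C}_k(n)|$.

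First I would record the bridge-addability of $\mathcal{C}_k$: in any orientable embedding of a disconnected graph on a surface of genus at most $k$, a new edge between vertices in different components can be routed through a face meeting both components, keeping us on the same surface. Combined with $H \in \mathcal{C}_k$, this shows that the \emph{attachment operation}---joining a fresh labeled copy of $H$ on $h$ new vertices to a chosen $v \in V(G)$ by a single edge---sends $\mathcal{C}_k(n)$ into $\mathcal{C}_k(n+h)$ and creates exactly one new pendant appearance of $H$. Smallness of $\mathcal{C}_k$ is immediate from Theorem \ref{NSTW}, since $\mathcal{C}_k$ is a proper minor-closed class.

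Next, fix a parameter $t$ (to be optimized) and set $A_n = \{G \in \mathcal{C}_k(n) : f_H(G) \leq \alpha n\}$. Iterating the attachment operation with canonical fresh labels at each step defines a map $\Phi_t$ from $A_n \times [n]^t$ into $\mathcal{C}_k(n+th)$. Each image $G'$ carries at least $t$ marked pendant copies of $H$, and any preimage is obtained by selecting an ordered $t$-tuple of such pendant copies and deleting it, so the number of preimages of $G'$ is bounded by $f_H(G')^t \cdot t! \leq (n+th)^t \cdot t!$. Double-counting then yields
\[
|A_n| \cdot n^t \;\leq\; |\mathcal{C}_k(n+th)| \cdot (n+th)^t \cdot t!.
\]
Given a smoothness ratio of the form $|\mathcal{C}_k(n+1)|/|\mathcal{C}_k(n)| \leq c n$, choosing $t = \beta n$ with $\beta > 0$ sufficiently small and applying Stirling's approximation to both sides gives $|A_n|/|\mathcal{C}_k(n)| \leq e^{-\alpha n}$ for some $\alpha > 0$, which is the desired concentration.

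The main obstacle is establishing the smoothness ratio in the absence of free decomposability. For an addable class the bound follows from a transparent one-step add/delete bijection that exchanges an isolated-vertex attachment against a bridge. For $\mathcal{C}_k$ one must instead argue via the minor-closed structure: an upper bound comes from smallness, while a matching lower bound requires exhibiting sufficiently many distinct extensions of each graph in $\mathcal{C}_k(n)$ into $\mathcal{C}_k(n+1)$ (at minimum, attaching a pendant leaf to each vertex, which always preserves genus). This smoothness analysis is the heart of McDiarmid's treatment of genus-$k$ random graphs; once it is in place, the injection argument proceeds essentially identically to the addable case.
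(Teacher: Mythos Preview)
The paper does not prove Theorem~\ref{mcd3} at all: it is quoted verbatim from McDiarmid~\cite{mcdiarmid2008random} and used as a black box, with only the weaker Corollary~\ref{mcd4} extracted from it. So there is no ``paper's own proof'' to compare your sketch against.

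That said, your outline is broadly the shape of McDiarmid's actual argument (bridge-addability of $\mathcal{C}_k$, a growth/smoothness estimate for $|\mathcal{C}_k(n)|$, and a counting argument based on attaching and detaching pendant copies of $H$). One genuine gap, however: you assert that $H\in\mathcal{C}_k$ suffices for the attachment operation to map $\mathcal{C}_k(n)$ into $\mathcal{C}_k(n+h)$. It does not. By the Battle--Harary--Kodama--Youngs additivity of genus over blocks, attaching a pendant copy of $H$ by a bridge raises the genus by exactly $g(H)$; hence if $g(H)\geq 1$ and $g(G)=k$ the result leaves $\mathcal{C}_k$, and iterating $t=\beta n$ times is impossible. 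The same obstruction shows the theorem as stated cannot hold for non-planar $H$: linearly many disjoint pendant copies of a genus-$\geq 1$ graph would force unbounded genus. This is precisely why the paper's Corollary~\ref{mcd4} (and all subsequent applications) explicitly restrict to \emph{planar} $H$. Your sketch should impose $g(H)=0$, not merely $H\in\mathcal{C}_k$.

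A smaller point: with the canonical fresh labels you describe, the map $\Phi_t$ is already injective (the labels $>n$ identify the attached copies and their order, and their unique neighbours in $[n]$ recover the $v_i$), so the preimage bound $f_H(G')^t\cdot t!$ is both unnecessary and mis-stated. The real work, as you correctly flag, is the smoothness input $|\mathcal{C}_k(n+1)|/|\mathcal{C}_k(n)|\sim \gamma(n+1)$, which is the substantive content of McDiarmid's paper and is what makes the final inequality go through.
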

Again, we only need a weaker corollary of this theorem:
\begin{corollary}
\label{mcd4}
Let $H$ a fixed connected planar graph, $k\in \NN$ and let $\aA=\cC_k$. Denote by $\mathcal{A}'_n$ the set of all $n$ vertex graphs $G\in \mathcal{A}$ that have at least one pendent appearance of $H$. Then $\lim_{n \goesto \infty} \frac{|\mathcal{A}'_n|}{|\mathcal{A}_n|}=1$.
\end{corollary}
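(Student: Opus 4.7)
The plan is to derive this corollary as a direct consequence of Theorem \ref{mcd3}. Since $H$ is planar, $H \in \cC_0 \subseteq \cC_k = \aA$, so Theorem \ref{mcd3} is applicable and supplies constants $\alpha > 0$ and $n_0 \in \NN$ so that for every $n > n_0$, a uniformly random graph $R_n$ on $n$ vertices drawn from $\aA$ satisfies
\[
\mathbb{P}[f_H(R_n) \leq \alpha n] < e^{-\alpha n}.
\]

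The next step is to translate this tail bound into a statement about $\mathcal{A}'_n$. By definition, a graph $G$ belongs to $\mathcal{A}'_n$ iff $f_H(G) \geq 1$. For any $n$ large enough that $\alpha n \geq 1$ (in particular for all sufficiently large $n$), the event $\{f_H(R_n) \geq 1\}$ contains the event $\{f_H(R_n) > \alpha n\}$, and so
\[
\mathbb{P}[R_n \notin \mathcal{A}'_n] = \mathbb{P}[f_H(R_n) = 0] \leq \mathbb{P}[f_H(R_n) \leq \alpha n] < e^{-\alpha n}.
\]

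Because $R_n$ is drawn uniformly from $\aA_n$, the probability $\mathbb{P}[R_n \in \mathcal{A}'_n]$ equals $|\mathcal{A}'_n|/|\aA_n|$. Hence
\[
1 - \frac{|\mathcal{A}'_n|}{|\aA_n|} < e^{-\alpha n} \xrightarrow[n \to \infty]{} 0,
\]
which gives the claimed limit. There is essentially no obstacle here; the entire content of the corollary is packaged inside Theorem \ref{mcd3}, and the proof is just the standard reduction of a linear lower-bound tail estimate to a zero/nonzero dichotomy, combined with the translation from uniform-random-graph probabilities to the ratio of set sizes. The only subtle point worth flagging is why the planarity hypothesis on $H$ is needed: if $H$ had genus greater than $k$, then no $G \in \cC_k$ could contain $H$ as a subgraph, so pendant appearances of $H$ would not exist at all; planarity ensures $H \in \cC_k$ and thereby makes Theorem \ref{mcd3} meaningful.
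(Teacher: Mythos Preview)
Your argument is correct and is exactly the intended derivation: the paper presents Corollary~\ref{mcd4} as an immediate consequence of Theorem~\ref{mcd3} without spelling out a proof, and what you wrote is precisely the routine translation from the exponential tail bound to the limit statement. One very minor remark: the inclusion $\{f_H(R_n)=0\}\subseteq\{f_H(R_n)\le\alpha n\}$ holds for all $n\ge 0$ since $\alpha>0$, so the auxiliary condition $\alpha n\ge 1$ is not needed; but this does not affect the validity of your proof.
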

Using this corollary, we get:
\begin{Lemma}
\label{mcd4a}
Let $P$ be a graph polynomial, $\mathcal{C}_k$ the property of graphs with genus at most $k$. If there is a planar graph $H$ such that if a graph $G$ has a pendant appearance of $H$, $G$ has a $P$ mate $G' \in \cC_k$, then $P$ is weakly distinguishing on $\cC_k$.
\end{Lemma}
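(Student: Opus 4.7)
The plan is to mimic the proof of Lemma \ref{mcd2a} almost verbatim, replacing the use of Corollary \ref{mcd2} by Corollary \ref{mcd4}. The content of the lemma is essentially: ``pendant appearances of a fixed planar $H$ are ubiquitous in $\cC_k$, so any hypothesis that produces a $P$-mate from a pendant appearance of $H$ immediately produces $P$-mates for almost all graphs in $\cC_k$''.

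First I would fix the planar graph $H$ provided by the hypothesis and let $\cC_k'(n)$ denote the set of graphs $G \in \cC_k(n)$ that contain at least one pendant appearance of $H$. Since $H$ is planar and hence embeddable in a surface of any genus, Corollary \ref{mcd4} applies and gives
\[
\lim_{n \to \infty} \frac{|\cC_k'(n)|}{|\cC_k(n)|} = 1.
\]
Next I would invoke the hypothesis: every $G \in \cC_k'(n)$ has a $P$-mate $G' \in \cC_k$, so $G \notin U_{P,\cC_k}(n)$. Consequently $U_{P,\cC_k}(n) \subseteq \cC_k(n) \setminus \cC_k'(n)$, which gives
\[
\frac{|U_{P,\cC_k}(n)|}{|\cC_k(n)|} \leq 1 - \frac{|\cC_k'(n)|}{|\cC_k(n)|} \xrightarrow[n \to \infty]{} 0,
\]
which is precisely the definition of $P$ being weakly distinguishing on $\cC_k$.

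There is no real obstacle here: all the analytic content lies in Theorem \ref{mcd3} (McDiarmid's pendant appearance result for surfaces), and the planarity of $H$ is exactly the condition ensuring that attaching a pendant copy of $H$ to a graph in $\cC_k$ keeps it in $\cC_k$ (so that the hypothesis and the corollary are talking about the same universe). The only bookkeeping point worth stating explicitly in the write-up is that the $P$-mate is required to lie in $\cC_k$, which is exactly what the hypothesis and the definition of ``weakly distinguishing on $\cC_k$'' demand.
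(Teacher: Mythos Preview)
Your proposal is correct and follows exactly the same approach as the paper's proof: invoke Corollary \ref{mcd4} to conclude that almost all graphs in $\cC_k$ have a pendant appearance of $H$, and then use the hypothesis to conclude they have a $P$-mate in $\cC_k$. The paper's version is just terser; your explicit containment $U_{P,\cC_k}(n) \subseteq \cC_k(n)\setminus \cC_k'(n)$ and the resulting limit computation simply spell out what the paper compresses into one sentence.
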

\begin{proof}
By corollary \ref{mcd4}, almost all graphs $G\in \cC_k$ have a pendant appearance of $H$, and hence almost all graphs $G\in \cC_k$ have a $P$-mate, so $P$ is weakly distinguishing on $\cC_k$.
\end{proof}

\section{The Characteristic Polynomial}
\label{se:char}
\label{sec:MainCharacteristic}
\begin{definition}
Let $G$ be a graph, and denote by $A_G$ its adjacency matrix. 
The characteristic polynomial of $G$, denoted $P_A(G)$ is defined as the characteristic polynomial of $A_G$. 
The roots of $P_A(G)$ are referred to as the spectrum of $G$.\\
A $P_A$-unique graph is usually referred to as a graph determined by its spectrum, 
and if $G$ and $H$ are two non isomorphic graphs such that $P_A(G)=P_A(H)$, $G$ and $H$ are said to be cospectral.
\end{definition}
The characteristic polynomial and particularly its roots are a widely studied topic, see e.g. 
\cite{brouwer2011spectra} for an introduction.
A classic result of Schwenk states that almost all trees are not determined by their spectrum 
(see \cite{schwenk1973almost} for details). 
Using corollary \ref{mcd2}, we can extend this result to all small addable graph properties.

We use the following recurrence relation for the characteristic polynomial:
\begin{lemma}[see \cite{clarke1970characteristic}]
\label{prop:charGenRec}
Let $G_1$ and $G_2$ be two graphs, and let $v_1\in V(G_1)$ ,$v_2 \in V(G_2)$. Denote by $H$ the graph obtained from the disjoint union of $G_1$ and $G_2$ by adding an edge from $v_1$ to $v_2$. Then
\begin{align}
\label{eq:char1}
P_A(H,x)=P_A(G_1,x)P_A(G_2,x)-P_A(G_1-v_1,x)P_A(G_2-v_2,x)
\end{align}
\end{lemma}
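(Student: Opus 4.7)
The plan is to use the classical Sachs coefficient formula (elementary subgraph expansion) for the characteristic polynomial. Recall that for a graph $G$ on $n$ vertices,
\[
P_A(G,x) = \sum_{k=0}^{n} \Bigl( \sum_{S \in \mathcal{L}_k(G)} (-1)^{c(S)} 2^{z(S)} \Bigr) x^{n-k},
\]
where $\mathcal{L}_k(G)$ is the set of subgraphs of $G$ on exactly $k$ vertices in which every connected component is either a single edge or a cycle, $c(S)$ is the number of components of $S$, and $z(S)$ is the number of cycle components. Writing $e = \{v_1, v_2\}$ for the new edge of $H$, the proof reduces to partitioning the elementary subgraphs of $H$ according to whether $e \in S$ or not.

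First I would treat the case $e \notin S$. Since $e$ is the only edge of $H$ joining $V(G_1)$ to $V(G_2)$, any such $S$ decomposes uniquely as a disjoint union $S_1 \sqcup S_2$ of elementary subgraphs $S_i \subseteq G_i$, and conversely every such disjoint union is an elementary subgraph of $H$ avoiding $e$. Because $c$ and $z$ are additive over disjoint unions, summing the Sachs weights over these configurations produces exactly $P_A(G_1,x)\,P_A(G_2,x)$.

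Next I would treat the case $e \in S$. Here the key observation is that $e$ is a bridge of $H$ and therefore cannot lie on any cycle of $S$; hence $e$ must form its own edge-component of $S$. Deleting this component leaves an elementary subgraph of $(G_1 - v_1) \sqcup (G_2 - v_2)$, which splits uniquely as $S_1' \sqcup S_2'$ with $S_i' \in \mathcal{L}_{k_i'}(G_i - v_i)$. Compared with $S_1' \sqcup S_2'$, the subgraph $S$ has two extra vertices, one extra edge-component, and no extra cycle components, so its Sachs weight acquires the factor $(-1)^{1}2^{0} = -1$ while the exponent of $x$ becomes $(n_1-1-k_1') + (n_2-1-k_2')$. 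Summing over $S_1'$ and $S_2'$ therefore gives $-P_A(G_1-v_1,x)\,P_A(G_2-v_2,x)$, and adding the two cases yields the claimed identity.

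The crucial, and really only nontrivial, step is the bridge observation which forces $e$ to appear as an isolated edge-component of every elementary subgraph containing it; the rest is additive bookkeeping of the Sachs parameters. An alternative route would apply the Schur complement to the block form of $xI - A_H$ and use the identity $[(xI - A_{G_1})^{-1}]_{v_1 v_1} = P_A(G_1-v_1,x)/P_A(G_1,x)$ together with the matrix determinant lemma (working over $\mathbb{Q}(x)$ to avoid vanishing denominators), but the Sachs-formula proof is shorter and more transparent.
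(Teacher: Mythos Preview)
Your argument via the Sachs coefficient formula is correct: the bridge observation forces $e$ to be an isolated edge-component in any elementary subgraph of $H$ containing it, and the remaining additive bookkeeping on $c(S)$, $z(S)$ and $|V(S)|$ goes through exactly as you describe.

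As for comparison with the paper: there is nothing to compare. The paper does not prove this lemma; it simply quotes it from Clarke's 1970 paper (hence the ``see \cite{clarke1970characteristic}'' tag) and uses it as a black box in the proof of Lemma~\ref{lemma:charMate}. Your Sachs-formula proof is therefore strictly more than what the paper offers. The alternative Schur-complement/matrix-determinant-lemma route you sketch is in fact closer in spirit to Clarke's original derivation, which is purely linear-algebraic; the combinatorial Sachs proof you chose is arguably cleaner here because the bridge structure of $e$ translates directly into a statement about elementary subgraphs, with no need to invert matrices over $\mathbb{Q}(x)$.
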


Consider the graphs in Figure \ref{fig:cospectral} (considered by Schwenk in \cite{schwenk1973almost}).
\begin{figure}[h!]
	\caption{Cospectral Graphs}
	\centering
	\includegraphics[scale=0.5]{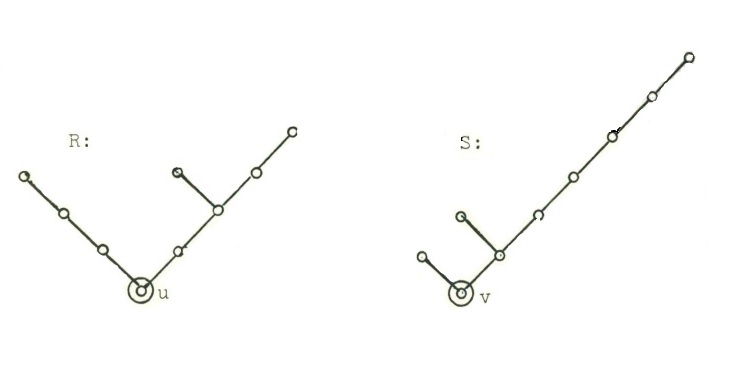}
	\label{fig:cospectral}
\end{figure}

Note that $S$ and $R$ are isomorphic, and we can check  that 
$$P_A(S-v,x)=P_A(R-u,x)=x^8-6x^6+10x^4-4x^2$$ 
Thus, we get:
\begin{lemma}
\label{lemma:charMate}
Let $G_S$ be a graph with a pendent appearance of $S$ rooted at $v$. Denote by $G_R$ the graph obtained from $G$ by replacing the pendant appearance of $S$ in $G$ with a pendant appearance of $R$, rooted at $u$. Then $G_S$ and $G_R$ are cospectral mates.
\end{lemma}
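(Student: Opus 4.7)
The plan is to apply the recurrence of Lemma \ref{prop:charGenRec} twice, once to $G_S$ and once to $G_R$, using the bridge of the pendant appearance as the distinguished edge in each case, and then to exploit the two identities already recorded before the lemma: $P_A(S,x)=P_A(R,x)$ (because $S\cong R$) and $P_A(S-v,x)=P_A(R-u,x)=x^8-6x^6+10x^4-4x^2$.

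First I would unpack the definition of a pendant appearance. By hypothesis, $V(G_S)=V(G')\sqcup V(S)$, where $G':=G_S-V(S)$, the induced subgraph $G_S[V(S)]$ is isomorphic to $S$, and there is exactly one edge between $V(G')$ and $V(S)$, incident to $v$. Call $w\in V(G')$ the other endpoint of that bridge. Then $G_S$ is precisely the graph obtained from the disjoint union of $G'$ and $S$ by adding an edge between $w$ and $v$, so Lemma \ref{prop:charGenRec} gives
\begin{equation*}
P_A(G_S,x) \;=\; P_A(G',x)\,P_A(S,x)\;-\;P_A(G'-w,x)\,P_A(S-v,x).
\end{equation*}
Since $G_R$ is obtained from $G_S$ by leaving the exterior $G'$ (and hence the bridge-endpoint $w$) untouched and replacing the pendant copy of $S$ by a pendant copy of $R$ rooted at $u$, the same lemma applied to $G_R$ yields
\begin{equation*}
P_A(G_R,x) \;=\; P_A(G',x)\,P_A(R,x)\;-\;P_A(G'-w,x)\,P_A(R-u,x).
\end{equation*}

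Subtracting the two expressions, the difference $P_A(G_S,x)-P_A(G_R,x)$ becomes
\begin{equation*}
P_A(G',x)\bigl(P_A(S,x)-P_A(R,x)\bigr)\;-\;P_A(G'-w,x)\bigl(P_A(S-v,x)-P_A(R-u,x)\bigr),
\end{equation*}
and both brackets vanish by the two identities recalled above. Hence $P_A(G_S,x)=P_A(G_R,x)$.

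To finish, one must verify that $G_S\not\cong G_R$, which is the only point where the specific geometric data of Figure \ref{fig:cospectral} enters rather than a generic polynomial identity. Although $S$ and $R$ are isomorphic as unlabelled graphs, the distinguished vertices $v$ and $u$ lie in different orbits of $\mathrm{Aut}(S)$; that is, no automorphism of the common unlabelled tree sends $v$ to $u$. Any hypothetical isomorphism $G_S\to G_R$ would have to map the unique bridge to the unique bridge and hence send $v$ to $u$, contradicting this orbit distinction. This is the only delicate step, but it is a finite check on the two rooted trees $(S,v)$ and $(R,u)$ of the figure; once it is recorded, the polynomial identity derived above delivers the conclusion that $G_S$ and $G_R$ are cospectral mates.
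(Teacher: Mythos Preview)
Your derivation of $P_A(G_S,x)=P_A(G_R,x)$ is essentially identical to the paper's: you use the same recurrence (Lemma~\ref{prop:charGenRec}) applied to the bridge of the pendant appearance, with the same exterior graph (the paper calls it $H$, you call it $G'$) and the same two pre-recorded identities $P_A(S,x)=P_A(R,x)$ and $P_A(S-v,x)=P_A(R-u,x)$.

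You go beyond the paper's proof by attempting to verify $G_S\not\cong G_R$, which the paper's own proof does not address. That extra step, however, has a gap: the bridge joining the pendant copy of $S$ to $G'$ is \emph{not} in general the unique bridge of $G_S$ (the exterior $G'$ may have bridges of its own, or other pendant trees), so a hypothetical isomorphism $G_S\to G_R$ is not forced to carry that particular edge to the corresponding edge of $G_R$, and hence is not forced to send $v$ to $u$. A correct argument must instead use the specific structure of Schwenk's tree to rule out isomorphism regardless of what $G'$ looks like (or, as in Schwenk's original treatment, argue only that $G_S\not\cong G_R$ for \emph{almost all} host graphs, which is all that the downstream application via Lemma~\ref{mcd2a} actually needs).
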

\begin{proof}
Denote by $w\in V(G_S)$ the vertex adjacent to $v$ that is not in the pendant appearance of $S$, and by $H$ the graph obtained from $G_S$ by deleting all the vertices in the pendant appearance of $S$. By applying relation \ref{eq:char1} to $G_S$  with the edge $wv$, we get
$$
P_A(G_S,x)=P_A(S,x)P_A(H,x)-P_A(S-v,x)P_A(H-w,x)
$$
By applying relation \ref{eq:char1} to $G_R$ with the edge $wu$ we get
$$
P_A(G_R,x)=P_A(R,x)P_A(H,x)-P_A(R-u,x)P_A(H-w,x)
$$
But since $P_A(R,x)=P_A(S,x)$ and $P_A(S-v,x)=P_A(R-u,x)$, we get that $P_A(G_S,x)=P_A(G_R,x)$, so $G_S$ and $G_R$ are cospectral mates.
\end{proof}
Note that $S$ and $R$, as unrooted graphs, are isomorphic. Thus, we have:
\begin{theorem}
\label{thm:charMainTheorem}
Let $\mathcal{A}$ be a small addable graph property such that $S\in \aA$. Then $P_A$ is weakly distinguishing on $\mathcal{A}$.
\end{theorem}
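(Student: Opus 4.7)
The plan is to invoke Lemma \ref{mcd2a} with $H = S$. Since $S \in \aA$ by hypothesis and $S$ is connected (being the tree from Schwenk's construction depicted in Figure \ref{fig:cospectral}), it is a legitimate choice; the lemma then reduces the theorem to showing that every $G \in \aA$ carrying a pendant appearance of $S$ admits a $P_A$-mate inside $\aA$.

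To produce the mate, given $G \in \aA$ with pendant $S$ rooted at $v$ and attached via a bridge $vw$, I would take $G' = G_R$ as in Lemma \ref{lemma:charMate}, obtained by replacing the pendant copy of $S$ with a pendant copy of $R$ rooted at $u$. Lemma \ref{lemma:charMate} already declares $G$ and $G_R$ cospectral mates (in particular $P_A(G) = P_A(G_R)$, with non-isomorphism forced by the fact that $S$ and $R$ are not isomorphic as rooted graphs, even though they coincide unrooted).

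The main obstacle I anticipate is verifying $G_R \in \aA$. Since $R \cong S$ as unrooted graphs and $\aA$ is isomorphism-closed, $R \in \aA$. Writing the connected component $B$ of $G$ containing the pendant as $B = (C_w \sqcup S) + vw$, the graph $G_R$ is obtained from $G$ by substituting $B$ with $B_R := (C_w \sqcup R) + uw$ while leaving every other component intact; those other components lie in $\aA$ by decomposability applied to $G$. The delicate point is to secure $C_w \in \aA$ from the pendant structure of $B$, after which bridge-addability applied to $C_w \sqcup R \in \aA$ yields $B_R \in \aA$, and one final appeal to decomposability reassembles $G_R \in \aA$. Once this is in hand, Lemma \ref{mcd2a} immediately delivers that $P_A$ is weakly distinguishing on $\aA$.
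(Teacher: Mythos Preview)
Your plan coincides with the paper's: the paper's entire proof is the single sentence ``From lemmas \ref{mcd2a} and \ref{lemma:charMate} we get that $P_A$ is weakly distinguishing on $\aA$.'' You have simply unpacked what Lemma~\ref{mcd2a} demands---that the mate $G_R$ lie in $\aA$---and traced it to the step $C_w \in \aA$. The paper does not argue this; its only gesture is the remark immediately preceding the theorem that $S$ and $R$ are isomorphic as unrooted graphs, which yields $R \in \aA$ but not, by itself, $G_R \in \aA$.

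Your instinct that this step is delicate is well placed. Addability (decomposability plus bridge-addability) only lets one \emph{add} components and bridges, never remove them, so nothing in the hypotheses forces the base $C_w$ obtained by excising the pendant $S$ to lie in $\aA$; one can write down small addable classes containing $S$ in which some $G$ with a pendant $S$ rooted at $v$ has $G_R \notin \aA$ (take the addable closure of $K_1$, $S$, and a single graph $G_0 = K_4$ with a pendant $S$ attached at $v$: then $K_4 \notin \aA$, and the rerooted $(G_0)_R$ is a non-tree on $|G_0|$ vertices not isomorphic to $G_0$, hence outside $\aA$). So this is a gap the paper shares with your proposal rather than a defect of your route. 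The argument becomes clean once $\aA$ is closed under subgraphs---in particular in the minor-closed corollary that immediately follows the theorem and covers every concrete class in Example~\ref{lis}---since then $C_w \in \aA$ is automatic and your reassembly $C_w \sqcup R \in \aA$, then $B_R \in \aA$ by bridge-addability, then $G_R \in \aA$ by decomposability, goes through verbatim.
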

\begin{proof}
From lemmas \ref{mcd2a} and \ref{lemma:charMate} we get that $P_A$ is weakly distinguishing on $\aA$.
\end{proof}
\begin{corollary}
Let $\mathcal{A}$ be a proper minor closed addable graph property. Then $P_A$ is weakly distinguishing on $\mathcal{A}$.
\end{corollary}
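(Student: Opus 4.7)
The plan is to deduce this corollary directly from Theorem \ref{thm:charMainTheorem} by verifying its two hypotheses: smallness of $\aA$, and membership of Schwenk's graph $S$ in $\aA$.

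First I would invoke Theorem \ref{NSTW} (Norine--Seymour--Thomas--Wollan): since $\aA$ is proper minor closed, it is small. This takes care of the smallness hypothesis with no further work. Combined with the assumption that $\aA$ is addable, we already have two of the three hypotheses of Theorem \ref{thm:charMainTheorem} in hand.

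Next I would argue that $S \in \aA$. The remark immediately following the proposition characterizing addable minor closed families states that any non-empty minor closed addable graph property contains all forests (because it contains $K_1$, is closed under disjoint union within the class, and is closed under bridge addition). Inspection of Figure \ref{fig:cospectral} (or the degree of $P_A(S-v,x)$, which shows $S$ has $9$ vertices and is a tree from Schwenk's classical construction) confirms that $S$ is a tree. Hence $S$ belongs to $\aA$, since $\aA$ is a non-empty (as $\aA(n) \neq \emptyset$ for all sufficiently large $n$) minor closed addable graph property.

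With all hypotheses verified, Theorem \ref{thm:charMainTheorem} immediately yields that $P_A$ is weakly distinguishing on $\aA$. The only mild subtlety — and really the only thing one has to check beyond quoting results — is the observation that $S$ is a forest, so that the ``contains all forests'' remark actually gives $S \in \aA$; I do not foresee a genuine obstacle here.
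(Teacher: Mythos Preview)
Your proposal is correct and follows exactly the paper's own argument: invoke Theorem \ref{NSTW} for smallness, observe that $S$ is a tree and hence lies in $\aA$ because every non-empty minor closed addable property contains all forests, and conclude via Theorem \ref{thm:charMainTheorem}. There is nothing to add.
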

\begin{proof}
Since all forests are in $\mathcal{A}$, and $S$ is a tree, by theorems \ref{thm:charMainTheorem} and \ref{NSTW}, $P_A$ is weakly distinguishing on $\mathcal{A}$.
\end{proof}
Since all the properties in list \ref{lis} are addable and minor closed, we have:
\begin{corollary}
$P_A$ is weakly distinguishing on all the properties listed in example \ref{lis}.
\end{corollary}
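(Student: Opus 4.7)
The plan is to show that the hypotheses of Theorem \ref{thm:charMainTheorem} are automatically satisfied for any proper minor closed addable graph property, so the corollary follows directly. Theorem \ref{thm:charMainTheorem} requires two things of $\mathcal{A}$: that it be small, and that it contain the graph $S$ from Figure \ref{fig:cospectral}.

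For smallness, I would invoke Theorem \ref{NSTW} of Norine--Seymour--Thomas--Wollan, which states that every proper minor closed graph property is small. Since $\mathcal{A}$ is proper and minor closed by assumption, this step is immediate.

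For the containment $S \in \mathcal{A}$, I would use the remark in Section \ref{se:addable} that any non-empty minor closed addable graph property contains all forests: it contains $K_1$, it is closed under disjoint unions by decomposability, and adding edges between distinct components keeps one inside $\mathcal{A}$ by bridge-addability, so iterating these operations produces every forest. Since $\mathcal{A}(n)$ is assumed non-empty for all sufficiently large $n$, $\mathcal{A}$ is non-empty. The graph $S$ of Schwenk's construction in Figure \ref{fig:cospectral} is a tree, hence a forest, and therefore lies in $\mathcal{A}$.

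With both hypotheses verified, Theorem \ref{thm:charMainTheorem} applies and yields that $P_A$ is weakly distinguishing on $\mathcal{A}$. There is no real obstacle here; the work has already been done in establishing the general Theorem \ref{thm:charMainTheorem} together with the structural facts about addable minor closed classes. The only thing to double-check is that $S$ is indeed a tree (so that the "contains all forests" argument applies rather than needing a stronger closure property), which is visible from Schwenk's original construction.
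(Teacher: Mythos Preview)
Your argument is correct and follows the paper's approach. The paper actually packages your reasoning into a separate preceding corollary (``Let $\mathcal{A}$ be a proper minor closed addable graph property. Then $P_A$ is weakly distinguishing on $\mathcal{A}$''), proved exactly as you propose: smallness via Theorem~\ref{NSTW}, $S\in\mathcal{A}$ because every nonempty minor closed addable class contains all forests and $S$ is a tree, then Theorem~\ref{thm:charMainTheorem}. The statement about Example~\ref{lis} is then deduced by the one-line observation (made in the sentence immediately preceding the corollary) that each class in that list is addable and minor closed. You have effectively merged both steps, leaving only that last observation about the specific classes implicit; otherwise the arguments are identical.
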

Similarly, we get:
\begin{theorem}
\label{thm:charWeaklyOnGenus}
Denote by $\cC_k$ the class of graphs of genus at most $k$. Then $P_A$ is weakly distinguishing on $\cC_k$ for all $k\in \NN$.
\end{theorem}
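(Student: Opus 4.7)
The plan is to apply Lemma \ref{mcd4a} with exactly the same ``pendant swap'' trick that was used in Theorem \ref{thm:charMainTheorem}, only now we must check that the swap preserves the genus bound rather than addability.

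First, I would take $H = S$, where $S$ is the tree from Figure \ref{fig:cospectral}. Since $S$ is a tree, it is certainly planar, so it is an admissible choice of $H$ for Lemma \ref{mcd4a}. Next, suppose $G \in \cC_k$ has a pendant appearance of $S$ rooted at $v$, and let $G_R$ be obtained by replacing this pendant appearance with a pendant appearance of $R$ rooted at $u$, as in Lemma \ref{lemma:charMate}. That lemma already gives $P_A(G) = P_A(G_R)$, and since $S$ and $R$ are non-isomorphic as rooted trees (the edge to the rest of the graph attaches at a different vertex) the graphs $G$ and $G_R$ are non-isomorphic. Thus $G_R$ is a genuine cospectral mate of $G$.

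The one thing left to verify is that $G_R \in \cC_k$. This is where the pendant hypothesis does its work: the copy of $S$ (respectively $R$) is attached to the rest of the graph by a single bridge, so by the standard additivity of genus across a cut-edge one has
\[
g(G) = g(G - (V(S)\setminus\{v\})) + g(S) \quad \text{and} \quad g(G_R) = g(G - (V(S)\setminus\{v\})) + g(R).
\]
Since $S$ and $R$ are trees, $g(S) = g(R) = 0$, so $g(G_R) = g(G) \leq k$, i.e.\ $G_R \in \cC_k$. (Alternatively, fix a genus-$k$ embedding of $G$; delete the pendant copy of $S$, then re-attach an isomorphic tree by a single edge into the same face at a different interior vertex, which is always possible for a tree and preserves the genus.)

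With these two ingredients in hand, the hypotheses of Lemma \ref{mcd4a} are satisfied: $S$ is a planar connected graph and every $G \in \cC_k$ containing a pendant appearance of $S$ has a $P_A$-mate in $\cC_k$. Lemma \ref{mcd4a} then gives that $P_A$ is weakly distinguishing on $\cC_k$, as desired. The only potentially delicate point is the genus-preservation claim, but because the swap takes place inside a pendant subgraph of genus $0$ attached by a bridge, this is a routine application of the genus additivity for one-edge-connected sums rather than a real obstacle.
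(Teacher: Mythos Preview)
Your proof is correct and follows essentially the same route as the paper: apply Lemma~\ref{mcd4a} with the tree $S$ (which is planar) and invoke Lemma~\ref{lemma:charMate} to produce the cospectral mate. The paper's own proof is a single sentence citing these two lemmas; you additionally spell out why the mate $G_R$ remains in $\cC_k$, which the paper leaves implicit (relying on the fact that $S$ and $R$ are isomorphic trees, so swapping the pendant does not change the genus).
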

\begin{proof}
Since $S$ is a tree, from lemmas \ref{mcd4a} and \ref{lemma:charMate} we get that $P_A$ is weakly distinguishing on $\cC_k$.
\end{proof}

Instead of the characteristic polynomial $P_A$, which is based on the adjacency matrix,
one can also look at the analogue graph polynomial $P_L$ which is based on the Laplacian matrix 
$L_G = D_G - A_G$, where $D_G$ is the diagonal matrix with diagonal elements $d_{v,v}= deg(v)$ for
each $v \in V(G)$. Now $P_L(G;x)$ is the characteristic polynomial of $L_G$.
An early survey about the Laplacian polynomial may be found in \cite{mohar1991laplacian}.
The graph polynomials are $d.p.$-equivalent on regular graphs, but $d.p.$-incomparable on all finite graphs,
cf. \cite{brouwer2011spectra} and \cite{makowsky2019logician}.
Our proofs of Theorems \ref{thm:charMainTheorem} and \ref{thm:charWeaklyOnGenus} do not work for $P_L(G;x)$.

\begin{problem}
Find two graphs $R$ and $S$ which can be used two prove the analogue of 
Theorems \ref{thm:charMainTheorem} and \ref{thm:charWeaklyOnGenus}.
\end{problem}

\section{The Domination Polynomial}
\label{se:dom}
\label{sec:MainDomination}
The Domination Polynomial is the generating function of dominating sets in a given graph $G$. More formally,
\begin{definition}
Let $G$ be a graph. A set $S \subseteq V(G)$ is called a dominating set if for every $v\in V(G)$, either $v\in S$ or $v$ has a neighbor $u\in S$. Define $Dom(G;x)=\sum_{S\subseteq V(G)} x^{|S|}$ where the sum is over all dominating sets of $G$.
\end{definition}

The Domination Polynomial was extensively studied in recent years, see \cite{alikhani2009introduction} for a survey. In \cite{kotek2012recurrence}, the following was observed:
\begin{lemma}
\label{leaves}
Let $G$ be a graph. We say a vertex $v\in V(G)$ is a stem if it has a neighbor $u\in V(G)$ with $deg(u)=1$. Assume $G$ has two stems $u,u'\in G$, $(u,u')\in E(G)$, and denote by $G'$ the graph resulting from $G$ by deleting the edge $(u,u')$. Then for every set $S\subseteq V(G)$, $S$ is a dominating set of $G$ if and only if $S$ is a dominating set of $G'$.
\end{lemma}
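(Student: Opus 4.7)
The plan is to observe that the only vertices whose neighborhoods change when we delete the edge $(u,u')$ are $u$ and $u'$ themselves, and then argue separately for the two inclusions $\mathrm{DomSets}(G) \subseteq \mathrm{DomSets}(G')$ and $\mathrm{DomSets}(G') \subseteq \mathrm{DomSets}(G)$.

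The easy direction is $(\Leftarrow)$: any edge addition can only help domination. More precisely, $G$ has strictly more edges than $G'$, so if $v \in V(G)$ is dominated in $G'$ (either lies in $S$ or has a neighbor in $S$ in $G'$), then the same witness works in $G$, because every edge of $G'$ is an edge of $G$. So I would dispose of this direction in one sentence.

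The substantive direction is $(\Rightarrow)$: assume $S$ dominates $G$ and show $S$ dominates $G'$. For every vertex $v \notin \{u,u'\}$, the $G$-neighborhood and the $G'$-neighborhood of $v$ coincide, so $v$ is dominated in $G'$. It remains to check $u$ and $u'$. Here I would use the stem hypothesis: let $w$ be the degree-one neighbor of $u$, and $w'$ the degree-one neighbor of $u'$. Since $w$'s only neighbor in $G$ is $u$, the requirement that $S$ dominate $w$ forces $u \in S$ or $w \in S$. In either case, $u$ still has a witness for domination in $G'$, because the edge $(u,w)$ is unaffected by the deletion. The symmetric argument handles $u'$ using $w'$.

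I do not anticipate a serious obstacle: the lemma is really a bookkeeping observation that the potentially ``lost'' neighbor-pair $\{u,u'\}$ is always saved by the forced presence of either the stem or its leaf in $S$. The only thing to be careful about is the degenerate possibility that $w = u'$ or $w' = u$, but this is ruled out by the assumption $\deg(w)=\deg(w')=1$ together with the edge $(u,u')$ (if $w=u'$, then $u'$ has degree at least $2$ because of the edge to $u'$'s leaf $w'$, contradicting $\deg(w)=1$). I would flag this briefly to make the proof airtight and then conclude.
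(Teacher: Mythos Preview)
Your proposal is correct and follows essentially the same argument as the paper: the $(\Leftarrow)$ direction is immediate since adding edges preserves domination, and for $(\Rightarrow)$ only $u$ and $u'$ need checking, which is handled by noting that each stem's leaf forces the stem or the leaf into $S$. Your extra care about the degenerate case $w=u'$ goes beyond what the paper writes; note, though, that your argument there does not quite exclude the $K_2$ component case $w=u'$, $w'=u$ (where the lemma actually fails), so if you want to be fully airtight you should assume the leaves are distinct from $u,u'$---an assumption the paper leaves implicit.
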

\begin{proof}
If $S$ is a dominating set of $G'$, it is clearly also a dominating set of $G$. On the other hand, let $S$ be a dominating set of $G$. After deleting $(u,u')$, the only vertices that are perhaps not dominated by $S$ are $u,u'$. But $u$ has a vertex of degree 1 as a neighbor, so either it or $u$ are in $S$, and in either case $u$ is dominated. The same applies to $u'$. So $S$ is a dominating set of $G'$.
\end{proof}
From this lemma, we get:
\begin{theorem}
Every graph $G$ that has two distinct stems has a $Dom$-mate.
\end{theorem}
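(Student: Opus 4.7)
The plan is to split on whether the two stems are already adjacent, and either delete or add the edge between them.

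Let $u, u' \in V(G)$ be two distinct stems. In the first case, suppose $(u,u') \in E(G)$. Then I apply Lemma \ref{leaves} directly: the graph $G'$ obtained from $G$ by deleting the edge $(u,u')$ satisfies $Dom(G;x) = Dom(G';x)$. Since $G'$ has exactly one fewer edge than $G$, the two graphs cannot be isomorphic, so $G'$ is a $Dom$-mate of $G$.

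In the second case, suppose $(u,u') \notin E(G)$. I would form $G^+$ by adding the edge $(u,u')$ to $G$. In $G^+$, both $u$ and $u'$ remain stems, since their respective pendant neighbors are untouched by the edge addition, and now $u, u'$ are adjacent in $G^+$. Applying Lemma \ref{leaves} to the pair $(G^+, u, u')$ gives that deleting the edge $(u,u')$ from $G^+$ preserves the domination polynomial; but this deletion just recovers $G$. Hence $Dom(G^+;x) = Dom(G;x)$, and since $G^+$ has one more edge than $G$, the graphs are non-isomorphic, so $G^+$ is a $Dom$-mate of $G$.

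The only possible obstacle is ruling out that the constructed mate is isomorphic to $G$, and this is handled immediately by the edge count, which differs by exactly one in either case. Nothing degenerate occurs: in the first case, each stem has degree at least two before deletion (one edge to its pendant neighbor plus the edge $(u,u')$), so no isolated vertices are produced; in the second case, adding an edge cannot create any issue. Thus in every situation we obtain a $Dom$-mate of $G$.
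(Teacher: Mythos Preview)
Your proof is correct and follows the same idea as the paper, which simply states that the theorem is ``a direct consequence of the lemma.'' You are in fact more careful than the paper: Lemma~\ref{leaves} is stated only for \emph{adjacent} stems, and your case split (delete the edge if adjacent, add it if not and then apply the lemma to $G^+$) makes explicit the step the paper leaves to the reader.
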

\begin{proof}
This is a direct consequence of the lemma.
\end{proof}
Thus, we have:
\begin{corollary}
\label{cor:domMateP5}
Let $G$ be a graph that has a pendant appearance of $P_5$ rooted at $r$ (see Figure \ref{fig:p5}), and let $\hat{G}$ be the graph obtained from $G$ by replacing the pendant appearance of $P_5$ with a pendant appearance of $\hat{P}_5$ rooted at $r$. Then $G$ and $\hat{G}$ are $Dom$-mates.
\end{corollary}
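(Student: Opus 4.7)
The plan is to reduce the corollary to a single application of Lemma~\ref{leaves} by exhibiting $\hat{P}_5$ as $P_5$ with an edge inserted between two stems (or vice versa). Concretely, write $P_5$ as the path $v_1 v_2 v_3 v_4 v_5$ and take $\hat{P}_5$ to be the graph on the same vertex set obtained from $P_5$ by adding the edge $v_2 v_4$ (so $v_2, v_3, v_4$ form a triangle with the leaves $v_1$ and $v_5$ attached at $v_2$ and $v_4$). Choose the root $r$ of the pendant appearance to be the middle vertex $v_3$, so that both leaves $v_1$ and $v_5$ lie in the pendant appearance.

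With this choice, I would first argue that $v_2$ and $v_4$ are stems of $\hat{G}$. Because a pendant appearance has, by definition, exactly one edge leaving the appearance and that edge is incident with the root $r = v_3$, every non-root vertex of the appearance has the same neighbors in $\hat{G}$ as inside $\hat{P}_5$. In particular $v_1$ and $v_5$ retain degree $1$ in $\hat{G}$, so $v_2$ (a neighbor of $v_1$) and $v_4$ (a neighbor of $v_5$) are indeed stems of $\hat{G}$, and by construction they are adjacent via the edge $v_2 v_4$.

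Next I would apply Lemma~\ref{leaves} to the edge $v_2 v_4$ in $\hat{G}$. The lemma yields that every subset $S \subseteq V(\hat{G}) = V(G)$ is a dominating set of $\hat{G}$ if and only if it is a dominating set of the graph $\hat{G}$ with the edge $v_2 v_4$ removed. But removing $v_2 v_4$ turns the pendant appearance of $\hat{P}_5$ back into a pendant appearance of $P_5$ while leaving the rest of $\hat{G}$ untouched, so the resulting graph is exactly $G$. Hence $G$ and $\hat{G}$ have the same collection of dominating sets, and in particular the same domination polynomial, which is what the corollary asserts.

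The only real content here is matching the picture in Figure~\ref{fig:p5} to the description above; no obstacle is expected, since the lemma does all the combinatorial work and the pendant-appearance hypothesis guarantees that the two stems we need remain stems after attachment. Should the labels in the figure force $v_2, v_4$ to play slightly different roles, the same argument applies verbatim to whichever pair of adjacent stems witnesses the single-edge difference between $P_5$ and $\hat{P}_5$.
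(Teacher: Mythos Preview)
Your proposal is correct and is exactly the argument the paper intends: the corollary is stated without an explicit proof, immediately after Lemma~\ref{leaves} and the one-line theorem derived from it, and your reduction---identifying $\hat G$ as $G$ plus a single edge between two adjacent stems inside the pendant appearance and invoking Lemma~\ref{leaves}---is precisely that intended deduction. Your hedge about the exact labelling in Figure~\ref{fig:p5} is appropriate and harmless, and the non-isomorphism of $G$ and $\hat G$ (needed for them to be \emph{mates}) is immediate from the different edge counts.
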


\begin{figure}[h!]
	\caption{A path of length 5}
	\centering
	\includegraphics[scale=1]{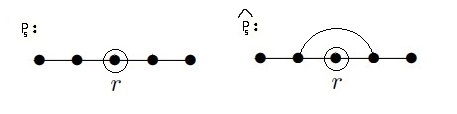}
\label{fig:p5}
\end{figure}

And so:
\begin{theorem}
\label{thm:mainDomTheorem}
Let $\mathcal{A}$ be a small addable graph property such that $P_5,\hat{P}_5\in \mathcal{A}$. Then $Dom$ is weakly distinguishing on $\mathcal{A}$.
\end{theorem}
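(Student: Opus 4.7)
The plan is to mirror the proof of Theorem \ref{thm:charMainTheorem}: apply Lemma \ref{mcd2a} with the fixed connected graph $H = P_5$, which is in $\aA$ by hypothesis. To verify the premise of Lemma \ref{mcd2a}, we must show that every $G \in \aA$ admitting a pendant appearance of $P_5$ has a $Dom$-mate inside $\aA$. Corollary \ref{cor:domMateP5} hands us the natural candidate $\hat{G}$, obtained by replacing the pendant $P_5$ at its root $r$ with a pendant $\hat{P}_5$ at the same root, and already guarantees that $G$ and $\hat{G}$ are $Dom$-mates.

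The remaining step is to certify that $\hat{G} \in \aA$. The swap is local to the single component $C$ of $G$ that contains the pendant; the other components of $\hat{G}$ coincide with those of $G$ and lie in $\aA$ by decomposability. Decomposing $C$ as an external part $D$ glued to the pendant $P_5$ by a bridge $rw$ with $w \in D$, the altered component $\hat{C}$ is obtained from $D \sqcup \hat{P}_5$ by reinserting the bridge $rw$. Since $\hat{P}_5 \in \aA$ by hypothesis, decomposability gives $D \sqcup \hat{P}_5 \in \aA$ as soon as $D \in \aA$, and then bridge-addability yields $\hat{C} \in \aA$; decomposability one more time produces $\hat{G} \in \aA$. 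With this in hand, Lemma \ref{mcd2a} completes the proof.

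The main obstacle is the clause ``$D \in \aA$'': the raw definition of \emph{addable} does not automatically grant closure under removing a pendant subtree. In all the intended applications (the minor-closed addable classes of Example \ref{lis}) this is automatic because such an $\aA$ is closed under induced subgraphs; for a purely axiomatic statement one either folds this into the hypothesis or rebuilds $\hat{C}$ inductively from smaller pieces that are visibly in $\aA$. Either way the combinatorial core of the argument reduces to exactly the pendant-swap mechanism used for $P_A$ in Theorem \ref{thm:charMainTheorem}, now instantiated by the pair $(P_5, \hat{P}_5)$ supplied by Lemma \ref{leaves}.
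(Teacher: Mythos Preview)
Your approach is exactly the paper's: invoke Lemma~\ref{mcd2a} with $H=P_5$ and supply the mate via Corollary~\ref{cor:domMateP5}. The paper's own proof is a one-line citation of these two results and does not pause to justify $\hat G\in\aA$ at all.

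Your extra paragraph, decomposing the containing component and rebuilding $\hat C$ from $D\sqcup\hat P_5$ by bridge-addability, is the natural way to fill that step in, and your caveat about ``$D\in\aA$'' is a legitimate observation: the bare addable axioms do not force closure under removing a pendant subgraph, so the theorem as literally stated carries a tacit hypothesis (monotonicity, or at least closure under removing pendant trees) that happens to hold in every class of Example~\ref{lis}. In short, your write-up matches the paper's argument and is more scrupulous about the one step the paper leaves implicit.
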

\begin{proof}
From lemma \ref{mcd2a} and corollary \ref{cor:domMateP5} we get that $Dom$ is weakly distinguishing on $\aA$.
\end{proof}
\begin{corollary}
Let $\mathcal{A}$ be a proper minor closed addable graph property, such that $\hat{P}_5\in \mathcal{A}$. Then $Dom$ is weakly distinguishing on $\mathcal{A}$
\end{corollary}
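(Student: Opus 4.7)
The plan is to simply verify that the hypotheses of Theorem \ref{thm:mainDomTheorem} are all met under the assumption of this corollary, so the conclusion transfers immediately. There are three hypotheses to check for $\aA$: smallness, addability, and containing both $P_5$ and $\hat{P}_5$.

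First, I would observe that $\aA$ is non-empty, since by assumption it contains $\hat{P}_5$. Combined with being minor closed and addable, the remark made just before Theorem \ref{NSTW} applies: any non-empty, minor closed, addable graph property contains all forests (it contains $K_1$, is closed under disjoint unions by decomposability, and closed under adding bridges between components). In particular, $P_5$ is a tree, so $P_5 \in \aA$. Together with the standing assumption $\hat{P}_5 \in \aA$, this supplies the two specific graphs required by Theorem \ref{thm:mainDomTheorem}.

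Next, I would invoke Theorem \ref{NSTW} of Norine, Seymour, Thomas and Wollan: every proper minor closed graph property is small. Since $\aA$ is by assumption proper minor closed, we conclude that $\aA$ is small.

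At this point all hypotheses of Theorem \ref{thm:mainDomTheorem} are verified for $\aA$: it is small, addable, and contains both $P_5$ and $\hat{P}_5$. Applying that theorem, $Dom$ is weakly distinguishing on $\aA$, finishing the proof. There is no real obstacle here; the only step that requires any care is noticing that the single hypothesis $\hat{P}_5 \in \aA$, plus the structural assumptions on $\aA$, already forces $P_5 \in \aA$ via the forest-containment observation, so one does not need to assume $P_5 \in \aA$ separately as in Theorem \ref{thm:mainDomTheorem}.
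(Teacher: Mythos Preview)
Your proof is correct and follows essentially the same route as the paper: use the fact that a non-empty minor closed addable property contains all forests to get $P_5\in\aA$, invoke Theorem~\ref{NSTW} for smallness, and then apply Theorem~\ref{thm:mainDomTheorem}. The paper's version is terser but the logical structure is identical.
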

\begin{proof}
Note that, since all forests are in $\aA$, $P_5\in \aA$, and hence from theorems \ref{thm:mainDomTheorem} and \ref{NSTW} $Dom$ is weakly distinguishing on $\mathcal{A}$. 
\end{proof}
As in the previous sections, we have:
\begin{corollary}
$Dom$ is weakly distinguishing on all the properties listed in example \ref{lis}.
\end{corollary}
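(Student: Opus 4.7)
The plan is to invoke the preceding corollary, which guarantees that $Dom$ is weakly distinguishing on every proper, minor closed, addable graph property $\mathcal{A}$ containing $\hat{P}_5$, and to apply it in turn to each class listed in Example \ref{lis}. Each of the seven classes is addable (asserted in Example \ref{lis} itself) and is a well-known minor closed graph property. Each is moreover proper (none contains every finite graph), so Theorem \ref{NSTW} gives that each is small.

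What remains is to verify that the fixed five-vertex graph $\hat{P}_5$ from Figure \ref{fig:p5} belongs to every listed class. Since $\hat{P}_5$ is a concrete small graph, this is a short case-by-case inspection: one reads off its planarity, outerplanarity, treewidth, chromatic number, longest cycle, and minor-exclusion invariants and compares them against the defining parameter of each class. For the parametrized families (iv)--(vii) the required containment holds at every admissible value of $k$ (for instance, once $k$ is large enough to accommodate the triangle and the treewidth of $\hat{P}_5$).

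Once the hypotheses of the preceding corollary have been verified class by class, the final corollary follows immediately by applying that corollary to each property on the list. The only real obstacle is the short case analysis in the previous paragraph, but since $\hat{P}_5$ is an explicit small graph and each listed property is defined by a single structural parameter, this requires no new ideas beyond comparing two numbers in each case.
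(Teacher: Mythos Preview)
Your approach matches the paper's: the corollary is stated there without proof, as an immediate consequence of the preceding corollary together with the paper's earlier assertion (just after Corollary~\ref{mcd2}) that every class in Example~\ref{lis} is addable and minor closed. You spell this out explicitly and additionally flag that the hypothesis $\hat{P}_5\in\mathcal{A}$ must be checked, something the paper glosses over entirely.

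That said, your sentence about the parametrised families is self-contradictory: ``holds at every admissible value of $k$'' is incompatible with the parenthetical ``once $k$ is large enough''. The parenthetical is the correct statement. Since $\hat{P}_5$ contains a triangle, it is not $2$-colourable and has a $K_3$ minor, so it fails to lie in class~(v) for $k=2$ and in class~(vii) for $k\le 3$; it likewise fails~(vi) whenever that parameter is below~$3$. Read literally, the paper's own corollary shares this gap at the smallest parameter values. A further inherited slip worth noting: class~(v), the $k$-colourable graphs, is not minor closed (contract an edge of an even cycle to obtain an odd one) and is not small, so neither the preceding corollary nor Theorem~\ref{NSTW} applies to it as stated.
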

Similarly, we get:
\begin{theorem}
\label{thm:DomGenus}
Denote by $\cC_k$ the property of graphs of genus less than $k$. Then $Dom$ is weakly distinguishing on $\cC_k$ for all $k\in \NN$.
\end{theorem}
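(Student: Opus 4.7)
The plan is to proceed in exact parallel with the proof of Theorem \ref{thm:charWeaklyOnGenus}, using Lemma \ref{mcd4a} as the engine and Corollary \ref{cor:domMateP5} as the local replacement operation. The witness connected planar graph will be $H = P_5$, which is a tree and hence planar, so it is a legitimate input to Lemma \ref{mcd4a}.

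First, I would invoke Corollary \ref{cor:domMateP5}: any graph $G$ possessing a pendant appearance of $P_5$ rooted at $r$ has a $Dom$-mate $\hat{G}$, obtained by replacing that pendant copy of $P_5$ with a pendant copy of $\hat{P}_5$ rooted at the corresponding vertex. This provides, for free, the $Dom$-mate needed in the hypothesis of Lemma \ref{mcd4a}.

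The one point that requires a sentence of justification, and is the only place where the ``genus at most $k$'' setting matters, is that $\hat{G}$ actually lies in $\cC_k$. Because a pendant appearance is attached to the rest of the graph through exactly one bridge incident to the root, $G$ is a $1$-sum of the pendant copy of $P_5$ with the graph $H$ obtained by deleting the pendant vertices, and $\hat{G}$ is the analogous $1$-sum of $\hat{P}_5$ with $H$. Genus is additive under $1$-sums, so
\[
g(\hat{G}) \;=\; g(H) + g(\hat{P}_5) \;=\; g(H) + 0 \;=\; g(H) + g(P_5) \;=\; g(G) \;\leq\; k,
\]
where we used that both $P_5$ and $\hat{P}_5$ are planar. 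Hence $\hat{G}\in \cC_k$, so $G$ indeed has a $Dom$-mate \emph{inside} $\cC_k$.

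Having verified the hypothesis of Lemma \ref{mcd4a} with the planar connected graph $H = P_5$, the conclusion that $Dom$ is weakly distinguishing on $\cC_k$ is immediate. The potentially tricky step — and the only one where a reader might pause — is the genus bookkeeping for $\hat{G}$; everything else is a direct citation. I do not anticipate any serious obstacle, since the replacement is entirely local and swaps one planar $5$-vertex subgraph for another.
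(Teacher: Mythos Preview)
Your proposal is correct and matches the paper's own proof, which is the one-line invocation of Lemma~\ref{mcd4a} and Corollary~\ref{cor:domMateP5} together with the observation that $P_5$ and $\hat P_5$ are planar. The extra genus bookkeeping you supply for $\hat G$ is exactly the point the paper leaves implicit; note only that the attachment is via a bridge rather than a literal $1$-sum, but block-additivity of genus yields the same conclusion.
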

\begin{proof}
Since $P_5$ and $\hat{P}_5$ are planar, from lemma \ref{mcd4a} and corollary \ref{cor:domMateP5} we get that $Dom$ is weakly distinguishing on $\cC_k$.
\end{proof}

\section{The $\xi$ Polynomial}
\label{se:xi}
The $\xi$ polynomial, introduced in \cite{averbouch2008most}\cite{averbouch2008extension}, 
generalizes the matching and the Tutte polynomial. It is defined via a recurrence relation:
\begin{definition}
Let $G$ be a graph. Define the tri-variate polynomial $\xi(G;x,y,z)$ using the recursive relation
\begin{align}
\label{eq:xirec}
\xi(G;x,y,z)=\xi(G-e;x,y,z)+y\xi(G/e;x,y,z)+z\xi(G\dagger e;x,y,z)
\end{align}
$$
\xi(G\sqcup H;x,y,z)=\xi(G;x,y,z)\xi(H;x,y,z)
$$
with the base conditions $\xi(K_1;x,y,z)=x$ and $\xi(\emptyset;x,y,z)=1$.
\end{definition}

Let $G =(V(G),E(G))$ be a graph and $A \subseteq E(G)$. We denote by $G\langle A \rangle$ the graph
$(V(G), A)$, the spanning subgraph of $A$ in $G$.
An alternative representation of the $\xi$ polynomial was introduced by Trinks in \cite{trinks2011covered}:
\begin{definition}
Let $G$ be a graph. The covered components polynomial $C(G;x,y,z)$ of $G$ is defined as:
$$
C(G;x,y,z)=\sum_{A\subseteq E(G)}x^{k(G\langle A \rangle )}y^{|A|}z^{c(G\langle A \rangle )}
$$
were $G\langle A \rangle $ is the graph $(V(G),A)$, 
$k(G\langle A \rangle )$ is the number of connected components of $G\langle A \rangle $ and $c(G\langle A \rangle )$ 
is the number of covered connected components of $G\langle A \rangle $, that is connected components in $G\langle A \rangle $ with at least one edge.
\end{definition}

We sometimes write $C(G)$ for $C(G;x,y,z)$ to simplify long expressions.
The two definitions are connected via the following result:

\begin{proposition}[\cite{trinks2011covered}]
For all graphs $G$,
\begin{align}
C(G;x,y,z)=\xi(G;x,y,xyz-xy)
\end{align}
\end{proposition}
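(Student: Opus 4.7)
The plan is to prove the identity by induction on $|E(G)|$. For the base case $E(G) = \emptyset$, the graph $G$ is a disjoint union of $|V(G)|$ isolated vertices; only the empty subset $A = \emptyset$ contributes to $C$, giving $C(G;x,y,z) = x^{|V(G)|}$, and by multiplicativity together with $\xi(K_1;x,y,w) = x$ we also obtain $\xi(G;x,y,xyz-xy) = x^{|V(G)|}$.

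For the inductive step, I would fix an edge $e = uv$ and establish the auxiliary recurrence
\begin{align*}
C(G;x,y,z) = C(G-e;x,y,z) + y\,C(G/e;x,y,z) + (xyz-xy)\,C(G\dagger e;x,y,z),
\end{align*}
which is precisely the defining recurrence of $\xi$ after substituting $w = xyz - xy$; the inductive hypothesis then delivers the theorem. To prove this recurrence I split the sum defining $C(G;x,y,z)$ according to whether $e \in A$. The subsets with $e \notin A$ are exactly the subsets of $E(G-e)$ (with the same $k$ and $c$ statistics) and contribute $C(G-e;x,y,z)$.

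For the subsets $A = B \cup \{e\}$ with $B \subseteq E(G-e)$, I would map $B$ to its image $B' \subseteq E(G/e)$ and compare $G\langle B \cup \{e\}\rangle$ with $(G/e)\langle B'\rangle$. The key combinatorial observation is that $k(G\langle B \cup \{e\}\rangle) = k((G/e)\langle B'\rangle)$ in every case, while the covered-component counts agree \emph{except} when both $u$ and $v$ are isolated in $G\langle B\rangle$, equivalently when $B \subseteq E(G\dagger e)$: in that case $\{u,v,e\}$ is a new covered component on the left, whereas the contracted vertex $w$ is isolated (hence uncovered) on the right, producing one extra factor of $z$. Summing the ``both endpoints isolated'' contribution yields $xyz \cdot C(G\dagger e;x,y,z)$, and the remaining contribution is $y\,C(G/e;x,y,z) - xy\,C(G\dagger e;x,y,z)$, after subtracting the ``$w$ isolated'' subsets already counted inside $y\,C(G/e;x,y,z)$. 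Adding the two pieces gives $y\,C(G/e) + (xyz - xy)\,C(G\dagger e)$, as required.

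The main obstacle is handling parallel edges produced by contracting $e$ when $u$ and $v$ share a common neighbor: the bijection $B \leftrightarrow B'$ then ceases to be an identity, and one must either work in a multigraph setting where $\xi$ and $C$ are consistently defined or carefully verify that the parallel-edge identifications preserve both $k$ and $c$. Once this technicality is addressed, the bookkeeping above is entirely routine.
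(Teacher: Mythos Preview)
The paper does not give its own proof of this proposition; it is quoted from Trinks \cite{trinks2011covered} without argument. Your induction on $|E(G)|$, proving that $C$ satisfies the recurrence
\[
C(G)=C(G-e)+y\,C(G/e)+(xyz-xy)\,C(G\dagger e)
\]
and then matching it against the defining recurrence of $\xi$ with third variable $xyz-xy$, is exactly the natural route and is essentially how Trinks proceeds. Your split of the $e\in A$ terms according to whether both endpoints of $e$ are isolated in $G\langle B\rangle$ is correct: in the ``both isolated'' case the extra covered component $\{u,v\}$ produces the factor $xyz$ against $C(G\dagger e)$, while the complementary case matches $(G/e)\langle B'\rangle$ term by term and yields $y\,C(G/e)-xy\,C(G\dagger e)$ after removing the ``$w$ isolated'' subsets. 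The arithmetic checks out.

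Your caveat about parallel edges is well placed but easily resolved: both $\xi$ and $C$ are defined on multigraphs (indeed the recurrence for $\xi$ forces this, since $G/e$ may acquire parallel edges or loops), and in that setting the correspondence $B\leftrightarrow B'$ is simply the identity on $E(G)\setminus\{e\}=E(G/e)$. With that convention the bookkeeping you describe goes through verbatim, loops included, so there is no genuine gap.
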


\begin{corollary}
\label{cor:xiDistC}
Let $G$ and $H$ be graphs. Then $C(G;x,y,z)=C(H;x,y,z)$ if and only if $\xi(G;x,y,z)=\xi(H;x,y,z)$
\end{corollary}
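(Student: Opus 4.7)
The plan is to leverage the identity $C(G;x,y,z)=\xi(G;x,y,xyz-xy)$ from the preceding proposition, treating the substitution $z\mapsto xyz-xy$ as an invertible change of variable in the third slot. The $(\Leftarrow)$ direction is immediate: if $\xi(G;x,y,z)=\xi(H;x,y,z)$ as polynomials in $x,y,z$, then substituting $z\mapsto xyz-xy$ into both sides preserves equality, and by the proposition this yields $C(G;x,y,z)=C(H;x,y,z)$.

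For the nontrivial $(\Rightarrow)$ direction, I would set $D(x,y,w):=\xi(G;x,y,w)-\xi(H;x,y,w)\in\mathbb{Q}[x,y,w]$ and show $D\equiv 0$. The hypothesis $C(G)=C(H)$ combined with the proposition gives $D(x,y,xyz-xy)=0$ as an identity in $\mathbb{Q}[x,y,z]$. Now the key observation is that $xyz-xy=xy(z-1)$, so for any specialisation $(x,y)=(x_0,y_0)$ with $x_0y_0\neq 0$, the map $z\mapsto x_0y_0(z-1)$ is a bijection from $\mathbb{Q}$ to itself. Therefore the univariate polynomial $w\mapsto D(x_0,y_0,w)$ vanishes on all of $\mathbb{Q}$, and hence is identically zero.

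Writing $D(x,y,w)=\sum_{i=0}^{d}d_i(x,y)\,w^i$, the previous step tells us that each coefficient $d_i(x,y)$ vanishes on the Zariski-dense set $\{(x_0,y_0):x_0y_0\neq 0\}\subseteq\mathbb{Q}^2$. Consequently $d_i(x,y)\equiv 0$ in $\mathbb{Q}[x,y]$ for every $i$, whence $D\equiv 0$ and $\xi(G;x,y,z)=\xi(H;x,y,z)$.

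There is no real obstacle here; the only subtlety is making the change-of-variable argument watertight. One must be careful not to confuse specialisation of $z$ with specialisation of $w$: the equality we start from holds after the substitution $w=xy(z-1)$, and the invertibility of this substitution (for generic $x,y$) is what allows us to recover the polynomial identity in $(x,y,w)$. Everything else is routine.
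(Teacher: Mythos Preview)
Your argument is correct. The paper itself gives no proof of this corollary at all; it simply states it immediately after the proposition $C(G;x,y,z)=\xi(G;x,y,xyz-xy)$ and moves on, treating the equivalence as self-evident from the invertibility of the change of variable. Your write-up supplies exactly the details that justify this: the substitution $w\mapsto xy(z-1)$ is a bijection on $\mathbb{Q}$ for each fixed $(x_0,y_0)$ with $x_0y_0\neq 0$, and Zariski density then forces all coefficients of $D$ to vanish.

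A marginally slicker packaging of the same idea is to observe that $w\mapsto xy(z-1)$ induces an isomorphism of $\mathbb{Q}(x,y)$-algebras $\mathbb{Q}(x,y)[w]\to\mathbb{Q}(x,y)[z]$ (with inverse $z\mapsto 1+w/(xy)$), so the vanishing of $D(x,y,xy(z-1))$ immediately gives $D=0$ in $\mathbb{Q}(x,y)[w]$ and hence in $\mathbb{Q}[x,y,w]$. But this is the same content as your argument, and in any case you have provided more than the paper does.
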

We will use the following recurrence relation:

\begin{theorem}[\cite{trinks2011covered}]
\label{thm:Crec}
Let $G_1,G_2$ be graphs, and $v_1\in V(G_1),v_2\in V(G_2)$. 
Let $H$ be the graph obtained from $G_1$ and $G_2$ by identifying $v_1$ with $v_2$. Then the following holds:
\begin{gather}
\label{eq:Crec}
C(G)=(\frac{1}{xz}+\frac{2}{x})C(G_1)C(G_2)+
\notag\\
(-\frac{1}{z}-1)\left[ C(G_1)C(G_2-v_2)+C(G_1-v_1)C(G_2)\right]+ 
\notag\\
(\frac{x}{z}+x)C(G_1-v_1)C(G_2-v_2)
\end{gather}
\end{theorem}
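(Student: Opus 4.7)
The plan is to prove the identity directly from the subset-sum definition of $C$, exploiting the fact that every edge of $H$ lies in exactly one of $E(G_1)$ or $E(G_2)$. I would first decompose each $A \subseteq E(H) = E(G_1) \sqcup E(G_2)$ uniquely as $A = A_1 \sqcup A_2$ with $A_i \subseteq E(G_i)$, so that $|A| = |A_1|+|A_2|$ and the summation splits as a double sum over $A_1$ and $A_2$. The work then reduces to understanding how the statistics $k$ and $c$ of $H\langle A\rangle$ depend on those of $G_i\langle A_i\rangle$.

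The second step is the key combinatorial identity. Let $\chi_i := 1$ if $v_i$ is incident to some edge of $A_i$ (equivalently, $v_i$ lies in a covered component of $G_i\langle A_i\rangle$), and $\chi_i := 0$ otherwise. Merging $G_1\langle A_1\rangle$ and $G_2\langle A_2\rangle$ at $v_1 = v_2$ fuses precisely the components containing $v_1$ and $v_2$, which gives
$$k(H\langle A\rangle) = k(G_1\langle A_1\rangle) + k(G_2\langle A_2\rangle) - 1.$$
A short case analysis on the four possibilities for $(\chi_1,\chi_2)$ shows that the fused component is covered iff $\chi_1 \vee \chi_2 = 1$, and after cancelling contributions already counted in $c(G_1\langle A_1\rangle)$ and $c(G_2\langle A_2\rangle)$ we obtain
$$c(H\langle A\rangle) = c(G_1\langle A_1\rangle) + c(G_2\langle A_2\rangle) - \chi_1\chi_2.$$

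Next I would use the split $z^{-\chi_1\chi_2} = 1 + (z^{-1}-1)\chi_1\chi_2$. The ``$1$'' part factors immediately as $x^{-1}C(G_1)C(G_2)$. For the correction I restrict to $\chi_i = 1$; the complementary range $\chi_i = 0$ consists of those $A_i$ with no edge incident to $v_i$, i.e.\ $A_i \subseteq E(G_i-v_i)$, and since $v_i$ then contributes an extra isolated (uncovered) component its generating function equals $xC(G_i-v_i)$. Hence
$$\sum_{A_i:\chi_i=1} x^{k(G_i\langle A_i\rangle)} y^{|A_i|} z^{c(G_i\langle A_i\rangle)} = C(G_i) - xC(G_i-v_i).$$
Substituting back produces a compact identity of the form
$$xC(H) = C(G_1)C(G_2) + (z^{-1}-1)\bigl(C(G_1) - xC(G_1-v_1)\bigr)\bigl(C(G_2)-xC(G_2-v_2)\bigr),$$
from which the stated recurrence follows by expanding the product and dividing by $x$.

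The main obstacle is the case analysis delivering the $-\chi_1\chi_2$ correction for $c(H\langle A\rangle)$: the fused component's covered status depends on $\chi_1 \vee \chi_2$, whereas the double-counting with the already-tallied components of $G_1\langle A_1\rangle$ and $G_2\langle A_2\rangle$ produces a $-(\chi_1+\chi_2)$ term, and tracking how these two effects combine cleanly is where a first attempt is most likely to go astray. The remaining expansion is routine algebra in $x,y,z$, with the only bookkeeping risk being the factor of $x$ supplied by the isolated vertex in the $\chi_i = 0$ restricted sums.
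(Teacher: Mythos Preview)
The paper does not prove this theorem at all; it is simply quoted from Trinks and used as a black box. So there is no ``paper's own proof'' to compare against, and your direct combinatorial argument is a genuine contribution rather than a rediscovery.

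Your derivation is correct through the compact identity
\[
xC(H)=C(G_1)C(G_2)+(z^{-1}-1)\bigl(C(G_1)-xC(G_1-v_1)\bigr)\bigl(C(G_2)-xC(G_2-v_2)\bigr).
\]
The case analysis for $c(H\langle A\rangle)=c_1+c_2-\chi_1\chi_2$ is right (indeed $c(H\langle A\rangle)=(c_1-\chi_1)+(c_2-\chi_2)+(\chi_1\vee\chi_2)$ and $\chi_1\vee\chi_2=\chi_1+\chi_2-\chi_1\chi_2$), and the identification of the $\chi_i=0$ restricted sum with $xC(G_i-v_i)$ is clean.

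There is, however, a slip in your final sentence. Expanding your compact identity and dividing by $x$ gives
\[
C(H)=\tfrac{1}{xz}\,C(G_1)C(G_2)+\bigl(1-\tfrac{1}{z}\bigr)\bigl[C(G_1)C(G_2-v_2)+C(G_1-v_1)C(G_2)\bigr]+\bigl(\tfrac{x}{z}-x\bigr)C(G_1-v_1)C(G_2-v_2),
\]
whose coefficients are \emph{not} those printed in the paper's statement. The discrepancy between the two displayed formulas is exactly $\tfrac{2}{x}\bigl(C(G_1)-xC(G_1-v_1)\bigr)\bigl(C(G_2)-xC(G_2-v_2)\bigr)$, which is not identically zero: for $G_1=G_2=K_2$ (so $H=P_3$) your formula correctly yields $C(P_3)=x^3+2x^2yz+xy^2z$, while the coefficients as printed produce an extra $2xy^2z^2$. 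In other words, your compact identity is the correct one, and the version printed in the paper (which also writes $C(G)$ for $C(H)$) appears to contain transcription errors. You should not claim that your expansion recovers it verbatim; rather, your argument supplies the correct form of the recurrence, which is all that the subsequent application in Lemma~\ref{lemma:CMates} actually needs.
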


Consider the graph $S$ in Figure \ref{pseudosimilar}.
\begin{figure}[h!]
        \caption{A tree with pseudosimilar vertices}
        \centering
        \includegraphics[scale=0.75]{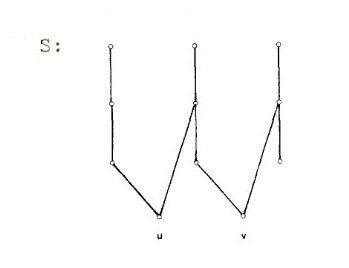}
        \label{pseudosimilar}
\end{figure}

We use the graphs in Figure \ref{pseudosimilar}, together with theorem \ref{thm:Crec} to prove:
\begin{lemma}
\label{lemma:CMates}
Let $G_v$ be a graph with a pendent appearance of $S$ (depicted in Figure \ref{pseudosimilar}) 
rooted at $v$, and let $G_u$ be the graph obtained from $G_v$ by replacing 
the pendant appearance of $S$ rooted at $v$ by a pendant appearance of $S$ rooted at $u$. 
Then $G_v$ and $G_u$ are $C$ mates.
\end{lemma}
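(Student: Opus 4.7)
The plan is to apply Theorem \ref{thm:Crec} to both $G_v$ and $G_u$, decomposing each at the attachment vertex of the pendant copy of $S$, and to observe that the two resulting expansions differ only in terms involving $C(S-v)$ versus $C(S-u)$. It then suffices to verify that $C(S-v) = C(S-u)$, which should hold because $u$ and $v$ are pseudosimilar vertices of $S$ in Figure \ref{pseudosimilar}, i.e.\ $S-u \cong S-v$.

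More concretely, let $K$ denote the graph obtained from $G_v$ by deleting all vertices of the pendant copy of $S$ except its root. By the construction of $G_u$ from $G_v$, the same graph $K$, with the same distinguished degree-$1$ vertex $r$ (corresponding to the endpoint of the bridge that lies inside the pendant copy of $S$), is obtained by the analogous deletion applied to $G_u$. Then $G_v$ is recovered from $S$ and $K$ by identifying $v \in V(S)$ with $r \in V(K)$, while $G_u$ is recovered by identifying $u \in V(S)$ with $r \in V(K)$. Writing $\alpha = \tfrac{1}{xz}+\tfrac{2}{x}$, $\beta = -\tfrac{1}{z}-1$, and $\gamma = \tfrac{x}{z}+x$, Theorem \ref{thm:Crec} gives
\begin{align*}
C(G_v) &= \alpha\, C(S)C(K) + \beta\bigl[C(S)C(K-r) + C(S-v)C(K)\bigr] + \gamma\, C(S-v)C(K-r),\\
C(G_u) &= \alpha\, C(S)C(K) + \beta\bigl[C(S)C(K-r) + C(S-u)C(K)\bigr] + \gamma\, C(S-u)C(K-r).
\end{align*}
Subtracting yields
\begin{align*}
C(G_v) - C(G_u) = \bigl[C(S-v) - C(S-u)\bigr]\bigl[\beta\, C(K) + \gamma\, C(K-r)\bigr].
\end{align*}

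Hence the lemma reduces to the identity $C(S-v) = C(S-u)$. The plan is to read off from Figure \ref{pseudosimilar} that $u$ and $v$ are pseudosimilar in the tree $S$, meaning $S-u \cong S-v$ even though no automorphism of $S$ sends $u$ to $v$; from this isomorphism the equality of $C$-polynomials is immediate. The only delicate point, and thus the main obstacle, is the bookkeeping that the auxiliary graph $K$ (together with its distinguished vertex $r$) is literally the same in both decompositions, so that the two applications of Theorem \ref{thm:Crec} align term by term. This is a direct consequence of the definition of a pendant appearance, which guarantees a unique bridge edge incident with the root. Once that alignment is in place, the proof is essentially a one-line application of Theorem \ref{thm:Crec} combined with the pseudosimilarity of $u$ and $v$ in $S$.
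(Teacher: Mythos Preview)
Your proof is correct and follows the paper's argument essentially line for line: the paper applies Theorem~\ref{thm:Crec} with $G_1=S$ and $G_2$ equal to your graph $K$ (which it denotes $H_1$, respectively $H_2$, with distinguished vertex $w$ in place of your $r$), and then concludes from $H_1\cong H_2$, $H_1-w\cong H_2-w$, and $S-v\cong S-u$. Your explicit subtraction and factorization make the dependence on the single identity $C(S-v)=C(S-u)$ a bit more transparent, but the content is identical.
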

\begin{proof}
We apply relation \ref{eq:Crec} to $G_v$, with $G_1=S$ and $G_2$ being the graph $H_1$ 
obtained from $G_v$ by replacing the pendant appearance of $S$ with a single vertex $w$. We get:
\begin{gather}
C(G_v)=(\frac{1}{xz}+\frac{2}{x})C(S)C(H_1)+
\notag\\
(-\frac{1}{z}-1)\left[ C(S)C(H_1-w)+C(S-v)C(H_1)\right]+
\notag\\
(\frac{x}{z}+x)C(S-v)C(H_1-w)
\end{gather}
Similarly, we apply relation \ref{eq:Crec} to $G_u$, 
with $G_1=S$ and $G_2$ being the graph $H_2$ obtained from $G_u$ by 
replacing the pendant appearance of $S$ with a single vertex $w$. We get:
\begin{gather}
C(G_u)=(\frac{1}{xz}+\frac{2}{x})C(S)C(H_2)+
\notag\\
(-\frac{1}{z}-1)\left[ C(S)C(H_2-w)+C(S-u)C(H)\right]+
\notag\\
(\frac{x}{z}+x)C(S-u)C(H_2-w)
\notag
\end{gather}
Note that $H_1 \cong H_2$, $H_1-w \cong H_2-w$, and $S-v \cong S-u$, and hence $C(G_v)=C(G_u)$.
\end{proof}
As a direct consequence of lemma \ref{lemma:CMates} and corollary \ref{cor:xiDistC}, we have:
\begin{corollary}
\label{cor:xiMates}
Let $G_v$ be a graph with a pendent appearance of $S$ (depicted in Figure \ref{pseudosimilar}) rooted at $v$, and let $G_u$ be the graph obtained from $G_v$ by replacing the pendant appearance of $S$ rooted at $v$ by a pendant appearance of $S$ rooted at $u$. Then $G_v$ and $G_u$ are $\xi$ mates.
\end{corollary}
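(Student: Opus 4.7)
The plan is to derive this corollary in two lines from the two preceding results, namely Lemma \ref{lemma:CMates} (which establishes equality of the covered components polynomial on $G_v$ and $G_u$) and Corollary \ref{cor:xiDistC} (which records that $C$ and $\xi$ are $d.p.$-equivalent). Explicitly, I would first invoke Lemma \ref{lemma:CMates} to obtain $C(G_v;x,y,z) = C(G_u;x,y,z)$, and then apply Corollary \ref{cor:xiDistC} to conclude $\xi(G_v;x,y,z) = \xi(G_u;x,y,z)$.

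The only point that requires a separate word is that the paper's convention requires $P$-mates to be non-isomorphic. This follows from the pseudosimilarity of $u$ and $v$ in $S$, indicated in Figure \ref{pseudosimilar}: although $S - u \cong S - v$ (the symmetry that drove Lemma \ref{lemma:CMates}), the vertices $u$ and $v$ lie in distinct orbits of $\mathrm{Aut}(S)$. Consequently, any isomorphism $G_v \to G_u$ would necessarily carry the unique bridge attaching the pendant copy of $S$ in $G_v$ to the corresponding bridge in $G_u$, and would therefore restrict to an automorphism of $S$ sending $v$ to $u$, which is impossible. Thus the present corollary is essentially a packaging step: the main content sits in Lemma \ref{lemma:CMates}, and there is no real obstacle to overcome here.
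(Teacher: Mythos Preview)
Your proposal is correct and matches the paper's approach exactly: the paper records this corollary as ``a direct consequence of Lemma~\ref{lemma:CMates} and Corollary~\ref{cor:xiDistC}'', which is precisely your two-line derivation. Your extra remark on non-isomorphism is a welcome clarification the paper leaves implicit; note, though, that the ``unique bridge'' you invoke need not be unique for every $G_v$ (the ambient graph may contain other pendant copies of $S$), so your argument as stated covers only the generic case---which is all that is actually needed downstream for Theorem~\ref{thm:mainXiTheorem}.
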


Thus, we have:
\begin{theorem}
\label{thm:mainXiTheorem}
Let $\mathcal{A}$ be a small addable graph property such that $S\in \aA$. Then $\xi$ is weakly distinguishing on $\mathcal{A}$. 
\end{theorem}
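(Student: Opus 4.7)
The plan is to derive the theorem from Lemma \ref{mcd2a} by taking the pendant graph $H$ to be the tree $S$ of Figure \ref{pseudosimilar}. Since $S \in \aA$ by hypothesis, it suffices to exhibit, for every $G \in \aA$ that contains a pendant appearance of $S$, a $\xi$-mate of $G$ that also lies in $\aA$.

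Given such a $G$ with pendant copy of $S$ on vertex set $W$, root $v \in W$, and bridge $e = vw$ to an external vertex $w \in V(G)\setminus W$, the candidate mate is the graph $G_u$ obtained by deleting $e$ and inserting the edge $uw$, where $u$ is the pseudosimilar partner of $v$ in $S$ highlighted in Figure \ref{pseudosimilar}. Corollary \ref{cor:xiMates} already tells us that $\xi(G_u) = \xi(G)$. What still needs to be verified is that $G_u \in \aA$. For this I would apply decomposability to $G - e$ first: its connected components are the pendant copy of $S$ (in $\aA$ by hypothesis) and the connected components of $G \setminus W$, which are all components of a graph in $\aA$. Hence $G - e \in \aA$. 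Since $u \in W$ and $w \in V(G)\setminus W$ now lie in different components of $G - e$, bridge-addability of $\aA$ produces $G_u \in \aA$, completing the construction.

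With the $\xi$-mate $G_u$ located inside $\aA$, Lemma \ref{mcd2a} yields the conclusion immediately. The genuinely substantive content of the argument is carried by Lemma \ref{lemma:CMates} together with Corollary \ref{cor:xiDistC}: two applications of Theorem \ref{thm:Crec} at the pendant bridge, combined with the pseudosimilarity $S - u \cong S - v$, produce the same closed form for $C(G)$ and $C(G_u)$. The only potentially delicate issue in the present step is the bookkeeping needed to keep $G_u$ inside $\aA$ after the bridge swap, and this is precisely what the decomposability and bridge-addability assumptions on $\aA$ are tailored for.
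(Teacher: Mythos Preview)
Your approach is exactly the paper's: invoke Lemma~\ref{mcd2a} with $H=S$ and supply the $\xi$-mate $G_u$ via Corollary~\ref{cor:xiMates}. The paper's proof is literally the one-line citation of those two results; you go further and try to check that $G_u\in\aA$, which the paper leaves implicit.

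That extra verification, however, has a soft spot. You argue that the components of $G-e$ are the pendant copy of $S$ together with the components of $G\setminus W$, ``which are all components of a graph in~$\aA$.'' But the component of $G\setminus W$ containing $w$ is \emph{not} a component of $G$: in $G$ it is joined to $W$ through the bridge $e$, so it sits strictly inside a larger component of $G$. Decomposability of $\aA$ therefore does not hand you membership of that piece in $\aA$, and addability alone does not guarantee closure under bridge deletion. (Concretely, take $\aA=\{\,G:\text{every component has at least two vertices}\,\}$; this class is addable, yet deleting an end-bridge can produce an isolated vertex.) Your conclusion $G-e\in\aA$ thus needs more than what you have written.

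In practice this is harmless for the applications, since every concrete class used in the paper (proper minor-closed addable classes, graphs of bounded genus) is monotone, and then $G\setminus W\in\aA$ is immediate. But at the stated level of generality the step is not justified --- and, to be fair, the paper itself does not justify it either.
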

\begin{proof}
From lemma \ref{mcd2a} and corollary \ref{cor:xiMates} we get that $\xi$ is weakly distinguishing on $\aA$.
\end{proof}
\begin{corollary}
Let $\mathcal{A}$ be a proper minor closed addable graph property. Then $\xi$ is weakly distinguishing on $\mathcal{A}$.
\end{corollary}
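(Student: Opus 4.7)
The plan is to reduce this corollary to Theorem \ref{thm:mainXiTheorem} by verifying its two hypotheses (smallness and $S \in \mathcal{A}$) using the general facts already recorded in Section \ref{se:addable}. This mirrors the proofs of the analogous corollaries for $P_A$ in Section \ref{se:char} and for $Dom$ in Section \ref{se:dom}.

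First I would invoke Theorem \ref{NSTW} of Norine--Seymour--Thomas--Wollan to conclude that, since $\mathcal{A}$ is proper and minor closed, $\mathcal{A}$ is small. Next I would use the observation made right after the Mc\-Diarmid characterization of addable minor closed classes, namely that any nonempty minor closed addable graph property contains all forests: it contains $K_1$ by nonemptyness, and is closed under disjoint unions and under adding edges between distinct components, which together generate all forests. Inspecting Figure \ref{pseudosimilar}, the graph $S$ is a tree, hence a forest, so $S \in \mathcal{A}$.

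Having verified both hypotheses of Theorem \ref{thm:mainXiTheorem} for $\mathcal{A}$, I would apply it to conclude that $\xi$ is weakly distinguishing on $\mathcal{A}$, completing the proof.

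There is essentially no obstacle here: the work was all done in establishing Lemma \ref{lemma:CMates}, Corollary \ref{cor:xiMates}, and Theorem \ref{thm:mainXiTheorem}; this corollary merely packages a convenient sufficient condition in terms of forbidden minors. The only thing to double check is that the tree $S$ of Figure \ref{pseudosimilar} indeed lies in every addable minor closed property, which follows from the forest remark above.
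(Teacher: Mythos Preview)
Your proposal is correct and follows essentially the same approach as the paper: invoke Theorem~\ref{NSTW} for smallness, use the observation that every nonempty minor closed addable property contains all forests to get $S\in\mathcal{A}$, and then apply Theorem~\ref{thm:mainXiTheorem}. This is precisely the paper's argument.
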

\begin{proof}
Note that, since all forests are in $\aA$, $S\in \aA$, and hence from theorems \ref{thm:mainXiTheorem} and \ref{NSTW}, $M$ is weakly distinguishing on $\mathcal{A}$.
\end{proof}
As in the previous section, we get
\begin{corollary}
$\xi$ is weakly distinguishing on all the properties listed in example \ref{lis}.
\end{corollary}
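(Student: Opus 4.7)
The plan is to invoke the immediately preceding corollary (which asserts that $\xi$ is weakly distinguishing on every proper minor closed addable graph property) once for each item in the list. Thus the task reduces to checking, for every property $\aA$ appearing in Example \ref{lis}, three hypotheses:
\begin{enumerate}[(a)]
\item $\aA$ is addable, which is exactly the content of Example \ref{lis} and therefore needs no further argument;
\item $\aA$ is proper and minor closed, so that Theorem \ref{NSTW} applies and gives smallness of $\aA$;
\item the tree $S$ of Figure \ref{pseudosimilar} lies in $\aA$, so that Theorem \ref{thm:mainXiTheorem} is applicable.
\end{enumerate}

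For item (c) I would use the standing observation from Section \ref{se:addable}: any non-empty addable minor closed graph property contains the one-vertex graph (being a minor of anything in $\aA$) and is closed under disjoint unions and under bridge-addition, hence contains every forest; since $S$ is a tree, $S\in\aA$. So the three hypotheses reduce to (a) and (b).

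First I would go down the list and verify (b) routinely: planar, outerplanar, and series-parallel graphs are classical minor closed families defined by finitely many forbidden minors; the classes of graphs of tree-width at most $k$, graphs with no cycle of length greater than $k$, and graphs with no $K_k$-minor are manifestly minor closed and proper. For the $k$-colorable case I would appeal to the same blanket remark the paper makes after Example \ref{lis}. Once (b) is verified, combining (a), (b), (c) with Theorem \ref{thm:mainXiTheorem} and Theorem \ref{NSTW} (equivalently, by the preceding corollary) yields that $\xi$ is weakly distinguishing on each such $\aA$.

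The only conceivable obstacle is bookkeeping the list rather than mathematical substance, since every non-trivial argument — pendant $S$ gives a $\xi$-mate, almost every graph in an addable small class has a pendant appearance of $S$, and proper minor closed implies small — has already been established. I would therefore write the proof as a single sentence that invokes the previous corollary uniformly across the seven items of Example \ref{lis}, possibly adding a parenthetical reminder that containment of $S$ follows from each class containing all forests.
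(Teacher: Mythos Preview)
Your approach is exactly the paper's: the paper states this corollary with nothing more than ``As in the previous section, we get'', i.e.\ it simply applies the preceding corollary (proper minor closed addable $\Rightarrow$ $\xi$ weakly distinguishing) uniformly to the seven items, relying on the sentence after Corollary~\ref{mcd2} that ``All the properties listed in example~\ref{lis} are addable and minor closed.'' Your unpacking of (a)--(c) is a faithful expansion of that one-liner.

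There is, however, a genuine gap that you inherit by deferring to the paper's blanket remark for item~(v). The class of $k$-colorable graphs is \emph{not} minor closed: contracting one edge of the bipartite cycle $C_{6}$ yields $C_{5}$, which is not $2$-colorable. Worse, it is not small either, since already the labeled bipartite graphs on $n$ vertices number roughly $2^{n^{2}/4}$, which eventually exceeds $a^{n}n!$ for every constant $a$. Hence neither the preceding corollary nor Theorem~\ref{thm:mainXiTheorem} (which needs smallness) applies to item~(v), and the McDiarmid--Steger--Welsh pendant-appearance theorem cannot be invoked as stated. Your instinct to single out the $k$-colorable case was correct; the resolution is not to cite the paper's assertion but either to drop item~(v) from the corollary or to supply an independent argument (e.g.\ a pendant-appearance result valid for this non-small class), neither of which the paper provides.
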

Similarly, we get:
\begin{theorem}
\label{thm:XiGenus}
Denote by $\cC_k$ the class of graphs of genus less than $k$. Then $\xi$ is weakly distinguishing on $\cC_k$ for all $k\in \NN$.
\end{theorem}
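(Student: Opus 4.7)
The plan is to mirror the arguments used for Theorems \ref{thm:charWeaklyOnGenus} and \ref{thm:DomGenus}, replacing the small-addable hypothesis with the genus hypothesis by invoking Lemma \ref{mcd4a} in place of Lemma \ref{mcd2a}. First I would observe that the graph $S$ of Figure \ref{pseudosimilar} is a tree, and in particular a planar connected graph. This is the key planarity requirement needed to apply Lemma \ref{mcd4a}.

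Next I would appeal to Corollary \ref{cor:xiMates}: whenever a graph $G_v \in \cC_k$ has a pendant appearance of $S$ rooted at $v$, the graph $G_u$ obtained by re-rooting that pendant appearance at the pseudosimilar vertex $u$ is a $\xi$-mate of $G_v$. Before invoking Lemma \ref{mcd4a}, I must verify that $G_u$ still lies in $\cC_k$. This is the only genuinely new step compared to the previous section: since $S$ is a tree, any embedding of $G_v$ on a surface of genus at most $k$ admits a local modification that replaces the tree $S$ attached at $v$ with a tree $S$ attached at $u$ in the same face adjacent to the single bridge vertex $w$, without altering the embedding anywhere else. Consequently $g(G_u) \leq g(G_v) \leq k$, so $G_u \in \cC_k$, and the pair $(G_v, G_u)$ is a $\xi$-mate pair inside $\cC_k$.

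With this in hand the conclusion is immediate from Lemma \ref{mcd4a} applied to the planar connected graph $H = S$: almost all graphs in $\cC_k$ have a pendant appearance of $S$, and each such graph has a $\xi$-mate inside $\cC_k$. Hence $\xi$ is weakly distinguishing on $\cC_k$ for every $k \in \NN$.

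The main (and essentially only) obstacle is the verification that the $\xi$-mate constructed by pseudosimilar re-rooting stays inside $\cC_k$. I expect this to follow by a one-line re-embedding argument thanks to the fact that $S$ is a tree attached through a single bridge to the rest of the graph, so no new handles are required to accommodate it; the rest of the proof is a direct transcription of the argument of Theorem \ref{thm:mainXiTheorem}, with Lemma \ref{mcd4a} substituted for Lemma \ref{mcd2a}.
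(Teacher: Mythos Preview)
Your proposal is correct and follows essentially the same route as the paper: the paper's proof is the single line ``Since $S$ is a tree, from Lemmas \ref{mcd4a} and \ref{cor:xiMates} we get that $\xi$ is weakly distinguishing on $\cC_k$.'' You are simply more explicit than the paper in checking that the re-rooted mate $G_u$ still lies in $\cC_k$, which is indeed required by the hypothesis of Lemma \ref{mcd4a} but is left implicit there.
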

\begin{proof}
Since $S$ is a tree, from lemmas \ref{mcd4a} and \ref{cor:xiMates} we get that $\xi$ is weakly distinguishing on $\cC_k$.
\end{proof}

\section{$\xi$-invariants}
\label{se:applications}

In this section we investigate consequences of Theorems \ref{thm:mainXiTheorem} and \ref{thm:XiGenus} 
by using Lemma \ref{le:dp}.

\ifskip\else
\begin{definition}
Let $\cC$ be a graph property, $P$ and $Q$ be two graph polynomials and $G$ and $H$ two finite graphs.
\begin{enumerate}[(i)]
\item
$G$ and $H$ are {\em similar} if they have the same number of vertices, edges and connected components.
\item
$P <_{d.p}^{\cC} Q$ or
{\em  $Q$ is at least as distinctive as $P$ in $\cC$}
if
for all graphs $G,H \in \cC$, $Q(G;\bar{x})=Q(H;\bar{x})$ implies $P(G;\bar{y})=P(H;\bar{y})$.
\item
$P \sim_{d.p}^{\cC} Q$ or
{\em $P$ and $Q$ are of the same distinctive power in $\cC$} if
$P <_{d.p}^{\cC} Q$ and $Q <_{d.p}^{\cC} P$. 
\item
$P <_{s.d.p}^{\cC} Q$ or
{\em $P$ and $Q$ are of the same distinctive power in $\cC$ on similar graphs} if
if
for all similar graphs $G,H \in \cC$, $Q(G;\bar{x})=Q(H;\bar{x})$ implies $Q(G;\bar{y})=Q(H;\bar{y})$.
\item
$P \sim_{s.d.p}^{\cC} Q$  if
$P <_{s.d.p}^{\cC} Q$ and $Q <_{s.d.p}^{\cC} P$. 
\end{enumerate}
If $\cC$ consists of all finite graphs, we omit it.
\end{definition}
We note that
$P <_{d.p}^{\cC} Q$  implies $P <_{s.d.p}^{\cC} Q$.

We use the following observation:
\begin{proposition}
\label{obs:wd}
Let $P(G; \bar{x})$
and $Q(G; \bar{y})$ two graph polynomials and $\cC$ and $\cD$ be graph properties with $\cD \subseteq \cC$.
Assume that 
$P <_{s.d.p}^{\cC} Q$ 
and $Q$ is weakly distinguishing in $\cC$. 
Then also $P$ 
is weakly distinguishing in $\cC$ but not necessarily in $\cD$.
\end{proposition}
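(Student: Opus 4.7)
The strategy is to translate the distinctive-power hypothesis into a set inclusion between the classes of unique graphs, and then read off the conclusion from a one-line counting argument.

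First I would argue, by contraposition, that $U_{P, \cC}(n) \subseteq U_{Q, \cC}(n)$. Take $G \in \cC(n)$ that is not $Q$-unique in $\cC$, so there is a non-isomorphic $H \in \cC$ with $Q(G;\bar{x}) = Q(H;\bar{x})$. In the applications of interest -- and in particular whenever $Q$ determines order, edge count and component number -- the graphs $G$ and $H$ are automatically similar, so the hypothesis $P <_{s.d.p.}^{\cC} Q$ gives $P(G;\bar{y}) = P(H;\bar{y})$, and $H$ witnesses that $G$ is not $P$-unique in $\cC$ either.

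From this inclusion, dividing by $|\cC(n)|$ and passing to the limit yields
$$\lim_{n \goesto \infty} \dfrac{|U_{P,\cC}(n)|}{|\cC(n)|} \;\leq\; \lim_{n \goesto \infty} \dfrac{|U_{Q,\cC}(n)|}{|\cC(n)|} \;=\; 0,$$
where the right-hand equality is the hypothesis that $Q$ is weakly distinguishing in $\cC$. This is exactly the required statement that $P$ is weakly distinguishing in $\cC$.

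For the second clause, I would give a short sanity-check example showing that weak distinguishability need not descend to a sub-property $\cD$. If $\cD \subseteq \cC$ is very thin, for instance $\cD = \{K_n : n \in \NN\}$ sitting inside the property of all finite graphs, then for each $n$ there is exactly one graph of order $n$ in $\cD$, so $U_{P,\cD}(n) = \cD(n)$ and the corresponding limit equals $1$, not $0$. The mildly subtle point is the similarity clause in the contrapositive step: because $<_{s.d.p.}^{\cC}$ is only assumed to hold on similar pairs, the argument relies on verifying that $Q$-mates are similar. This is immediate for every polynomial to which Lemma \ref{le:dp} is applied in this paper, since in each case $Q$ records the number of vertices, edges and connected components, and so is a standing background fact rather than an obstacle.
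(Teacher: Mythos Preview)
Your approach is the same as the paper's: establish the inclusion $U_{P,\cC}(n)\subseteq U_{Q,\cC}(n)$ and divide through by $|\cC(n)|$. The paper's proof simply asserts the inclusion as ``clear'' and writes down the resulting inequality (with a typo: the displayed inequality there is reversed, and the denominator should be $|\cC(n)|$ rather than $|\mathcal{G}(n)|$).

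Your version is in fact more careful than the paper's in two places. First, you correctly flag that $<_{s.d.p.}^{\cC}$ only quantifies over \emph{similar} pairs, so the contrapositive step needs the $Q$-mate $H$ to be similar to $G$; you then observe that this is automatic whenever $Q$ determines order, size and component number, which covers every use of the lemma in the paper (in particular $Q=\xi$). The paper glosses over this point entirely. Second, you supply an explicit example (the property $\{K_n\}$) for the clause ``but not necessarily in $\cD$'', which the paper's proof does not address at all.
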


\begin{proof}
Clearly, $P <_{s.d.p}^{\cC} Q$ implies that
$U_P(n) \subseteq U_Q(n)$.
Hence
$$
\lim_{n\goesto \infty} \dfrac{|U_Q(n)|}{|\mathcal{G}(n)|}
\leq
\lim_{n\goesto \infty} \dfrac{|U_P(n)|}{|\mathcal{G}(n)|}.
$$
\end{proof}
\fi 

From Theorem
\ref{thm:mainXiTheorem}
we get:

\begin{corollary}
Let $\mathcal{A}$ be a small addable graph property such that $S\in \aA$, 
and let $P$ be a graph polynomial such that 
$P <_{s.d.p}^{\cC} \xi$.
Then $P$ is weakly distinguishing on $\mathcal{A}$.
\end{corollary}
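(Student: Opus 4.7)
The plan is to combine the two main results already established in this section: Theorem \ref{thm:mainXiTheorem}, which says that $\xi$ itself is weakly distinguishing on any small addable property $\mathcal{A}$ containing $S$, and Lemma \ref{le:dp}, the transfer lemma that moves weak distinguishability from a more distinctive graph polynomial to a less distinctive one.

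First I would apply Theorem \ref{thm:mainXiTheorem} to the given $\mathcal{A}$: since $\mathcal{A}$ is small, addable, and contains $S$, this immediately yields $|U_{\xi,\mathcal{A}}(n)|/|\mathcal{A}(n)| \to 0$, i.e.\ $\xi$ is weakly distinguishing on $\mathcal{A}$.

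Second I would invoke Lemma \ref{le:dp} with $Q=\xi$ and $\mathcal{C}=\mathcal{A}$ (reading the $\mathcal{C}$ appearing in the corollary's hypothesis $P <_{s.d.p}^{\mathcal{C}} \xi$ as the property $\mathcal{A}$ under consideration). Both hypotheses of the lemma are in place: the distinctive-power comparison is given, and Step 1 supplies weak distinguishability of $\xi$ on $\mathcal{A}$. The lemma then hands us the conclusion that $P$ is weakly distinguishing on $\mathcal{A}$.

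The proof is essentially just this composition, and there is no genuine obstacle. The one point worth verifying is that the $s.d.p.$ hypothesis is strong enough to force the inclusion $U_{P,\mathcal{A}}(n) \subseteq U_{\xi,\mathcal{A}}(n)$ used inside Lemma \ref{le:dp}: this is clean because the covered-components representation $C(G;x,y,z)$ determines $|V(G)|$, $|E(G)|$, and the number of connected components of $G$ (read off from degrees in $x$ and $y$ and from the top monomial, and transferred to $\xi$ via Corollary \ref{cor:xiDistC}), so any two $\xi$-mates are automatically similar; the $s.d.p.$ hypothesis then promotes every $\xi$-mate of $G$ to a $P$-mate as well. All the real work happens upstream, in Lemma \ref{lemma:CMates} (which produces the $\xi$-mates by swapping the root on the pseudosimilar vertices of $S$) and the McDiarmid--Steger--Welsh machinery of Theorem \ref{mcd} and Corollary \ref{mcd2}.
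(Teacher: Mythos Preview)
Your proposal is correct and matches the paper's approach exactly: the paper states this corollary immediately after ``From Theorem~\ref{thm:mainXiTheorem} we get:'' and the whole section is framed as applying Lemma~\ref{le:dp} to Theorems~\ref{thm:mainXiTheorem} and~\ref{thm:XiGenus}, with no further argument given. Your additional remark that $\xi$-mates are automatically similar (so that the $s.d.p.$ hypothesis really does yield $U_{P,\mathcal{A}}(n)\subseteq U_{\xi,\mathcal{A}}(n)$) fills in a point the paper leaves implicit in its proof of Lemma~\ref{le:dp}.
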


\begin{corollary}
Let $\mathcal{A}$ be a proper minor closed addable graph property, 
and let $P$ be a graph polynomial such that 
$P <_{s.d.p}^{\cC} \xi$.
Then $P$ is weakly distinguishing on $\mathcal{A}$.
\end{corollary}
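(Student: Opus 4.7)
The plan is to reduce the statement to the preceding corollary, which already handles the case of small addable graph properties containing the tree $S$ of Figure \ref{pseudosimilar}. Two items need to be verified about $\aA$: that it is small, and that $S\in\aA$.

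First I would invoke Theorem \ref{NSTW} (Norine--Seymour--Thomas--Wollan): a proper minor closed graph property is automatically small. This supplies, free of charge, the smallness hypothesis that the preceding corollary imposed by hand. Next I would appeal to the remark made just after the characterization of addable minor closed classes in terms of $2$-connected excluded minors: any non-empty, minor closed, addable graph property contains all forests (because it contains $K_1$ and is closed under disjoint unions and under adding bridges between components). Since $S$ is a tree, it follows that $S\in\aA$.

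Having verified both hypotheses of the preceding corollary, I would invoke that corollary to conclude that $P$ is weakly distinguishing on $\aA$. Equivalently, the chain is: Corollary \ref{cor:xiMates} (constructing $\xi$-mates from pendant appearances of $S$) combined with Lemma \ref{mcd2a} gives $\xi$ weakly distinguishing on $\aA$ (Theorem \ref{thm:mainXiTheorem}), and then Lemma \ref{le:dp} transfers this property from $\xi$ to any $P$ with $P<_{s.d.p}^{\cC}\xi$.

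There is no real obstacle: the proof is bookkeeping that invokes previously established machinery. The only point worth double-checking is that the specific tree $S$ used to build $\xi$-mates is indeed available in every proper minor closed addable class, which is guaranteed by the forest-containment observation above.
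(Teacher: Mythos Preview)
Your proposal is correct and matches the paper's approach. The paper states this corollary without an explicit proof (it is listed directly under ``From Theorem \ref{thm:mainXiTheorem} we get''), but the intended argument is exactly the one you give: Theorem \ref{NSTW} supplies smallness, the forest-containment remark gives $S\in\aA$, and then Theorem \ref{thm:mainXiTheorem} together with Lemma \ref{le:dp} finishes the job---precisely mirroring the proof of the analogous corollary in Section \ref{se:xi}.
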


From Theorem
\ref{thm:XiGenus}
we get:

\begin{corollary}
\label{cor:XiGenus}
Let $\cC_k$ the class of graphs of genus less than $k$, 
and let $P$ be a graph polynomial such that
$P <_{s.d.p}^{\cC} \xi$.
Then $P$ is weakly distinguishing on $\cC_k$ for all $k\in \NN$.
\end{corollary}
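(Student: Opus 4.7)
The plan is to deduce Corollary \ref{cor:XiGenus} as an immediate application of Lemma \ref{le:dp} (the observation \ref{obs:wd}) together with Theorem \ref{thm:XiGenus}. Specifically, I would take $Q = \xi$ and $\cC = \cC_k$ in Lemma \ref{le:dp}: Theorem \ref{thm:XiGenus} supplies the required input, namely that $\xi$ is weakly distinguishing on $\cC_k$, and the hypothesis $P <_{s.d.p}^{\cC} \xi$ of the corollary is precisely the comparison of distinctive power that Lemma \ref{le:dp} asks for. The conclusion then reads off as ``$P$ is weakly distinguishing on $\cC_k$''.

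Before invoking Lemma \ref{le:dp}, I would briefly check that the ``similarity'' technicality in the definition of $<_{s.d.p}^{\cC}$ does not cause trouble. The $\xi$-mates produced in the proof of Theorem \ref{thm:XiGenus} are pairs $(G_v, G_u)$ obtained by relocating the root of a pendant appearance of the tree $S$ of Figure \ref{pseudosimilar}; such a pair manifestly has the same number of vertices, the same number of edges, and the same number of connected components, so $G_v$ and $G_u$ are similar in the sense of the definition. More structurally, the recurrence \eqref{eq:xirec} already encodes $|V(G)|$, $|E(G)|$ and the number of components in $\xi(G;x,y,z)$, so any two graphs with equal $\xi$-polynomial are automatically similar; thus the assumption $P <_{s.d.p}^{\cC} \xi$ transports the equality $\xi(G_v) = \xi(G_u)$ to $P(G_v) = P(G_u)$ with no loss.

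Putting the two ingredients together, every $G \in \cC_k(n)$ that has a $\xi$-mate in $\cC_k$ also has a $P$-mate in $\cC_k$, which gives $U_{P,\cC_k}(n) \subseteq U_{\xi,\cC_k}(n)$, and Theorem \ref{thm:XiGenus} then forces $|U_{P,\cC_k}(n)|/|\cC_k(n)| \to 0$. No new construction of mates is needed; the substantive combinatorial work was already done in Theorem \ref{thm:XiGenus}. The only genuine ``obstacle'' — and it is mild — is the bookkeeping around the similarity clause in the definition of $<_{s.d.p}$, which is why I would make the observation about $\xi$ recording $|V|$, $|E|$ and $k(G)$ explicit before the one-line appeal to Lemma \ref{le:dp}.
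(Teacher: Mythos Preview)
Your proposal is correct and follows exactly the paper's intended route: the corollary is stated immediately after ``From Theorem \ref{thm:XiGenus} we get'' in Section \ref{se:applications}, with the understanding that one applies Lemma \ref{le:dp} with $Q=\xi$ and $\cC=\cC_k$. Your extra remark that $\xi$ already determines $|V(G)|$, $|E(G)|$ and $k(G)$ (so $\xi$-mates are automatically similar) is a welcome clarification that the paper leaves implicit in its one-line proof of Lemma \ref{le:dp}.
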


In the sequel of this section we collect results from the literature which show that
there are many graph polynomials $P$ where we can apply this.
Among them we find the
\begin{enumerate}[(i)]
\item
generalized chromatic polynomial from \cite{Dohmen-etal},
including the chromatic polynomial;
\item
the matching polynomials; 
\item
independence polynomial,
including the vertex cover polynomial;
\item
Tutte polynomial,
including the flow polynomial, the reliability polynomial and the Euler polynomial.
\end{enumerate}

\subsection{The generalized chromatic polynomial}
The generalized chromatic polynomial $GC(G;x,y)$ was introduced in \cite{Dohmen-etal}.
There are two disjoint sets of colors $Y$, the set of proper colors, and $Z$. 
A generalized coloring of a graph $G = (V , E )$ is a map $c : V \rightarrow (Y \sqcup Z )$ 
such that for all
$(u, v) \in E$, if $c(u) \in Y$ and $c(v) \in Y$ , then $c(u) = c(v)$. 
For two positive integers $x > y$, the value of the polynomial $GC(G;x,y)$ is the number of generalized colorings
by $x$ colors, where $y$ of them are proper. 
The chromatic polynomial $\chi(G;x)$ is obtained for the case $x=y$.

\begin{theorem}[{\cite[Proposition 22]{averbouch2008extension}}]
For all graphs $G$
$$GC(G;x,y) = \xi(G; x-1, x-y).$$
Therefore $\chi <_{d.p}^{\cC} CG <_{d.p}^{\cC} \xi$.
\end{theorem}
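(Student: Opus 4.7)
The plan is to prove the identity $GC(G;x,y) = \xi(G;x-1,x-y)$ by induction on $|E(G)|$, verifying that both sides satisfy the same deletion/contraction/extraction recurrence with matching base cases. Since $\xi$ is uniquely determined by its recurrence, its multiplicativity under disjoint union, and the base values $\xi(\emptyset) = 1$ and $\xi(K_1) = x$, it suffices to establish the corresponding recurrence and base values for $GC$ and then compare the $\xi$-recurrence after the substitution $y \mapsto x-1$, $z \mapsto x-y$.

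To derive the recurrence for $GC$, I would fix an edge $e=(u,v)$ and partition the generalized colorings $c$ of $G$ according to the types of $c(u)$ and $c(v)$ (proper, in $Y$, versus improper, in $Z$). Colorings with $c(u) = c(v) \in Y$ are in bijection with colorings of $G/e$ whose merged vertex sits in $Y$, contributing a factor tied to the number of proper colors; colorings with both $c(u), c(v) \in Z$ are in bijection with colorings of $G\dagger e$ equipped with a free improper-color choice at each deleted endpoint, contributing a factor tied to $|Z| = x-y$; the remaining colorings (those vacuously compatible with the edge constraint) are captured by $GC(G-e;x,y)$. After the bookkeeping one arrives at
\[
GC(G;x,y) = GC(G-e;x,y) + (x-1)\,GC(G/e;x,y) + (x-y)\,GC(G \dagger e;x,y),
\]
which is precisely the $\xi$-recurrence under the substitution $y \mapsto x-1$, $z \mapsto x-y$. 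Combined with $GC(\emptyset;x,y) = 1$, $GC(K_1;x,y) = x$, and the evident multiplicativity of $GC$ under disjoint unions, the induction closes.

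The main obstacle is the combinatorial accounting that produces exactly the coefficients $x-1$ and $x-y$ rather than naive counts such as $y$ or $(x-y)^2$. The $-1$ in $x-1$ reflects an inclusion--exclusion correction: the contraction term must not double-count colorings already accounted for in $GC(G-e;x,y)$, and this correction propagates into the improper-color weight on the $G \dagger e$ term. Once the identity is established, the chain $\chi <_{d.p}^{\cC} GC <_{d.p}^{\cC} \xi$ is immediate from the substitutions: evaluating at $(y,z) = (x-1, x-y)$ witnesses $GC <_{d.p}^{\cC} \xi$, and a further specialization of $GC$ recovers the chromatic polynomial, witnessing $\chi <_{d.p}^{\cC} GC$; applying Lemma \ref{le:dp} then transports the weak distinguishability of $\xi$ on addable small properties and on $\cC_k$ to both $GC$ and $\chi$.
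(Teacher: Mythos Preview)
The paper does not prove this theorem; it simply cites \cite[Proposition~22]{averbouch2008extension}. Your plan to verify the identity by showing that $GC$ satisfies the defining recurrence of $\xi$ (with the appropriate substitution) and the same base values is exactly the natural argument, and it is essentially how the result is established in the cited source.

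However, the recurrence you write down is wrong. The displayed identity in the statement contains a typographical slip: $\xi(G;x-1,x-y)$ should read $\xi(G;x,-1,x-y)$ (a comma is missing between $x$ and $-1$; recall $\xi$ has three variables). Consequently the correct recurrence is
\[
GC(G;x,y)=GC(G-e;x,y)\;-\;GC(G/e;x,y)\;+\;(x-y)\,GC(G\dagger e;x,y),
\]
with coefficient $-1$ on the contraction term, not $x-1$. You can see this already on $K_2$: direct counting gives $GC(K_2;x,y)=x^2-y$, whereas your recurrence would yield $x^2+(x-1)x+(x-y)=2x^2-y$. Combinatorially, $GC(G-e)$ overcounts $GC(G)$ by exactly those colorings with $c(u)=c(v)\in Y$; these biject with colorings of $G/e$ whose merged vertex lies in $Y$, and $GC(G/e)$ minus that quantity equals the colorings of $G/e$ with merged vertex in $Z$, which is $(x-y)\,GC(G\dagger e)$. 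Rearranging gives the coefficient $-1$, and no inclusion--exclusion correction of the kind you describe is needed. With this fix your induction goes through, and the $d.p.$ chain $\chi<_{d.p}^{\cC} GC<_{d.p}^{\cC}\xi$ follows exactly as you say.
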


\subsection{The matching polynomial}
\label{se:applications-matching}
There are several variants of the matching polynomial considered in the literature:
\begin{definition}
\label{def:matching}
Let $G$ be a graph with $n$ vertices. 
A {\em matching} in $G$ is a spanning subgraph of $G$ in which every connected component 
is either an isolated vertex or two vertices connected by a single edge. 
We say a matching is of size $k$ if it has exactly $k$ edges. 
Denote by $m_k(G)$ the number of $k$ matchings in $G$.\\

The {\em matching acyclic polynomial} (also known as the matching defect polynomial) 
is defined as $$\mu(G;x)=\sum_{k=0}^{\lfloor n/2 \rfloor}(-1)^km_k(G)x^{n-2k}$$
The {\em matching generating polynomial} is defined as $$g(G;x)=\sum_{k=0}^{\lfloor n/2 \rfloor} m_k(G)x^k$$
The {\em bivariate matching polynomial} of $G$ is defined as 
$$M(G;w_1,w_2)=\sum_{k=0}^{\lfloor n/2 \rfloor}m_k(G)w_1^{n-2k}w_2^k$$
\end{definition}
For an introduction to the bivariate matching polynomial, 
see \cite{farrell1979introduction}. 
For a recent survey on the acyclic matching polynomial, see \cite{gutman2016survey}.\\
For our purposes, we note the following fact:
\begin{fact}
\label{fact:matching pols}
All of the polynomials in definition \ref{def:matching} are of the same distinctive power on similar graphs:. 
$$
\mu \sim_{s.d.p} g \sim_{s.d.p} M.
$$
In fact, we also have $\mu \sim_{d.p}  M$, but for the edgless graphs $E_n$ of order $n$ we have 
$g(E_n,x) = g(E_m)$ for all $m,n \geq 1$, but $\mu(E_n) \neq \mu(E_m)$.
\end{fact}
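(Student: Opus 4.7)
The plan is to observe that all three polynomials package essentially the same data --- the matching number sequence $(m_k(G))_{k\geq 0}$ --- and the only subtlety is whether the vertex count $n = |V(G)|$ can be recovered from the polynomial itself. The entire fact is a bookkeeping argument about which coefficients reveal which quantities.

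First I would handle $\mu \sim_{d.p} M$. From $M(G;w_1,w_2)$, the pure-$w_1$ monomial is $m_0(G)\, w_1^n = w_1^n$, which reveals $n$; then each $m_k(G)$ is read off as the coefficient of $w_1^{n-2k} w_2^k$, and feeding these values into the definition of $\mu$ recovers it. Conversely, $\mu(G;x)$ has degree exactly $n$ (its leading coefficient is $m_0 = 1$), so $n = \deg \mu$, and each $m_k(G)$ equals $(-1)^k$ times the coefficient of $x^{n-2k}$, which reconstructs $M$. Thus $\mu \sim_{d.p} M$, and by item (vi) of the definition of $<_{s.d.p}$, also $\mu \sim_{s.d.p} M$.

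Next I would argue $g \sim_{s.d.p} M$. The substitution $w_1 = 1$, $w_2 = x$ in $M$ yields $g$, so $g <_{d.p} M$ holds for all graphs. For the converse on similar graphs: $g(G;x)$ has coefficient of $x^k$ equal to $m_k(G)$, and when $G,H$ are similar they share the same $n$, so the sequence $(m_k)$ together with the known common $n$ reconstructs $M$ directly from its definition. This gives $g \sim_{s.d.p} M$ and, chained with the first step, $g \sim_{s.d.p} \mu$. Finally, the strictness claim $g \not\sim_{d.p} \mu$ is witnessed by the edgeless graphs $E_n$: since $m_0(E_n) = 1$ and $m_k(E_n) = 0$ for $k \geq 1$, one has $g(E_n;x) = 1$ for every $n \geq 1$, while $\mu(E_n;x) = x^n$ plainly distinguishes different orders. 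I do not anticipate any real obstacle; the only minor point to be careful about is to use $m_0 = 1$ when extracting $n$ from the degree of $\mu$ or from the $w_1$-pure monomial of $M$.
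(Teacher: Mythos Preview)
Your argument is correct. The paper states this as a \emph{fact} without proof, so there is no ``paper's own proof'' to compare against; your verification is exactly the straightforward bookkeeping one would expect, recovering $n$ and the sequence $(m_k(G))$ from each polynomial and noting that $g$ alone loses $n$ while $\mu$ and $M$ retain it.
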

Thus we will only consider the bivariate matching polynomial.

\begin{theorem}[{\cite[Proposition 20]{averbouch2008extension}}]
For all graphs $G$
$$M(G;x,y) = \xi(G; x, 0, y),$$
therefore $M <_{s.d.p} \xi$.
\end{theorem}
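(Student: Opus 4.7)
The plan is to prove the polynomial identity $M(G;x,y) = \xi(G;x,0,y)$ by induction on $|E(G)|$, and then to deduce $M <_{s.d.p.} \xi$ by formal substitution. For the base cases I would verify $M(\emptyset;x,y) = 1 = \xi(\emptyset;x,0,y)$ and $M(K_1;x,y) = x = \xi(K_1;x,0,y)$ directly from the definitions. Both $M$ (since the number of $k$-matchings in $G \sqcup H$ is the convolution $\sum_{i+j=k} m_i(G)m_j(H)$ and the exponent $n-2k$ splits as $(n_G - 2i) + (n_H - 2j)$) and $\xi$ (by its defining multiplicativity) are multiplicative over disjoint unions, so the identity passes through disjoint unions automatically.

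For the inductive step, fix any edge $e = uv$ in $G$ and set $y = 0$ in the defining recurrence \eqref{eq:xirec} of $\xi$, which kills the contraction term and leaves
$$\xi(G;x,0,z) = \xi(G-e;x,0,z) + z\,\xi(G \dagger e;x,0,z).$$
On the matching side, I would partition the $k$-matchings of $G$ according to whether they contain $e$: those that do not are exactly the $k$-matchings of $G-e$, while those that do correspond bijectively to the $(k-1)$-matchings of $G \dagger e = G - \{u,v\}$. Translating these counts into the generating function and tracking that removing the two endpoints simultaneously decreases the vertex count by $2$ and the matching index by $1$ (so the $w_1$-degree rebalances and exactly one extra factor of $w_2$ is picked up), I obtain
$$M(G;w_1,w_2) = M(G-e;w_1,w_2) + w_2\,M(G \dagger e;w_1,w_2),$$
which, under the identification $x \leftrightarrow w_1$ and $y \leftrightarrow w_2$, is precisely the $y=0$ specialization of the $\xi$-recurrence. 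Since $G-e$ and $G \dagger e$ have strictly fewer edges, induction closes the argument.

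The distinctive-power statement is then immediate: if $\xi(G;x,y,z) = \xi(H;x,y,z)$, substituting $y = 0$ and renaming $z$ to $y$ in the identity yields $M(G;x,y) = M(H;x,y)$, so in fact $M <_{d.p.} \xi$, which implies $M <_{s.d.p.} \xi$ since $<_{d.p.}$ refines $<_{s.d.p.}$ by definition. There is no real obstacle in the argument; the only point that deserves careful bookkeeping is the matching of variable roles and the simultaneous shifts in the $w_1$-exponent and the matching index when passing to $G \dagger e$, which is what guarantees that the $z\,\xi(G \dagger e)$ term on the $\xi$ side and the $w_2\,M(G \dagger e)$ term on the $M$ side agree exponent by exponent.
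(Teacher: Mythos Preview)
Your argument is correct. The paper does not actually supply its own proof of this statement: it is quoted from \cite{averbouch2008extension} and used as a black box, so there is no in-paper proof to compare against. That said, your approach---verifying that $M$ satisfies the specialization of the $\xi$-recurrence obtained by setting the contraction variable to $0$, together with the common base cases and multiplicativity---is exactly the standard route, and is the method used in \cite{averbouch2008extension} to identify substitution instances of $\xi$. Your bookkeeping on the matching side (the decomposition $m_k(G)=m_k(G-e)+m_{k-1}(G\dagger e)$ and the resulting shift $w_1^{n-2k}w_2^{k}\mapsto w_2\cdot w_1^{(n-2)-2(k-1)}w_2^{k-1}$) is accurate, and the final implication $M<_{d.p.}\xi\Rightarrow M<_{s.d.p.}\xi$ is immediate from the definitions as you say.
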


\subsection{Independence and clique polynomial}
\label{se:applications-independence}
The independence polynomial is defined as
$$
In(G;x) = \sum_{A \subseteq E(G)}  x^{|A|}
$$
where the graph induced graph $G[A]$ is edgless.

The vertex cover polynomial $V(G;x)$ is defined as
$$
VC(G;x) = \sum_{A \subseteq E(G)}  x^{|A|}
$$
where $A$ is a vertex cover of $G$.

The following is taken from (but possibly not originally due to) \cite{trinks2011covered}.

\begin{proposition}
For all graphs $G$ we have
\begin{enumerate}[(i)]
\item
$In(G;x) = GC(G; x+1, 1)$
\item
$VC(G;x) = x^n In((G; \frac{1}{x})$
\end{enumerate}
Hence, $VC \sim_{s.d.p} In <_{s.d.p} GC$.
\end{proposition}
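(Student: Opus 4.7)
The plan is to verify each identity by direct combinatorial unfolding of the definitions, and then to read off the distinctive-power inequalities from the resulting substitutions, paying attention to whether the number of vertices $n$ enters (this is precisely what forces the $s.d.p.$ qualifier in one case rather than the full $d.p.$).

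For (i), I would expand $GC(G; x+1, 1)$ according to its combinatorial meaning: there are $x+1$ colors of which exactly one, call it $\star$, is proper. In any admissible coloring the set $S$ of vertices receiving $\star$ must be independent, since two adjacent vertices cannot share a proper color and only one proper color is available. Conversely, given any independent $S \subseteq V(G)$, we may color $S$ with $\star$ and each of the $n - |S|$ remaining vertices freely with one of $x$ non-proper colors with no constraint whatsoever. Summing,
\[
GC(G; x+1, 1) \;=\; \sum_{\substack{S \subseteq V(G) \\ S \text{ independent}}} x^{n - |S|},
\]
which matches $In(G;x)$ in the convention used in \cite{trinks2011covered} (equivalently, equals $x^n\, In(G;1/x)$ in the convention $In(G;x)=\sum_S x^{|S|}$; either way each polynomial is a fixed substitution of the other).

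For (ii), I would use the elementary bijection $C \mapsto V(G) \setminus C$ between vertex covers and independent sets: a set $C$ covers every edge if and only if its complement contains no edge. Reparametrising the defining sum of $VC(G;x)$ by $S := V(G) \setminus C$ gives
\[
VC(G; x) \;=\; \sum_{C \text{ vertex cover}} x^{|C|} \;=\; \sum_{S \text{ independent}} x^{n - |S|} \;=\; x^n\, In\!\left(G;\, \tfrac{1}{x}\right),
\]
which is exactly (ii).

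The comparison statements then drop out. Identity (i) writes $In$ as a fixed substitution of $GC$ with no dependence on the graph, giving $In <_{d.p.} GC$ and \emph{a fortiori} $In <_{s.d.p.} GC$. Identity (ii) writes $VC$ as a substitution of $In$, but the substitution uses the exponent $n = |V(G)|$, so one cannot conclude $<_{d.p.}$ directly. However, if $G$ and $H$ are similar, then $|V(G)| = |V(H)| = n$ and the identity is invertible in $x$ (multiplication by $x^n$ after $x \mapsto 1/x$ is a bijection of polynomials of degree $\leq n$), yielding simultaneously $VC <_{s.d.p.} In$ and $In <_{s.d.p.} VC$, hence $VC \sim_{s.d.p.} In$. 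I do not expect any serious obstacle here; the only subtle point is keeping track of the factor $x^n$ in (ii), which is exactly what prevents upgrading that comparison from $<_{s.d.p.}$ to $<_{d.p.}$.
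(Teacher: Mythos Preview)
The paper does not actually prove this proposition; it simply attributes it to \cite{trinks2011covered}. Your direct combinatorial unfolding is the standard route and is correct, including the observation that with exactly one proper color the properly-colored set must be independent, and the complement bijection between vertex covers and independent sets for (ii).

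One small tightening. In the paper's own convention $In(G;x)=\sum_{S\text{ indep}}x^{|S|}$, your computation gives $GC(G;x+1,1)=\sum_S x^{\,n-|S|}=x^n\,In(G;1/x)$, so the substitution in (i) \emph{does} depend on $n$, contrary to your later sentence ``with no dependence on the graph.'' This does not affect the stated conclusion $In<_{s.d.p.}GC$, and in fact even $In<_{d.p.}GC$ survives once you note that $GC$ determines $n$ (for instance $GC(G;x,0)=x^n$, since with no proper colors every map is a generalized coloring); but your justification as written only literally works in the alternative convention you mention. Your treatment of (ii) and of why the $x^n$ factor forces $\sim_{s.d.p.}$ rather than $\sim_{d.p.}$ is exactly right.
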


The clique polynomial is defined as
$$
Cl(G;x) = \sum_{A \subseteq E(G)}  x^{|A|}
$$
where the graph induced graph $G[A]$ is a complete graph.

We note that 
$In(G;x) = Cl(\bar{G})$ for simple graphs.
Therefore $In \sim_{d.p} Cl$ by Example \ref{ex:dp}(iii).

Both, $In(G;x)$ and $Cl(G;x)$ were shown in \cite{rakita2019weakly} to be weakly distinguishing on all finite graphs.
In the light of the above, $In(G;x)$ is also weakly distinguishing on small addable graph properties,
and on graphs of genus at most $k$.

This also holds
if $\cC$ is addable, small, and closed under complements.
If $\cC$ is addable and small, but not closed under complements 
we can use the following lemma:
\begin{figure}[h!]
	\caption{A path of length 3}
	\centering
	\includegraphics[scale=0.75]{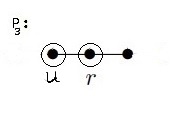}
	\label{p3}
\end{figure}
\begin{lemma}
\label{lemma:ClOnAddable}
Let $G_r$ be a graph such that $G$ has a pendant appearance of $P_3$ (see Figure \ref{p3}) rooted at $r$, and $G_u$ the graph obtained from $G_r$ by replacing the pendant appearance of $P_3$ by a pendant appearance of $P_3$ rooted at $u$. Then $G_r$ and $G_u$ are clique mates.
\end{lemma}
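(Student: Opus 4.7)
The plan is to compare $Cl(G_r;x)$ and $Cl(G_u;x)$ directly by partitioning the cliques of each graph according to their intersection with the pendant copy of $P_3$ and verifying term-by-term that the two partitions yield identical totals. First I would fix notation: let $H$ denote the induced subgraph on the non-pendant vertices, which is the same in $G_r$ and in $G_u$, and let $w \in V(H)$ be the unique vertex of $H$ incident with the edge joining $H$ to the pendant.

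The crucial structural observation I would establish is the following. If $A$ is a clique of $G_r$ (respectively $G_u$) that meets both $V(H)$ and $V(P_3)$, then every pendant vertex in $A$ must be adjacent, in $G_r$ (resp.\ $G_u$), to every non-pendant vertex in $A$. Since by the definition of a pendant appearance the only edge joining $V(P_3)$ to $V(H)$ is the one from the connecting vertex (either $r$ or $u$) to $w$, this forces $A \cap V(H) \subseteq \{w\}$ and $A \cap V(P_3) \subseteq \{\text{connecting vertex}\}$. Consequently the \emph{only} mixed clique in $G_r$ is $\{w,r\}$, and the only mixed clique in $G_u$ is $\{w,u\}$; each contributes $x^2$.

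Then I would partition the remaining cliques of either graph into two further classes: those contained in $V(H)$, which contribute $Cl(H;x)$ on both sides, and the non-empty cliques contained in $V(P_3)$, which contribute $Cl(P_3;x)-1 = 3x + 2x^2$ on both sides, since $P_3$ as an unrooted graph is the same in both cases and has six cliques (the empty set, three singletons, and its two edges). Adding up the three contributions gives $Cl(G_r;x) = Cl(H;x) + 3x + 3x^2 = Cl(G_u;x)$, as required.

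The one step needing care is the mixed-clique argument when the root is the middle vertex of $P_3$. In that case the root has two pendant neighbors, so a priori one might worry about a mixed clique of the form $\{w,\text{root},\text{second pendant neighbor}\}$. Such a set would require the second pendant neighbor to be adjacent to $w$ as well, which is impossible because the pendant-appearance hypothesis allows only a single edge between $V(P_3)$ and $V(H)$. Once this sanity check is verified, the rest of the argument is routine bookkeeping.
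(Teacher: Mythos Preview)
Your argument is correct and is essentially the same direct counting as the paper's, just organized differently: the paper groups cliques by size (observing that for $k>2$ no $k$-clique can use a pendant vertex, while the $k=1,2$ counts coincide because $G_r$ and $G_u$ have the same number of vertices and edges), whereas you group cliques by location (inside $H$, inside the pendant $P_3$, or mixed) and obtain the same identity $Cl(G_r;x)=Cl(H;x)+3x+3x^2=Cl(G_u;x)$. Both routes rest on the same observation that the only clique touching both parts is the single bridging edge, so there is no substantive difference.
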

\begin{proof}
For all $k>2$, $G_r$ and $G_u$ have the same number of $k$ cliques (since no $k$ clique can include vertices from the pendant appearance), and $G$ and $G'$ have the same number of vertices and edges, and hence the same number of $1$ and $2$ cliques. So $Cl(G_r;x)=Cl(G_u;x)$.
\end{proof}

Thus, we have:
\begin{theorem}
\label{thm:mainCLTheorem}
Let $\mathcal{A}$ be a small addable graph property such that $P_3\in \mathcal{A}$. Then $Cl$ is weakly distinguishing on $\mathcal{A}$.
\end{theorem}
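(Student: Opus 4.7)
The plan is to apply Lemma \ref{mcd2a} with the fixed connected graph $H = P_3$ and the polynomial $P = Cl$. Since $\aA$ is small and addable and $P_3 \in \aA$, the task reduces to showing that every $G \in \aA$ containing a pendant appearance of $P_3$ admits a clique mate lying in $\aA$.

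The mate itself is supplied by Lemma \ref{lemma:ClOnAddable}. Given $G_r \in \aA$ with a pendant appearance of $P_3$ rooted at $r$, I would form $G_u$ by re-rooting that pendant at the vertex of $P_3$ in the other orbit of its automorphism group: the middle vertex if $r$ is an endpoint, and an endpoint otherwise. The graphs $G_r$ and $G_u$ are non-isomorphic because the degree of the root inside the pendant changes, yet Lemma \ref{lemma:ClOnAddable} gives $Cl(G_r;x) = Cl(G_u;x)$.

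The delicate step, and the main obstacle, is verifying that $G_u \in \aA$. I plan to proceed using addability: let $w$ denote the unique vertex of $G_r$ outside the pendant that is adjacent to $r$, and write $J$ for the graph obtained by deleting the three pendant vertices from $G_r$. Then $G_u$ can be reconstructed from the disjoint union $J \sqcup P_3$ by adding a single bridge from $w \in V(J)$ to the chosen root in the separate component $V(P_3)$. Given $J \in \aA$, decomposability combined with $P_3 \in \aA$ yields $J \sqcup P_3 \in \aA$, and bridge-addability then supplies $G_u \in \aA$. The bottleneck is therefore the membership $J \in \aA$, which is automatic in the minor-closed setting that powers all the applications in Example \ref{lis}; once this is in hand, Lemma \ref{mcd2a} applies and delivers the theorem.
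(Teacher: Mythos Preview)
Your approach is exactly the paper's: the published proof is the single sentence ``From Lemmas~\ref{mcd2a} and~\ref{lemma:ClOnAddable} we get that $Cl$ is weakly distinguishing on~$\aA$.'' You have merely unpacked it and, in doing so, put your finger on a point the paper leaves implicit---Lemma~\ref{mcd2a} requires the mate $G_u$ to lie in $\aA$, and addability alone does not furnish $J\in\aA$ (hence $G_u\in\aA$). The paper does not address this step here, nor in the parallel one-line proofs for $P_A$, $Dom$, and $\xi$; your observation that minor-closure fills it is precisely what makes all the listed applications go through. As for non-isomorphism, your claim can be made airtight: re-rooting converts two pendant vertices of degree~$2$ into one of degree~$1$ and one of degree~$3$ while leaving all other degrees unchanged, so $G_r$ and $G_u$ have distinct degree sequences.
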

\begin{proof}
From Lemmas \ref{mcd2a} and \ref{lemma:ClOnAddable} we get that $Cl$ is weakly distinguishing on $\aA$.
\end{proof}
\ifskip\else
\begin{corollary}
Let $\mathcal{A}$ be a proper minor closed addable graph property. Then $CL$ is weakly distinguishing on $\mathcal{A}$
\end{corollary}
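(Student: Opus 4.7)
The plan is to reduce this corollary directly to Theorem \ref{thm:mainCLTheorem}, which asserts that $Cl$ is weakly distinguishing on any small addable graph property containing $P_3$. So the task is simply to verify its two hypotheses, smallness and the containment $P_3 \in \mathcal{A}$, under the given assumptions that $\mathcal{A}$ is proper, minor closed, and addable.

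First, I would invoke Theorem \ref{NSTW} of Norine, Seymour, Thomas and Wollan to obtain smallness. Since $\mathcal{A}$ is proper and minor closed by hypothesis, this is immediate. Second, I would recall the observation made in Section \ref{se:addable} just after Example \ref{lis}: any nonempty minor closed addable graph property contains all forests. The brief justification is that $\mathcal{A}$ contains $K_1$, is closed under disjoint unions by decomposability, and is closed under adding a bridge between two components by bridge-addability; iterating these operations produces every tree, hence every forest, as an element of $\mathcal{A}$. Since $P_3$ is a tree, we conclude $P_3 \in \mathcal{A}$.

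With both hypotheses in hand, Theorem \ref{thm:mainCLTheorem} gives the conclusion. There is no genuine obstacle in this argument; it is the exact analogue of the corollaries already established earlier in the paper for $P_A$, $Dom$, and $\xi$ on proper minor closed addable properties, and the proof is structurally identical, using the same ingredients (Theorem \ref{NSTW} for smallness and the forest-containment observation for the pendant ``gadget'' graph).
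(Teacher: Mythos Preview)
Your proof is correct and follows exactly the paper's own argument: the paper likewise notes that all forests are in $\mathcal{A}$, hence $P_3\in\mathcal{A}$, and then applies Theorems \ref{thm:mainCLTheorem} and \ref{NSTW}.
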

\begin{proof}
Note that, since all forests are in $\aA$, $P_3\in \aA$, and hence from theorems \ref{thm:mainCLTheorem} and \ref{NSTW} $Cl$ is weakly distinguishing on $\mathcal{A}$. 
\end{proof}
\begin{corollary}
$Cl$ is weakly distinguishing on all the properties listed in example \ref{lis}.
\end{corollary}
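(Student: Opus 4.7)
The plan is to derive this corollary by applying Theorem \ref{thm:mainCLTheorem} to each of the seven families in Example \ref{lis}. That theorem has three hypotheses on the graph property $\mathcal{A}$: it must be small, it must be addable, and it must contain the path $P_3$. So the task reduces to checking these three conditions for each entry of the list.

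Addability is already asserted in Example \ref{lis} for every item, so the first point is immediate and requires no further work. For smallness, I would observe that each family on the list is proper minor closed: planar, outerplanar, series--parallel, bounded-tree-width, cycle-length bounded, $K_k$-minor-free, and $k$-colorable graphs each exclude at least one fixed graph (for instance a sufficiently large complete graph or long cycle) and are closed under the standard minor operations as used in this paper. Hence Theorem \ref{NSTW} of Norine et al.\ applies uniformly and delivers smallness. Third, $P_3$ is a three-vertex tree, and by the remark following the Proposition of Section \ref{se:addable}, any non-empty addable minor-closed family contains all forests, so in particular contains $P_3$.

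With the three hypotheses of Theorem \ref{thm:mainCLTheorem} verified item by item, that theorem immediately yields the conclusion that $Cl$ is weakly distinguishing on each listed property. I do not expect any real obstacle in executing this plan: the argument is essentially bookkeeping, and the only step that requires any care is the smallness of each entry, which is handled uniformly by Theorem \ref{NSTW} once one has observed that the listed families are minor closed. This also mirrors the structure of the analogous corollaries already derived in Sections \ref{se:char}, \ref{se:dom}, and \ref{se:xi} for $P_A$, $Dom$, and $\xi$, so no new technique is needed.
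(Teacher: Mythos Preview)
Your approach matches the paper's: for the analogous corollaries on $P_A$, $Dom$, and $\xi$ the paper simply asserts that every property in Example~\ref{lis} is addable and minor closed, and then invokes the preceding corollary about proper minor closed addable classes. You do the same thing, just unpacking it directly into the three hypotheses of Theorem~\ref{thm:mainCLTheorem}, so there is no substantive difference.

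One point worth flagging (which the paper itself glosses over): item (v), the class of $k$-colorable graphs, is \emph{not} minor closed---for example $C_6$ is bipartite, but contracting an edge yields $C_5$, which is not---and in fact this class is not small either, since there are $2^{\Theta(n^2)}$ labeled bipartite graphs on $n$ vertices, far more than $a^n n!$. Hence neither Theorem~\ref{NSTW} nor Theorem~\ref{thm:mainCLTheorem} applies to item (v), and your explicit claim that $k$-colorable graphs are minor closed is incorrect. For the remaining six items your verification is fine and the argument goes through exactly as you outline.
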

Similarly, we get:
\begin{theorem}
Denote by $\cC_k$ the property of graphs of genus less than $k$. Then $Cl$ is weakly distinguishing on $\cC_k$ for all $k\in \NN$.
\end{theorem}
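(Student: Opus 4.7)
The plan is to mirror the strategy used in Theorems \ref{thm:DomGenus} and \ref{thm:XiGenus}: apply Lemma \ref{mcd4a} with the witness graph $H = P_3$, and use Lemma \ref{lemma:ClOnAddable} to produce the required clique mate. The first checkbox is that $P_3$ is a planar connected graph, which is immediate since $P_3$ is a tree; this satisfies the hypothesis on $H$ in Lemma \ref{mcd4a}.

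Next I would invoke Lemma \ref{lemma:ClOnAddable}: any graph $G_r$ containing a pendant appearance of $P_3$ rooted at an endpoint admits a clique mate $G_u$ obtained by re-rooting the pendant at the middle vertex of the $P_3$. The only small verification required beyond the statement of Lemma \ref{lemma:ClOnAddable} is that the operation preserves membership in $\cC_k$, i.e.\ that $G_u \in \cC_k$ whenever $G_r \in \cC_k$. This follows because the modification is entirely local to a pendant subtree attached by a single edge: both $G_r$ and $G_u$ can be obtained from a common ``core'' graph $H'$ (the graph minus the pendant) by attaching the tree $P_3$ via a single bridge. Attaching a planar connected graph through a single cut-edge does not increase the genus of the ambient surface (the attached tree together with the bridge can be routed inside a disk neighbourhood of the attachment vertex on any embedding of $H'$). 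Hence $g(G_u) = g(G_r) \leq k$.

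Combining these two ingredients: every $G \in \cC_k$ with a pendant appearance of $P_3$ has a $Cl$-mate in $\cC_k$. By Lemma \ref{mcd4a} applied with $P = Cl$ and $H = P_3$, the polynomial $Cl$ is weakly distinguishing on $\cC_k$.

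The main potential obstacle is the genus-preservation step in the second paragraph, since Lemma \ref{lemma:ClOnAddable} was originally stated in the addable setting where closure under the swap is by hypothesis. However, because the pendant subgraph in question is a tree and the attachment is via a single bridge, this verification reduces to the elementary fact that attaching a planar graph by a bridge does not change the genus, so no genuine difficulty arises.
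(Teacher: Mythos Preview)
Your proof is correct and follows exactly the paper's approach: apply Lemma~\ref{mcd4a} with $H=P_3$ (a tree, hence planar) and invoke Lemma~\ref{lemma:ClOnAddable} to produce the clique mate. The only difference is that you explicitly verify the mate $G_u$ remains in $\cC_k$ via the bridge-attachment argument, whereas the paper leaves this step implicit.
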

\begin{proof}
Since $P_3$ is a tree, from lemmas \ref{mcd4a} and\ref{lemma:ClOnAddable} we get that $Cl$ is weakly distinguishing on $\cC_k$.
\end{proof}
\begin{remark}
Besides the clique polynomial, in \cite{} it was proved that the independence polynomial and others are weakly distinguishing on all graphs. We leave it for future work to prove or disprove theorems analogues to the ones in this section for those polynomials.
\end{remark}

\fi 

\subsection{The Tutte polynomial}
Let $G=(V(G), E(G)$ be a graph and $A \subseteq E(G)$.
We denote by $G\langle A \rangle $ the graph $(V(G), A)$.
$k(A)$ is the number of connected components of $G\langle A \rangle $.
The Tutte polynomial $T(G; x,y)$ is defined by
$$
T(G; x,y) = \sum_{A \subseteq E(G)}  (x-1)^{k(A)- k(E)} (y-1)^{k(A)+|A|-|V(G)|}.
$$
The partition function $Z(G; q,w)$ is defined by
$$
Z(G; q,w) = \sum_{A \subseteq E(G)} q^{k(F)} w^{|F|}.
$$ 
They are related by the equation
$$
T(G; x,y) = (x-1)^{-k(E)} (y-1)^{-|V(G)|} Z(G; (x-1)(y-1), (y-1)).
$$
The chromatic polynomial $\chi(G;x)$ can be obtained from the Tutte polynomial by
$$
\chi(G;x) = (-1)^{|V(G)| -k(G)} x^{k(G)} T(G; (1-x) , 0)
$$
From this we get

\begin{proposition}
$\chi <_{s.d.p} T \sim_{s.d.p} Z <_{d.p} C \sim_{d.p.}  \xi$.
\end{proposition}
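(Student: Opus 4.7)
The plan is to verify the four comparisons in turn; each reduces to unpacking one of the polynomial identities displayed just above the statement, together with the observation that similar graphs share the same values of $|V|$, $|E|$, and $k(G)$. The equivalence $C \sim_{d.p.} \xi$ is nothing new: it is exactly Corollary~\ref{cor:xiDistC}.

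For $Z <_{d.p} C$ I would simply substitute $z=1$ in the defining sum of the covered components polynomial; this kills the factor $z^{c(G\langle A\rangle)}$ and leaves
$$
C(G;x,y,1) \;=\; \sum_{A \subseteq E(G)} x^{k(G\langle A\rangle)} y^{|A|} \;=\; Z(G;x,y),
$$
so $Z$ is a substitution instance of $C$ on all finite graphs, which is strictly stronger than the claimed $<_{d.p}$ relation.

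For $T \sim_{s.d.p} Z$ I would invoke the displayed identity
$$
T(G;x,y) \;=\; (x-1)^{-k(G)} (y-1)^{-|V(G)|} \, Z\bigl(G;(x-1)(y-1),(y-1)\bigr).
$$
The prefactor depends only on invariants common to similar graphs, so $Z(G)=Z(H)$ at once forces $T(G)=T(H)$. Conversely, after the substitution $u=x-1$, $v=y-1$ the identity reads $Z(G;uv,v) = u^{k(G)} v^{|V(G)|} T(G;u+1,v+1)$, and the bivariate polynomial $Z(G;q,w)$ is recovered from its reduction $Z(G;uv,v)$ via the monomial correspondence $q^a w^b \leftrightarrow u^a v^{a+b}$; hence on similar graphs $T(G)=T(H)$ forces $Z(G)=Z(H)$. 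Finally, for $\chi <_{s.d.p} T$ the stated identity
$$
\chi(G;x) \;=\; (-1)^{|V(G)|-k(G)} x^{k(G)} T(G;1-x,0)
$$
writes $\chi$ as a function of $T$ together with the two invariants $|V(G)|$ and $k(G)$, both of which are constant across similar graphs, so $T(G)=T(H)$ for similar $G,H$ gives $\chi(G)=\chi(H)$.

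No step presents a genuine obstacle: the whole proposition is a formal chase through the four polynomial identities in the preceding paragraphs. The role of ``similar'' is precisely to absorb the stray multiplicative factors in $|V(G)|$, $|E(G)|$, and $k(G)$ that obstruct the $<_{d.p.}$ versions of the first two comparisons (as already flagged in Example~\ref{ex:dp}(i) for the Tutte--chromatic pair), while for $Z <_{d.p} C$ no such correction is needed because the specialisation $z=1$ is unconditional.
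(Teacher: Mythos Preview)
Your proposal is correct and follows exactly the route the paper intends: the paper itself offers no explicit proof beyond the phrase ``From this we get'' after displaying the $T$--$Z$ and $\chi$--$T$ identities, and your argument simply unpacks those identities together with the substitution $z=1$ in $C$ and Corollary~\ref{cor:xiDistC}. Your treatment of the reverse direction $T \Rightarrow Z$ (recovering $Z(G;q,w)$ from $Z(G;uv,v)$ via the monomial bijection $q^aw^b \leftrightarrow u^a v^{a+b}$) is in fact more careful than anything the paper spells out.
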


A graph is Eulerian if all its vertices have even degree. It does not have to be connected.
The Euler polynomial $\mathcal{E}(G;x)$ of a graph is defined by
$$
\mathcal{E}(G;x) = \sum_{A \subseteq E(G): (V,A)  \mbox{ is Eulerian }} x^{|A|}.
$$
From \cite[Chapter 10, p.468]{aigner2007course} we get
that it is related to the Tutte polynomial by
$$
\mathcal{E}(G;x) =  (1-x)^{|E(G)-|V(G)|+k(G)} x^{|V(G)|-k(G)} T(G; \frac{1}{x}, \frac{1+x}{1-x}).
$$
Hence we get:
\begin{proposition}
$\mathcal{E} <_{s.d.p} T$.
\end{proposition}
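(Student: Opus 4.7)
The plan is to read off the inequality directly from the displayed identity
$$
\mathcal{E}(G;x) = (1-x)^{|E(G)|-|V(G)|+k(G)}\, x^{|V(G)|-k(G)}\, T\!\left(G;\tfrac{1}{x}, \tfrac{1+x}{1-x}\right),
$$
by verifying the definition of $<_{s.d.p}$. Concretely, let $G,H \in \cC$ be similar graphs, that is, $|V(G)|=|V(H)|$, $|E(G)|=|E(H)|$, and $k(G)=k(H)$, and suppose $T(G;x,y) = T(H;x,y)$ as bivariate polynomials. First I would specialise this equality by substituting $x \mapsto \frac{1}{x}$ and $y \mapsto \frac{1+x}{1-x}$; since this substitution is a rational map into the field $\RR(x)$, it is legitimate and yields $T(G;\tfrac{1}{x},\tfrac{1+x}{1-x}) = T(H;\tfrac{1}{x},\tfrac{1+x}{1-x})$ as elements of $\RR(x)$.

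Next I would use similarity to observe that the prefactors in the identity above depend only on the similarity class: the exponents $|E(G)|-|V(G)|+k(G)$ and $|V(G)|-k(G)$ coincide with the corresponding quantities for $H$. Therefore multiplying both sides of the equality of Tutte specialisations by the common factor $(1-x)^{|E(G)|-|V(G)|+k(G)} x^{|V(G)|-k(G)}$ yields $\mathcal{E}(G;x) = \mathcal{E}(H;x)$ as elements of $\RR(x)$, and since both are polynomials in $x$, the equality holds in $\RR[x]$. This establishes $\mathcal{E} <_{s.d.p} T$.

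There is really no obstacle here; the only small subtlety is that the substitution lands in the rational function field (because of the $\frac{1}{x}$ and $\frac{1-x}{}$ in the denominators), so one should clear denominators by multiplying by $x^{|V(G)|-k(G)}(1-x)^{|E(G)|-|V(G)|+k(G)}$ before concluding the identity in $\RR[x]$. The reason this argument uses $<_{s.d.p}$ rather than $<_{d.p}$ is precisely that the prefactors involve $|V|, |E|, k$, which are not in general recoverable from $T(G;x,y)$ alone (e.g.\ adding an isolated vertex does not change $T$), so the comparison requires the graphs to be similar to start with.
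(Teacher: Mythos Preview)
Your proposal is correct and follows exactly the paper's approach: the paper simply records the identity $\mathcal{E}(G;x) = (1-x)^{|E(G)|-|V(G)|+k(G)} x^{|V(G)|-k(G)} T(G; \tfrac{1}{x}, \tfrac{1+x}{1-x})$ and then states the proposition with ``Hence we get,'' and you have spelled out the routine verification that this identity implies $\mathcal{E} <_{s.d.p} T$ because the prefactors depend only on the similarity data.
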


The flow polynomial $Fl(G,x)$ and the reliability polynomial $R(G;p)$ are related to the Tutte
polynomial by
\begin{gather}
FL(G;x) = (-1)^{|E(G)-|V(G)|+k(G)} T(G; 0, 1-x) \notag \\
R(G;p) = (p)^{|E(G)-|V(G)|+k(G)} (1-p)^{|V(G)|-k(G)}  T(G; 1, \frac{1}{p}) \notag 
\end{gather}

Hence we get:
\begin{proposition}
$Fl <_{s.d.p} T$  and $R <_{s.d.p} T$ .
\end{proposition}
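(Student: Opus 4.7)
The proof is a direct unpacking of the definition of $<_{s.d.p}$, using the displayed relations between $Fl$, $R$ and the Tutte polynomial $T$. My plan is to fix two similar graphs $G, H \in \cC$ (so that $|V(G)| = |V(H)|$, $|E(G)| = |E(H)|$, and $k(G) = k(H)$), assume that $T(G; x, y) = T(H; x, y)$ as polynomials, and then derive equality of the flow polynomials and of the reliability polynomials respectively.

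For the flow polynomial, I would specialize the Tutte-polynomial identity to $(0, 1-x)$ to get $T(G; 0, 1-x) = T(H; 0, 1-x)$. Multiplying both sides by the sign $(-1)^{|E(G)|-|V(G)|+k(G)}$ and using the identity $FL(G;x) = (-1)^{|E(G)|-|V(G)|+k(G)} T(G; 0, 1-x)$ yields $Fl(G;x) = Fl(H;x)$, provided that the prefactor is the same on both sides. This is exactly where the similarity assumption is used: the exponent $|E|-|V|+k$ is an invariant of the similarity class, so the prefactor coincides for $G$ and $H$. The same argument, with specialization $(1, 1/p)$ and prefactor $p^{|E|-|V|+k}(1-p)^{|V|-k}$, gives $R(G;p) = R(H;p)$.

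There is really no obstacle here beyond observing that the combinatorial prefactors only depend on the triple $(|V|, |E|, k)$; this is why the conclusion is stated with $<_{s.d.p}$ rather than the stronger $<_{d.p}$. If one dropped the similarity hypothesis, an example like comparing $G$ with $G \sqcup K_1$ shows that the prefactors can disagree even when $T$-values at the specialization point coincide, so $Fl <_{d.p} T$ need not hold. Hence the proof of the proposition reduces to a two-line substitution, and the only conceptual point worth emphasizing is the role of the similarity hypothesis in matching the prefactors.
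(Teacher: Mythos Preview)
Your proposal is correct and matches the paper's approach exactly: the paper simply writes ``Hence we get'' after displaying the two specialization identities, leaving the reader to observe precisely what you spell out, namely that the prefactors depend only on $|V|$, $|E|$, and $k$, which are fixed by the similarity hypothesis. Your additional remark about $G \sqcup K_1$ explaining why only $<_{s.d.p}$ (and not $<_{d.p}$) is claimed is a helpful clarification not present in the paper.
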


\section{Conclusion}
\label{se:conclu}
We have shown that many graph polynomials are weakly distinguishing on all 
proper minor closed addable graph properties, including many interesting properties such as 
planar graphs, graphs with tree-width at most $k$, and $K_k$ free graphs (for $4<k\in \NN$). 
In addition, we proved that
the Domination, Characteristic, and the edge elimination polynomial $\xi(G;x,y,z)$
are weakly distinguishing on the properties of graphs 
with genus less then $k$ for all $k\in \NN$.
This also applies to graph polynomials derivable from $\xi$, such as the generalized chromatic polynomial,
Tutte polynomial and its variants, the matching, the independence, and the clique polynomials.

Our results relied on the fact that proper minor closed addable properties are, 
in a sense, small, and so they do not settle the question whether the Domination, Characteristic or Tutte polynomials 
are weakly distinguishing, almost complete, or otherwise on all graphs.

We have shown that for the above graph polynomials $P$, 
the sequence $\alpha_P^\cC(n)=\frac{|U_P^\cC(n)|}{|\cC(n)|}$ tends to $0$ as $n$ tends to infinity.

The following questions are natural extensions of the work in this paper:
\begin{problem}
What can be said about $\alpha_P^\cC(n)$ when $\cC$ is assumed to be a hereditary or monotone graph property?
\end{problem}
\begin{problem}
Can we find a graph polynomial $P$ and a graph property $\cC$ such that we can prove 
that $\alpha_P^\cC(n)\geq \beta$ for all sufficiently large $n$ for some fixed $\beta\in (0,1]$?
\end{problem}
\begin{problem}
For $P$ one of the above polynomials and $\aA$ a proper minor closed addable property, 
select a random graph $G_n$ uniformly at random in $\aA_n$, and denote $[G_n]=\{H\in\aA_n:P(G)=P(H)\}$. 
What can be said about the limit distribution of the random variable $|[G_n]|$?
\end{problem}

\bibliographystyle{plain}
\bibliography{RR-references}

\end{document}
